\newtheorem{thm}{Theorem}[section]
\newtheorem{theorem}{Theorem}
\newtheorem{lemma}[thm]{Lemma}
\newtheorem{proposition}[thm]{Proposition}
\newtheoremstyle{mydefinition}{}{}{\normalfont}{0pt}{\scshape}{.}{.5em}{}
\theoremstyle{mydefinition}
\newtheorem{defn}{Definition}
\newtheorem{definition}[defn]{Definition}
\newtheoremstyle{myremark}{}{}{\small\normalfont}{0pt}{\small\scshape}{.}{.5em}{}
\theoremstyle{myremark}
\newtheorem{remark}{Remark}
\numberwithin{equation}{section}
\def\beq{\begin{equation}}
\def\eeq{\end{equation}}
\def\diam{{\rm diam}}
\def\supp{{\rm supp}}
\def\Cov{{\rm Cov}}
\def\bJ{\mathbf{J}}
\def\bN{\mathbf{N}}
\def\bc{\mathbf{c}}
\def\bh{\mathbf{h}}
\def\bk{\mathbf{k}}
\def\bp{\mathbf{p}}
\def\br{\mathbf{r}}
\def\bs{\mathbf{s}}
\def\bgamma{{\boldsymbol{\gamma}}}
\def\btheta{{\boldsymbol{\theta}}}
\def\b1{{\boldsymbol{1}}}
\def\cA{\mathcal{A}}
\def\cE{\mathcal{E}}
\def\cF{\mathcal{F}}
\def\cG{\mathcal{G}}
\def\cH{\mathcal{H}}
\def\cM{\mathcal{M}}
\def\cO{\mathcal{O}}
\def\cR{\mathcal{R}}
\def\cT{\mathcal{T}}
\def\cU{\mathcal{U}}
\def\cW{\mathcal{W}}
\def\cZ{\mathcal{Z}}
\def\tc{\tilde{c}}
\def\eps{\varepsilon}
\def\eps{\varepsilon}
\def\pa{\partial}
\def\ds{\displaystyle}
\let\emptyset\varnothing
\begin{document}

\title{Decay of Correlations for Unbounded Observables}

\author{Fang Wang \thanks{School of Mathematical Sciences, Capital Normal University, Beijing, 100048, P.R. China.  Email: fangwang@cnu.edu.cn.}
\and Hong-Kun Zhang\thanks{Department of Mathematics and Statistics, University of Massachusetts,  Amherst MA 01003. Email: hongkun@math.umass.edu. }
\and Pengfei Zhang \thanks{Department of Mathematics, University of Oklahoma, Norman, OK 73019. Email: pengfei.zhang@ou.edu. }}

\date{}

\maketitle

\begin{abstract}
In this article, we study the decay rates of the correlation functions for
a hyperbolic system  $T: M \to M$ with singularities that
preserves a unique mixing SRB measure $\mu$.
We prove that, under some general assumptions, the correlations
$ \Cov(f,g)$ decay exponentially as $n\to \infty$ for each
pair of piecewise H\"older observables $f, g\in L^p(\mu)$ and for each $p>1$.
As an application, we prove that
the autocorrelations of the first return time functions decay exponentially
for the induced maps of various billiard systems, which include the semi-dispersing billiards on a rectangle,
billiards with cusps, and Bunimovich stadia (for the truncated first return time functions).
These estimates of the decay rates of autocorrelations of the first return time functions for the induced maps have an essential importance in the study of the statistical properties of nonuniformly hyperbolic systems (with singularities).
\vskip.5cm

\noindent \textbf{Keywords:} Decay of correlations, unbounded observables, infinite variance,
hyperbolic systems, singularity, coupling method, first return time.
\end{abstract}

\section{Introduction}

Let $M$ be a smooth, compact Riemannian surface (possibly with boundary),
$ S_{0} \subset M$ be a compact subset consisting of at most countably many connected smooth curves,
$T$ be a smooth diffeomorphism from  $M\backslash  S_{0}$ onto its image.
Let $\mu$ be a  $T$-invariant probability measure on  $M$. Given a pair of integrable
functions $f,g \in L^1(\mu)$, and $n\in \mathbb{N}$, the {\em correlation
function} $ \Cov_n(f,g)$ of $f$ and $g$ at time $n$ is defined as
\[ \Cov_n(f,g) = \int_{M} f\circ T^n\,\cdot g\, d\mu -\int_{M} f\, d\mu\cdot
\int_{M} g\, d\mu,\] provided $\int_{M} f\circ T^n\,\cdot g\, d\mu<\infty$.
Then the dynamical system $(T,\mu)$ is said to be {\it mixing} if the correlations
 \beq\label{Cto0}
  \Cov_n(f,g)\to 0,\  \text{ as }\ n\to\infty,
 \eeq
for all pairs of $f,g \in L^2(\mu)$. The mixing rate of the system
$(T,\mu)$ is characterized by the \emph{decay rate of correlations}, i.e.,
by the speed of convergence of (\ref{Cto0}) for ``regular enough'' and bounded functions.
\vspace{.2cm}

If $\pa M=\emptyset$ and $T$ is a  diffeomorphism on $M$,
then the term ``regular enough''
usually means  H\"older continuity, see \cite{B1}.
For systems with
singularities, even if we start with a smooth observable $f$ on $M$,
after $n$ iterations, for some $n\geq 1$,
$f\circ T^n$ may be smooth  only on each connected component of  
$M\setminus (S_0 \cup \cdots \cup T^{1-n}S_0)$. 
This gives us
a hint that a natural observable that is suitable for the study of
the decay rates of correlations for systems with singularities needs only to be
H\"{o}lder continuous on regions such that $T^n$ is smooth, for some $n\geq 1$. Recent work  on characterizing decay rates of correlations for pairs of piecewise
H\"{o}lder observables for hyperbolic systems can be found in
\cite{C99,C06,CD,D01,Y98,Y99,CZ09,DZ11,DZ13}, and the references therein. However, all these results only apply to bounded observables, and there are no general results about the rates of decay of correlations for $L^p(\mu)$ functions, with $p\in (1,\infty)$. \vspace{.2cm}

Many important observables for the dynamical system, being piecewise H\"{o}lder
continuous on $M$,  are indeed {\it unbounded},
and the decay rates of correlations for the unbounded observables
play a key role in the study of statistical properties of dynamical systems.  One important example is the free path function for the Sinai billiards with infinite horizon \cite{CM}, which is not even  an $L^2(\mu)$  function.  Another example is the index function
 \beq\label{returntime}
R(x):=\sum_{n=1}^{\infty}n\cdot \chi_{{D_n}}(x),
 \eeq
where $\{D_n, n\geq 1\}$ is the collection of components in $M\setminus (S_0\cup T^{-1}S_0)$.
This function  is unbounded whenever $M\setminus (S_0\cup T^{-1}S_0)$ has infinitely many of connected components.
Moreover,  $\ds R \notin L^2(\mu)$ if there exists $c>0$ such that $\mu(D_n)\geq c n^{-3}$ for all $n$ large enough.

More generally, in the study of  nonuniformly hyperbolic system with  singularities
$(\cM,\cF,\hat\mu)$, we usually apply the following {\it inducing scheme} to
obtain a {\it uniformly hyperbolic} map (with singularities) $(M, T, \mu)$, see \cite{Ma04,CZ05a}. One first locates a nice subset
$M\subset \cM$,
then defines the first return time function $R: M\to\mathbb{N}$  of the original system
  $\cF$ on $M$.   Let $\mu$ be the conditional measure on $M$ of the measure $\hat\mu$ on $\cM$, which is  preserved
by  the first return map $T(x):=\cF^{R(x)}(x)$.  In studying various probability limiting theorems such as the {Central Limit Theorem},
{Law of Large Deviations} and the {Almost Sure Invariance Principle},
etc., for $(\cM,\cF,\hat\mu)$ , it is usually sufficient to investigate the unbounded process $R\circ T^n$. Although there are some available estimations based on individual systems,  cf. \cite{BCD}, the question of
estimating decay rates of correlations for the first return time function is still  open for most
nonuniformly hyperbolic systems. \vspace{.2cm}

 Our goal in this work is to investigate the decay rates of correlations for
the unbounded observables on general systems with singularities. We prove the exponential decay
of correlations for {\it a large class} of {\it unbounded}, piecewise H\"{o}lder
continuous observables (under fairly general assumptions, see Section \ref{secSR}).
Indeed our
result is \textit{new} even for many smooth hyperbolic systems (by adding finitely many, connected, smooth fake singular curves).
Moreover, the investigations of the limiting theorems using
the results proved in this paper is currently under way.
\vspace{.2cm}

In proving the exponential decay of correlations, the existence of singularities
aggravates the analysis, and makes the standard approaches hard to apply. In
\cite{Y98}, Young developed a novel method, now known as \emph{Young's tower
construction}. She successfully applied this construction to Sina\v{\i}
billiards with finite horizon,  and proved the exponential mixing property of
these billiards. This method was later extended by Chernov (cf. \cite{C99})
to Sina\v{\i} billiards with infinite horizon. Another approach, the so-called
\emph{coupling method}, was also introduced by Young in \cite{Y99},
and later improved by Chernov and Dolgopyat in \cite{CD, D01} through the
tools of standard pairs. The coupling method is designed to geometrically
control the dependence between the past and the future. Recently, Demers
and Zhang were able to prove the  exponential decay of correlations for
rather general hyperbolic systems with singularities using the {\it spectral
gap method} for the dynamical transfer operators (cf. \cite{DZ11,DZ13,DZ14}).
All these methods
have been verified to be very efficient, and have led to many deep results.
\vspace{.2cm}

In this paper, we continue the investigation of the system $(M,T,\mu)$
studied in \cite{CZ09} under a similar setting.
We obtain the exponential decay  of correlations
for unbounded observables using the coupling scheme. One of the main tools in our proof is the innovated
Chernov's $\cZ$-function introduced   in \cite{CM, C99,C06} (see
Eq. (\ref{cZ}) for definition). We use a dedicated analysis on
properties of this $\cZ$-function, which is closely related to the {\it Growth
Lemma}. This enables us to overcome the difficulty in obtaining the decay
rates on general piecewise H\"older continuous observables in $L^p(\mu)$,
for $p>1$. We also construct a hyperbolic set $\cR^*$ with product
structure, such that the dynamical system $(M,T)$ can be identified as a
suspension based on $\cR^*$. The coupling method is our main approach, which has been extensively
used in the recent studies of systems with singularities  (cf. \cite{CD,CM,
D01, D04b, CZ09, Y99}).
\vspace{.2cm}

One of our main achievements in this paper is the discovery of
the condition (\ref{bound})  for the exponential decay
of   unbounded functions. We also provide \textbf{(H6)} in \S \ref{assumptions} , which is a sufficient condition for (\ref{bound}) applying to billiard systems. 
It provides a simple criterion and is easy to be verified for a broad class of nonuniformly hyperbolic systems. Future researchers in this area can start their work on the limiting theory and other statistical properties from this criterion.  As straightforward examples, we show that $\Cov_n(R,R)$ decay exponentially for several classical billiard systems, such as billiards with
cusps, semi-dispersing billiards on a rectangle, billiards with
flat point,  Bunimovich flower billiards, Bunimovich stadia and the Skewed stadia. 
\vspace{.2cm}

This paper is organized in the following way. In \S\ref{secSR}, we list the
standard hypotheses \textbf{(H1)--(H5)} {and a new hypothesis \textbf{(H6)}  }
for the system $(M,T,\mu)$, and then
state our main results on the decay rates of correlations of unbounded
observables. In \S \ref{sec:4}, we introduce the concepts of standard pairs
and standard families. The growth lemma is also introduced in this section,
which plays an important role when we analyze the decay rates of the
correlations for unbounded observables. In \S \ref{magnet} we construct a
hyperbolic magnet, which serves as the base of the coupling scheme.
The coupling lemma is also introduced in this section.
In \S 5, we first establish an
equidistribution result for unbounded observables, and then give the proof of
exponential decay property for unbounded observables. In \S \ref{infinitevariance}, the decay rates of autocorrelations for the first
return time functions are studied. We establish the criterion and discuss several billiard models as examples, such as
semi-dispersing billiards, Bunimovich billiards, and billiards with flat
points, which all have special
importance in the modern theory of billiards. \\

\noindent NOTATION: Throughout the paper we will use the following
convention: positive global constants whose exact values are unimportant,
will be denoted by $c$, $c_1, c_2$, $\cdots$ or $C$, $C_1$, $C_2$, $\cdots$.	
These letters may be assigned to different values in different equations
throughout the paper. On the other hand, we use the symbols: $\bs_0$, $\bs_1$,   $b$, $c_{A}$, $C_{A}$, $C_{r,\bp}$, $\btheta$, $\bgamma_0$ and $\bgamma_1$, $\cdots$  to denote those constants whose values are fixed in the whole paper. Most of these constants are defined in \S\ \ref{assumptions}, and $C_{\br}$ are defined in \S\ \ref{stan_pair}.

\section{Main results}
\label{secSR}

\subsection{List of assumptions}\label{assumptions}

Let $M$ be a smooth, compact Riemannian surface (possibly with boundary),
$d(\cdot\,,\cdot)$ be the geodesic distance between two points on $M$.
Given a compact smooth curve $W\subset M$,
we let $|W|$ be the length of $W$, and $m_W$ be the conditional Lebesgue measure on $W$.
Let $ S_{0} \subset M$ be a compact subset consisting of at most countably many connected smooth curves,
$T$ be a smooth diffeomorphism from  $M\backslash  S_{0}$ onto its image.
Let $ S_{1}= S_0\cup T^{-1} S_0$, which is called the {\it singularity set} of $T$.
In the following we list and briefly explain the assumptions for our main results.

\bigskip

\noindent \textbf{(H1)} Hyperbolicity of $T$.
There exist two families of continuous cones $C^u_x$ (unstable) and $C^s_x$
(stable)  in the tangent spaces $\cT_x M$ for all $x\in M\setminus S_1$ with the following
properties:
\begin{itemize}
\item[(i)] The angle between $C^u_x$ and $C^s_x$ is uniformly bounded away
from zero;
\vspace{.1cm}
\item[(ii)] $D_x T (C^u_x)\subset C^u_{ T x}$ and $D_x T (C^s_x)\supset
    C^s_{ T x}$  (whenever $D_xT$ exists);
\vspace{.1cm}
\item[(iii)] There exists a constant $\Lambda>1$, such that
 \[
\|D_x T(v)\|\geq \Lambda \|v\|,\ \text{ for all }  v\in C_x^u;\quad
\|D_xT^{-1}(v)\|\geq \Lambda \|v\|,\  \text{ for all }\ v\in C_x^s.
\]
\end{itemize}
\vspace{-.2cm}
\begin{definition}
A smooth curve $W\subset M$ is said to be an \emph{unstable curve},
if the tangent line $T_x W$ belongs to the unstable cone
$C^u_x$ at every point $x \in W$. An unstable curve $W$
is called an \emph{unstable manifold}
 If $T^{-n}(W)$ is an unstable curve for each $n \geq 0$. We use $W^u$ to denote an unstable manifold, and use $\mathcal{W}^u$ to denote the collection of all unstable manifolds.
\end{definition}
For convenience, we denote $ S_{-1}= S_0\cup T S_0$.
Stable curves and stable manifolds are defined similarly by considering $T^{-1}$.

\bigskip

{
\noindent  \textbf{(H2)}  Singularities and smoothness.
Curves in $ S_{\pm 1}$ terminate on each other, i.e.
the endpoints of each curve $W\in  S_{\pm 1}$ must lie on other curves in $ S_{\pm 1}$. Moreover,
\begin{enumerate}
\item[(i)]   curves in $ S_0$ are uniformly transverse to both stable and
    unstable cones, and smooth curves in $T^{-1} S_0$ ($T S_0$, respectively)
    are stable (unstable, respectively) curves.

\item[(ii)] the numbers of smooth components of $\partial D$,
where $D$ runs over the connected components
of $M\backslash S_1$, are     uniformly bounded.

\item[(iii)]  there exists  $\bgamma_1\in (0,1]$
    such that  the map $T$ is a $C^{1+\bgamma_1}$ diffeomorphism on each
component $D$ and  can be extended continuously\footnote{Note that
the extensions of $T$ to the closures of two neighboring components $D$
and $D'$ could be different.}
to the closure $\overline{D}$.

\item[(iv)] there exist constants $\bs_1\in (0,1)$ and $C_1>0$, such that
    for any $x\in M\setminus  S_{\pm 1}$,
     \beq\label{upper} \|D_x T \|\leq
    C_1\cdot d(x,  S_{\pm 1})^{-\bs_1}. \eeq
\end{enumerate}

Note that for Sinai billiards, the last condition holds for $\bs_1=\frac{1}{2}$,
see \cite{CM07, KS86}.
}

For any $n\geq 1$, let $\ds S_{n}=\cup_{m=0}^{n-1} T^{-m} S_{1}$
be the singularity set of $T^n$,
and $\ds S_{\infty}=\cup_{m\geq 0}  S_{m}$.  Similarly, we define
$ S_{-n}$ and $ S_{-\infty}$.
Note that for every
stable/unstable manifold $W$, the end points of $W$ are on the singular
curves $ S_{\pm \infty}$. Thus the assumption \textbf{(H2)}  implies that the angles
between both stable and
unstable manifolds with the singular curves in
$ S_{\pm 1}$ is greater than some constant $\alpha_0>0$ at their intersection points.\vspace{.2cm}

\begin{definition}
Given two points $x, y \in M$. Let $\bs_+(x,y)$ be the smallest integer
$n\geq 0$ such that $x$ and $y$ belong to distinct elements of
$M\setminus S_n$, which is called  the forward
\emph{separation time} of $x, y$.
The backward separation time $\bs_-(x,y)$ is defined by reversing the direction.
\end{definition}
\vskip.2cm

\noindent  \textbf{(H3)}  Regularity of  stable/unstable curves.
Any unstable (and stable) curve $W$ is regular in the following sense:
  \begin{itemize}
\item[(i)] \textit{Bounded curvature.} There exists a constant
$\bk_0>0$, such that  the curvature of $W$ is bounded
from above by $\bk_0$.

 \item[(ii)] \textit{Bounded distortion.} There exist two constants
     $C_{\bJ}>1$ and $\bgamma_0\in(0,1]$, such that for each unstable
     curve $W\subset M$, and each pair of points $x,\ y\in W$,
 \beq
	  \left|\,\ln J_W T^{-1}(x)-\ln J_W T^{-1}(y)\right|
		\leq	C_{\bJ}\, d(x,y)^{\bgamma_0}\label{distor0},
 \eeq
 where $J_W T^{-1}(x)$ is  the Jacobian
of $T^{-1}$ at $x\in W$ with respect to the Lebesgue measure on the unstable manifold. (the subscript $\bJ$ in the constant $C_{\bJ}$ stands for Jacobian.)

\item[(iii)]\textit{Absolute continuity.} For each pair of regular
unstable curves $W^1$ and $ W^2$, which are close enough, we define
$$
   W^i_{\ast} := \{ x\in W^i \colon
   W^s( x) \cap W^{3-i} \neq \emptyset\},
$$
for $i=1,2$. The stable holonomy map $\bh:W^1_{\ast}\to W^2_{\ast}$
along stable manifolds is absolutely continuous with uniformly bounded
Jacobian $J_{W^1_*}\bh$. Furthermore, there exists  $ \btheta\in(0,1)$
such that for each $x,\ y\in W^1_*$,
 \beq\label{cjchb}
 |\ln J_{W^1_*}\bh(y)-\ln J_{W^1_*}\bh(x)|\leq
 C_{\bJ}\cdot\btheta^{\bs_+(x,y)}.
 \eeq
\end{itemize}

\vskip.4cm

A $T$-invariant probability Borel measure $\mu$ is said to be an {\it SRB} measure  (short for
Sina\v{\i}-Ruelle-Bowen), if its basin  is  of positive Lebesgue
measure, and the
conditional measures of $\mu$ on unstable manifolds are absolutely
continuous with respect to the leaf volume (see for example \cite{Y02,D04b}).
It provides a way to visualize the equilibrium state for
physicists, by averaging the atomic measures along typical trajectories. The
{\it existence} and \emph{finiteness} of mixing SRB measures for hyperbolic systems with
singularities have been studied by Chernov, Dolgopyat, Pesin,
Sina\v{\i}, Young and many others, see \cite{ABV,CD, P92, Y98,DZ11, DZ13}.\\

\noindent \textbf{(H4)}  SRB measure.
The map $T$  preserves a unique, mixing SRB measure $\mu$.
Moreover, there exists $\bs_0\in (0,1]$, such that
 \beq\label{alpha}
  \mu(\{x\in M\,:\, d(x,  S_{\pm 1})\leq
    \eps\})\leq C_0\cdot \eps^{\bs_0}
  \eeq
for every $\eps>0$.

Since $ S_1$ may consists of countably many singular curves,
some of the stable/unstable curves may be
quite short. The following condition ensures that, on average, forward iterations of these unstable curves
are generally long enough.
See Lemma \ref{global} and its following remark for a
quantitative estimate.
\vspace{.2cm}

\noindent  \textbf{(H5)} One-step expansion.
Let $\bs_0\in (0,1]$ be given as in (\ref{alpha}). Then
\begin{align}\label{onestep}
\liminf_{\delta\to 0}\ \sup_{W\colon
  |W|<\delta}\sum_{V_{\alpha}}
 \left(\frac{|W|}{|V_{\alpha}|}\right)^{{\bs_0}}\cdot
 \frac{|T^{-1}V_{\alpha}|}{|W|}<1,
 \end{align}
where the supremum is taken over all unstable curves $W$ of length $|W|< \delta$,
and the summation is over the set of connected components of $TW\backslash S_{-1}$.

\vspace{.2cm}

\begin{remark}
Note that we may assume that the lengths of
unstable/stable curves $W\subset M$ are uniformly bounded by a small constant
$\bc_M\in(0,1)$. That is, $|W|<\bc_M$ for any unstable/stable manifold $W$. One can
always guarantee this by adding a finite number of grid lines to $ S_{\pm 1}$ satisfying
\textbf{(H2)}.
\end{remark}

\vskip.1cm

\noindent  \textbf{(H6)}
There exists an enumeration  $\{D_n: n\ge 1\}$  of the set of connected components of
$M\backslash S_{1}$ such that the following holds:
\begin{enumerate}
\item[(i)] there exist $\bs>0$ and $C_0>0$ such that
  \beq\label{alpha1}
\mu(D_n)\leq C_0 n^{-2-\bs},\, \, \text{ for every }\, n\geq 1.
  \eeq

\item[(ii)] There exist two constants $C_A>0$ and
a  measurable partition of  $D_n$ into unstable curves $\{W_{\alpha}: \alpha\in \cA_n\}$
for each $n\ge 1$  that
satisfies  
\begin{align}\label{part}
|W_{\alpha}|\leq C_A n^{-\bs-b},\,\,\,|TW_{\alpha}|\leq C_A n^{-d}\,\,\text{ and }\,\,\,\,\,\, 
 \mu(D_n)/|TW_{\alpha}|\leq C_A n^{-\bs-b},\,\,\,\,\, \forall \alpha\in \cA_n,
\end{align}
for some positive constants  $d\in[0,1]$ and $b\geq 1$, such that 
\beq\label{assumpb} 
  b+\bs  >  \tfrac{1}{\bs_0} ;\,\,\,\,\,\text{ and }     \bs\geq \bs_0(2-b) \text{ if $b\in [1,2)$}.
\eeq
\end{enumerate}

A natural enumeration of $M\backslash S_1$ exists if $T$ is the first return map of some nonuniformly
hyperbolic system (with singularity), see \S \ref{sub.app}.
Moreover, $T^{-1}$ is well defined on $T D_n$, and can be extended by
continuity to the closure $\overline{TD_n}$ for each $n\ge 1$.
Then $\{TW_{\alpha}, \alpha\in \cA_n\}$ is a partition of $TD_n$, $n\ge 1$.

Note that {\textbf{(H6)}} is a new assumption, which is not needed in our general Theorem 1. Indeed the condition (\ref{part}) is only used in prove (\ref{cZgbarg}); and the condition (\ref{assumpb}) is only  needed in  Proposition \ref{coro-main3}. These are  used in the proof of Theorem \ref{main1}.   
We will verify the condition   {\textbf{(H6)}} for all billiards considered in this paper
 in \S \ref{pro.typB}.

\subsection{Main results}

Now we consider any $\gamma\in[\bgamma_0,1]$, where $\bgamma_0$ is the constant in (\ref{distor0}) in the assumption of bounded distortion.
For any $p\in(1,\infty]$,  let $\cH_p(\gamma)$ be
the set of all real-valued functions $f : M\to\mathbb{R}$, which is
H\"older continuous on each component of $D_n\subset M\setminus S_1$, $n\ge 1$, with
\begin{align}
\|f\|_{C^{\gamma}}:= &  \sup_{n\geq 1}  \: \sup_{ x, y\in
D_n}\frac{|f(x)-f(y)|}{d(x,y)^{\gamma}}<\infty; \\
\|f\|_{p,u}:= & \sup_{\alpha\in \cA^u} \: \sup_{x\in W_{\alpha}} |f(x)| |W_{\alpha}|^{\bs_0/p}< \infty.\label{fdefp}
\end{align}
One can check that
the $\|f\|_{p,u}<\infty$   if and only if 
$\ds \|f\|_p := \Big(\int |f(x)|^p d\mu(x) \Big)^{1/p}<\infty$, by using the Holder continuity of $f$ on each unstable curve $W_{\alpha}$ and the regularity of the density of conditional measures of the SRB measure $\mu$. 
We choose $\|f\|_{p,u}$ over $\|f\|_{p}$ since it is slightly more convenient for our discussion.
Moreover, it is easy to see that it induces a norm for functions $f\in \cH_p(\gamma)$ ,where
$$\|f\|_{p,\gamma}:=\|f\|_{p,u}+\|f\|_{C^{\gamma}}.$$

We give a sufficient condition under which the
correlation decays exponentially for $f,g\in \cH_{p}(\gamma)$.
Let $C_{\bp}$ and $C_{\bc}$ be two constants given 
by \eqref{Cp1} and  \eqref{ctail1}, respectively, and  
$C(\gamma, s)=\frac{1}{1-\Lambda^{-\gamma}} + \frac{1}{1-\Lambda^{-s}}$,
where $s$ is given by Lemma \ref{coupling1}.

\begin{theorem}\label{main2}
Assume \textbf{(H1)-(H5)} hold. Let $p\in (1,\infty]$,   and
 $f, g\in \cH_{p}(\gamma)$. Suppose that there exists $\eps>0$ such that
\begin{equation}\label{bound}
C_{g,\eps}(f) := \sup_{n\ge 1} \int_M |f\circ T^{n}|^{1+\eps}\cdot |g| \,d\mu< \infty.
\end{equation}
Then there exists a  constant $\vartheta\in(0,1)$  depending on $\eps, \gamma, p$ such that
 \begin{align}
 | \Cov_n(f,g)| \leq   
 [ 2C_{g,\eps}(f) + 4(2C(\gamma, s)C_{\bc}+c_M+C_{\bp}) \|f\|_{p,\gamma}\|g\|_{p.\gamma} ]\vartheta^n,
 \end{align}
 for any $n\ge 1$.
\end{theorem}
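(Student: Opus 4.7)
My plan is to reduce the unbounded case to the bounded one by truncating $f$ at a level $L=L(n)$ that grows exponentially in $n$, applying the coupling scheme of Section~\ref{magnet} to the bounded truncate, and using the moment assumption~\eqref{bound} to dispose of the tail. Set $f_L:=\mathrm{sgn}(f)\cdot\min(|f|,L)$ and decompose
\[
\Cov_n(f,g)=\Cov_n(f_L,g)+\Cov_n(f-f_L,g).
\]
For the first term I would apply the coupling lemma (Lemma~\ref{coupling1}) to two standard families: one representing the SRB measure $\mu$, the other obtained from the (signed) measure $g\,d\mu/\!\int g\,d\mu$ after a sign decomposition $g=g^+-g^-$. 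For the second term I would invoke~\eqref{bound} together with $f\in L^{1+\eps}(\mu)$, which holds for any $\eps\in(0,p-1]$ since $f\in L^p(\mu)\subset L^{1+\eps}(\mu)$.

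\textbf{Bounded piece via coupling.} Rewriting
\[
\Cov_n(f_L,g)=\int g\,d\mu\,\cdot\!\int\bigl(f_L(y_1)-f_L(y_2)\bigr)\,d\pi_n(y_1,y_2),
\]
where $\pi_n$ couples the two push-forwards under $T^n$, I would split the inner integral by the first coupling time. On orbits already coupled by time $n-k$ the pair $(y_1,y_2)$ lies on a common stable manifold of length at most $\Lambda^{-k}$, so the piecewise Hölder continuity of $f_L$ (which inherits the Hölder modulus of $f$) gives $|f_L(y_1)-f_L(y_2)|\le\|f\|_{C^\gamma}\Lambda^{-\gamma k}$; on orbits still uncoupled at time $n$ one only uses $|f_L|\le L$. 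Summing the geometric series in $k$ against the uncoupled mass (which decays like $s^n$ by Lemma~\ref{coupling1}), and picking up the regularity factor $\|g\|_{p,\gamma}$ from the standard family representing $g\,d\mu$, yields a bound of the form
\[
|\Cov_n(f_L,g)|\le C_1\,\|g\|_{p,\gamma}\,\bigl(\|f\|_{C^\gamma}+L\bigr)\,\vartheta_0^n,
\]
with $\vartheta_0:=\max(s,\Lambda^{-\gamma})$ and $C_1$ absorbing the constants $C_{\bc},C_{\bp},C(\gamma,s),c_M$ appearing in the statement.

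\textbf{Tail and balancing.} The tail is controlled by Markov's inequality $|f-f_L|\le L^{-\eps}|f|^{1+\eps}$, which yields
\[
\int|f\circ T^n-f_L\circ T^n|\cdot|g|\,d\mu\le L^{-\eps}\,C_{g,\eps}(f),
\]
while a parallel estimate (using $T$-invariance of $\mu$ to move $f-f_L$ back to time $n$, and $f\in L^{1+\eps}(\mu)$) controls the correction to the product of means by $L^{-\eps}$ times a polynomial expression in $\|f\|_{p,\gamma}$ and $\|g\|_{p,\gamma}$. After rescaling $f,g$ so that $\|f\|_{p,\gamma},\|g\|_{p,\gamma}\le 1$, the choice $L:=\vartheta_0^{-n/(1+\eps)}$ balances $L\,\vartheta_0^n$ against $L^{-\eps}$ and produces exponential decay at rate $\vartheta:=\vartheta_0^{\eps/(1+\eps)}\in(0,1)$; unwinding the normalization and collecting the coefficients gives the form stated in the theorem. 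The main obstacle lies in the coupling step: the classical coupling argument is written for uniformly bounded Hölder observables, so I must verify that the rate $s$ of Lemma~\ref{coupling1} is genuinely independent of the truncation level $L$, i.e., that standard pairs supported on very short unstable curves on which $f_L$ saturates at $L$ do not deteriorate the equidistribution. This is precisely the role of the equidistribution result for unbounded observables proved in Section~5, whose proof in turn relies on careful control of Chernov's $\mathcal{Z}$-function through the growth lemma introduced in Section~\ref{sec:4}.
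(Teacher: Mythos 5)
Your truncation-plus-balancing scheme for the observable $f$ is a legitimate alternative to what the paper does (the paper never truncates; in Lemma \ref{decayinf} it bounds the uncoupled contribution $\int f\circ T^n\,d\bar\nu^1_n$ directly by H\"older's inequality with exponents $1+\eps$ and $(1+\eps)/\eps$ against \eqref{bound}, which yields the same rate $\Lambda^{-sn\eps/(1+\eps)}$ that your choice $L=\vartheta_0^{-n/(1+\eps)}$ produces). But there is a genuine gap on the $g$-side, and it is the harder half of the theorem. You propose to feed ``the standard family obtained from $g\,d\mu/\!\int g\,d\mu$'' into Lemma \ref{coupling1}. That lemma requires an $r$-proper family, and for an unbounded $g\in\cH_p(\gamma)$ the object $(\cW^u, g\,d\mu)$ is in general neither proper nor even a standard family: the conditional densities $g/\mu_\alpha(g)$ on unstable fibers need not satisfy the log-H\"older condition \eqref{lnholder0} (e.g.\ where $g^{\pm}$ vanishes on part of a fiber), and $\cZ_r$ of this family is controlled only by $C_{\bp}(\|g\|_{p,u}+\|g\|_\gamma)$ (Lemma \ref{rpropergg}), which exceeds the properness threshold $C_{\bp}$. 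Your closing paragraph worries instead about whether the coupling rate $s$ depends on the truncation level $L$ --- it does not, since the coupling acts on measures and $f_L$ plays no role in it --- so the real obstacle is misattributed.

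The paper's proof of Theorem \ref{main2} is built precisely to circumvent this: it uses $T$-invariance to replace $g$ by $g\circ T^{-\lfloor n/2\rfloor}$, replaces that by its conditional expectation $\bar g_{\lfloor n/2\rfloor}$ on unstable fibers (constant on each fiber, hence trivially an admissible density; the error is $c_M\|g\|_{C^\gamma}\Lambda^{-\lfloor n/2\rfloor\gamma}$ by backward contraction), and then splits the fibers into long ones (for which $\lfloor n/2\rfloor$ iterations restore properness, Lemma \ref{properagain}(3), so Lemma \ref{decayinf} applies) and short ones (whose total $\lambda_g$-mass is exponentially small by \eqref{smallalpha}, and whose contribution is killed by H\"older's inequality against \eqref{bound}, Eq.~\eqref{Ng12}). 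Some version of this weight-regularization and long/short dichotomy --- or an equivalent waiting-time argument making $T^N\cG_{g^\pm}$ proper --- must be supplied before your coupling step is licensed; as written, the first display of your ``bounded piece'' does not yet exist.
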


The assumption \eqref{bound} is rather crucial here.
The key point is that $g$ is in the same space as $f\in\cH_p(\gamma)$, instead of the dual space $\cH_{q}(\gamma)$, where $\frac{1}{p} + \frac{1}{q}=1$.
This is important for us to tackle the case $g=f$ in
the study of the decay rates of autocorrelations.\\

Next we define a smaller class of observables. 
For each $p>1$, any  $\kappa\in [0, p]$,
let $\cH_{\kappa,p}(\gamma)$ be the collection of piecewise H\"older
functions $f\in \cH_p(\gamma)$,
such that
\beq\label{Hkappap}
K_f:=\sup_{n\ge 1} \sup_{x\in D_n}|f(x)|\cdot n^{-\kappa/p} < \infty.
\eeq
If $p=\infty$, set $K_f=\|f\|_{\infty}$ for any  $f\in \cH_{\infty}(\gamma)$.

One can check easily that under assumption (\textbf{H6}), $K_f<\infty$ implies that 
\begin{align*}
\|f\|_{p,u} &= \sup_{\alpha\in \cA^u} \sup_{x\in W_{\alpha}} |f(x)||W_{\alpha}|^{\bs_0/p}\
\leq\sup_{n\geq 1} \sup_{x\in TD_n} |f(x)| |TW_{\alpha}|^{\bs_0/p} \\
& \leq C_A^{\bs_0/p}\sup_{n\geq 1} \sup_{x\in TD_n} |f(x)|n^{-d\cdot \bs_0/p}
=C_A^{\bs_0/p}\ K_f,
\end{align*}
where we used the fact that $d\bs_0\leq 1<p$, by (\textbf{H6}). Thus $\cH_{\kappa,p}(\gamma)\subset \cH_p(\gamma)$. 

Our next theorem shows that assumption \eqref{bound}  holds
for the class of observables in $\cH_{\kappa,p}(\gamma)$. 
Recall $\log^+_{a}b =\max\{0, \log_a b\}$.
\vspace{.2cm}

\begin{theorem}\label{main1}
Assume \textbf{(H1)--(H6)} hold. Let $p>1$,
$0<\kappa\leq p$, and $f,g\in \cH_{\kappa,p}(\gamma)$.
Then  there exist a  constant $\vartheta\in(0,1)$ depending on $ p$ and $\gamma$, such that \begin{align}\label{inequ.f.g}
 |\Cov_n(f,g)| \leq  
 [ 2C_{g,\eps_p}(f) + 4(2C(\gamma, s)C_{\bc}+c_M+C_{\bp}) \|f\|_{p,\gamma}\|g\|_{p.\gamma} ] \vartheta^{n},
\end{align}
for any $n\ge 1$,
where the constant $C_{g,\eps_p}(f)$ is defined as in (\ref{regularcase}).

In particular, there exists a uniform constant $C>0$ such that for any $f,g\in \cH_{\infty}(\gamma)$, we have
\begin{align}
|\int_{M}f\circ T^n\cdot g\,d\mu-\mu(f)\cdot\mu(g)|\leq C\, \|f\|_{\infty,\gamma}\|g\|_{\infty,\gamma}\vartheta^n
\end{align}
for any $n\geq 1$.
\end{theorem}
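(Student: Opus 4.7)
The plan is to deduce Theorem \ref{main1} from Theorem \ref{main2}. Since Theorem \ref{main2} already provides an exponential decay bound of the stated form \emph{once} the integrability condition (\ref{bound}) is verified, the only real work is to exhibit some $\eps_p>0$ (depending on $p$, $\kappa$ and the parameters fixed by \textbf{(H6)}) such that
\[
C_{g,\eps_p}(f)=\sup_{n\ge 1}\int_M|f\circ T^n|^{1+\eps_p}\,|g|\,d\mu<\infty
\]
for every $f,g\in\cH_{\kappa,p}(\gamma)$. The second (bounded) inequality in the theorem is then an immediate corollary: for $f,g\in\cH_{\infty}(\gamma)$ condition (\ref{bound}) holds trivially with $C_{g,\eps}(f)\le\|f\|_\infty^{1+\eps}\|g\|_\infty$, while $\|f\|_{p,u}\le \bc_M^{\bs_0/p}\|f\|_\infty$ by the uniform bound on the length of stable/unstable curves, so $\|\cdot\|_{p,\gamma}$ is controlled by a universal multiple of $\|\cdot\|_{\infty,\gamma}$.

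To secure the uniform bound on $C_{g,\eps_p}(f)$ I would combine two ingredients. First, the pointwise growth estimate $|f(x)|\le K_f k^{\kappa/p}$ on $D_k$ (and likewise for $g$) from the definition of $\cH_{\kappa,p}(\gamma)$. Second, the tail bound $\mu(D_k)\le C_0 k^{-2-\bs}$ from \textbf{(H6)}(i). Decomposing the integral over the components $\{D_k\}$, using $T$-invariance of $\mu$ to replace $\|f\circ T^n\|_r$ by $\|f\|_r$, and applying H\"older's inequality on each piece, the estimate reduces to convergence of a single tail series of the shape $\sum_k k^{\kappa/p-(2+\bs)/q'}$, where the conjugate exponent $q=q'/(q'-1)$ is picked just below the integrability threshold $p(1+\bs)/\kappa$ and $\eps_p$ is then taken small enough to preserve this. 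This is the computation that will be recorded as formula (\ref{regularcase}) for $C_{g,\eps_p}(f)$.

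The main obstacle I anticipate is the regime in which $\kappa/p$ is close to $1$ while the decay exponent $\bs$ is small, so that the crude H\"older estimate above does not close on its own. In that case the bound on $\|f\|_r$ alone is too weak and one must use the finer geometric information in \textbf{(H6)}(ii): the unstable-curve partition $\{W_\alpha\}_{\alpha\in\cA_n}$ of $D_n$ subject to the length bounds (\ref{part}), together with the compatibility inequality (\ref{assumpb}) relating $b$, $\bs$ and $\bs_0$. These are precisely the ingredients that feed into Proposition \ref{coro-main3}, which the authors invoke at this point in the argument and which is designed to match the pointwise growth of $|f|$ on $D_n$ against the expansion of $T^{-1}$ from $TW_\alpha$ back to $W_\alpha$, closing the remaining sub-cases $0<\kappa\le p$.

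With $C_{g,\eps_p}(f)<\infty$ in hand, Theorem \ref{main2} applied with $\eps=\eps_p$ delivers the bound (\ref{inequ.f.g}) with a constant $\vartheta\in(0,1)$ depending only on $p$ and $\gamma$ (through the fixed system parameters $\bs_0,\bs,b,d$). The final $\cH_\infty(\gamma)$ statement then drops out by plugging the trivial estimates for $C_{g,\eps_p}(f)$ and $\|\cdot\|_{p,\gamma}$ noted in the first paragraph into the general inequality and absorbing all prefactors into one universal constant.
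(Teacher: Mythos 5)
Your overall architecture is exactly the paper's: Theorem \ref{main1} is deduced from Theorem \ref{main2} by verifying the integrability condition (\ref{bound}) with a specific $\eps_p$, and that verification is precisely Proposition \ref{coro-main3}; the bounded case then follows trivially. One correction of emphasis, though: the ``first ingredient'' you lead with --- H\"older on $\int|f\circ T^n|^{1+\eps}|g|\,d\mu$ plus $T$-invariance and the tail bound $\mu(D_k)\le C_0k^{-2-\bs}$ --- is not merely insufficient in a borderline regime; it fails in essentially every case the theorem is aimed at. For $\kappa=p$ (the case of the return time $R$) one needs conjugate exponents $a,a'$ with $(1+\eps)a<1+\bs$ and $a'<1+\bs$, which forces $\bs>1$, whereas all the billiard examples have $\bs=1$. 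So formula (\ref{regularcase}) is not ``recorded'' from that computation: it is the output of the entirely different argument in Proposition \ref{coro-main3}, which controls $T^n_*\nu(\cup_{k\ge l}D_k)$ via the growth lemma (Eq.~(\ref{standard})) applied to the $r$-standard family $T\cG_g$, using the unstable-curve partition of \textbf{(H6)}(ii) to convert ``$x\in D_k$ for $k\ge l$'' into ``$r^u(Tx)\lesssim l^{-(b+\bs)}$'' and hence into a tail $l^{-(b+\bs)r}$ that beats the growth $l^{(1+\eps)\kappa/p}$ under (\ref{assumpb}). Since you correctly identify that proposition as the closing step and it is a separately stated result of the paper, your proof is complete modulo it, and coincides with the paper's; just be aware that the dynamical (growth-lemma) estimate is the whole proof of (\ref{bound}), not a patch for residual sub-cases.
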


\begin{remark}
The advantage of our result is that $f$ and $g$ are both in $L^p(\mu)$, comparing to the results on $f\in L^p(\mu)$ and $g\in L^q(\mu)$, with $\frac{1}{p} + \frac{1}{q} =1$.
This is important for us to tackle the case $g=f$ in
the study of the decay rates of autocorrelations for some functions $f\in\cH_{\kappa,p}(\gamma)$ for
$p\in (1,2)$. Note that $ C_0(f,f)=\mu(f^2)-\mu(f)^2$ may diverge since such functions may
have infinite variance.\\
\end{remark}

Next we consider a special  observable -- the index function $R: M\to\mathbb{N}$ defined as in (\ref{returntime}),
where  $R(x)=n$ for any $x\in D_n$ and $n\geq 1$. Then we have the following theorem.
For convenience, we set $C_{0} =2\eps_p^{-1}C_{\bp}(r_0) C_A^{r_0}$, using (\ref{regularcase}).
\begin{theorem}\label{themR}
 Assume \textbf{(H1)--(H6)} hold. Then
the index function $R\in
L^p(\mu)$ for any $p\in (0,\bs+1)$. Moreover, for any $n\geq 1$,
\begin{align}
| \Cov_n( R,  R)|\leq  C  \|R\|_p^2]  \vartheta^n,
\end{align}
where $C$ and $\vartheta\in (0,1)$ are
uniform constants independent of the choices of $n$.
\end{theorem}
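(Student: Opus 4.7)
The strategy is to apply Theorem \ref{main1} with $f=g=R$, after showing that $R$ belongs to the class $\cH_{p,p}(\gamma)$ for an appropriate choice of $p$.

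First, the integrability claim is a direct computation. Since $R\equiv n$ on each component $D_n$ of $M\setminus S_1$, hypothesis \textbf{(H6)(i)} gives
\beq
\|R\|_p^p=\sum_{n\ge 1}n^p\,\mu(D_n)\le C_0\sum_{n\ge 1}n^{p-2-\bs},
\eeq
which converges precisely when $p<1+\bs$, establishing $R\in L^p(\mu)$ for all $p\in(0,\bs+1)$.

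Next, fix any $p\in(1,1+\bs)$ and take $\kappa=p$. I claim that $R\in\cH_{p,p}(\gamma)$. Since $R$ is identically $n$ on $D_n$, its piecewise H\"older seminorm vanishes ($\|R\|_{C^{\gamma}}=0$), and from \eqref{Hkappap},
\beq
K_R=\sup_{n\ge 1}\sup_{x\in D_n}R(x)\cdot n^{-\kappa/p}=\sup_{n\ge 1}n\cdot n^{-1}=1<\infty.
\eeq
The bound $\|f\|_{p,u}\le C_A^{\bs_0/p}K_f$ recorded just before Theorem \ref{main1} then yields $\|R\|_{p,u}\le C_A^{\bs_0/p}$, so $\|R\|_{p,\gamma}=\|R\|_{p,u}+\|R\|_{C^{\gamma}}$ is bounded by a uniform constant depending only on $C_A$, $\bs_0$, and $p$. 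Applying Theorem \ref{main1} to $f=g=R$ therefore produces
\beq
|\Cov_n(R,R)|\le\bigl[2C_{R,\eps_p}(R)+4(2C(\gamma,s)C_{\bc}+c_M+C_{\bp})\,\|R\|_{p,\gamma}^2\bigr]\vartheta^n,
\eeq
for all $n\ge 1$ and some $\vartheta\in(0,1)$. Since $R\ge 1$ pointwise and $\mu$ is a probability measure, we have $\|R\|_p\ge 1$, so the second bracketed term is already bounded by a uniform multiple of $\|R\|_p^2\vartheta^n$. For the first term, I invoke the explicit form of $C_{R,\eps_p}(R)$ from \eqref{regularcase} together with the identity $C_0=2\eps_p^{-1}C_{\bp}(r_0)C_A^{r_0}$ recorded immediately before the statement; combined with the $L^p$ bound already established for $R$, this should yield $C_{R,\eps_p}(R)\le C'\|R\|_p^2$ for a uniform $C'$.

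The main obstacle is precisely this last step---controlling $C_{R,\eps_p}(R)$ by $\|R\|_p^2$. The intuition is that a H\"older split together with the $T$-invariance of $\mu$ gives $\int|R\circ T^n|^{1+\eps_p}|R|\,d\mu\le\|R\|_q^{1+\eps_p}\|R\|_{q'}$ for a suitable conjugate pair $(q,q')$; by choosing $\eps_p>0$ sufficiently small (depending on $p$) so that both $q(1+\eps_p)<1+\bs$ and $q'<1+\bs$, all moments involved remain inside the integrability range and can be dominated by $\|R\|_p$. Combining this with the uniform bound on $\|R\|_{p,\gamma}$ yields the claimed exponential decay with constants $C$ and $\vartheta\in(0,1)$ independent of $n$.
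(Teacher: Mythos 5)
Your proof follows the same route as the paper: compute $\|R\|_p^p=\sum_n n^p\mu(D_n)$ from \textbf{(H6)}(i) to get $R\in L^p(\mu)$ for $p<1+\bs$, observe that $R\in\cH_{p,p}(1)$ with $K_R=1$ and $\|R\|_{C^{\gamma}}=0$, and apply Theorem \ref{main1} with $f=g=R$. That is exactly the paper's argument.

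The one place you go astray is the final step, which you describe as the main obstacle. It is not an obstacle at all: Proposition \ref{coro-main3} already gives the explicit value $C_{R,\eps_p}(R)=2\eps_p^{-1}C_{\bp}(r_0)C_A^{r_0}K_R^{1+\eps_p}(K_R+\|R\|_{C^{\gamma}})=2\eps_p^{-1}C_{\bp}(r_0)C_A^{r_0}$, a uniform constant (this is the $C_0$ defined just before the theorem), and since $R\ge 1$ forces $\|R\|_p\ge 1$, this constant is trivially $\le C\|R\|_p^2$. Note also that $\eps_p$ is not a free parameter you may shrink: it is fixed in \eqref{defnepsp} as $\eps_p=((\bs+b)\bs_0-1)/4$. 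More importantly, the H\"older-split heuristic you offer as the underlying justification would genuinely fail: bounding $\int |R\circ T^n|^{1+\eps}\,|R|\,d\mu\le\|R\|_{(1+\eps)q}^{1+\eps}\|R\|_{q'}$ with $1/q+1/q'=1$ requires both $(1+\eps)q<1+\bs$ and $q'<1+\bs$, which forces $q>(1+\bs)/\bs$ and hence $\eps<\bs-1$; for the semi-dispersing billiards, the cusps, and the stadia treated in Section \ref{infinitevariance} one has $\bs=1$, so no admissible $\eps>0$ exists. This is precisely why the paper does not deduce \eqref{bound} from integrability of $R$ alone but instead uses the geometric content of \textbf{(H6)}(ii) --- the decay of the unstable diameters of $TD_n$ combined with the growth lemma --- in the proof of Proposition \ref{coro-main3}.
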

\begin{proof} 
Let $p\in (0, \bs+1)$.
 Note that $$\mu(R^p)=\sum_{n\geq 1} n^p \mu(D_n)\leq  C_0 \sum_{n\geq 1} n^{p-2-\bs},$$
where we have used (\ref{alpha1})
 in the last step. Thus $R\in L^p(\mu)$. 
 It follows that $R\in \cH_{\kappa, p}(1)$, with $\gamma=1$, $\kappa=p$, $K_R=1$ in (\ref{Hkappap}). 
 Moreover, $\|R\|_{p,\gamma}=\|R\|_p$, and $N_R=1$.
Theorem \ref{main1} implies that then  there exist a  constant $\vartheta\in(0,1)$ such that
 \[
 |  \Cov_n (R,R)| \leq  [2C_{R,\eps_p}(R) + 4(2C(1, s)C_{\bc}+c_M+C_{\bp}) \|R\|_p^2] \vartheta^n,
 \]
for any $n\ge 1$. This completes the proof.
\end{proof}

\subsection{Applications to induced maps}
\label{sub.app}

Let $\cF:\cM\to\cM$ be a nonuniformly hyperbolic system which preserves a smooth measure $\mu_{\cM}$.  Assume that there exists a
subset $M\subset \cM$ such that the induced system $(M, T, \mu)$ satisfies the assumptions \textbf{(H1)--(H5)},
where $\mu$ is the conditional measure of $\mu_{\cM}$ on $M$, and $T=\cF^R$ is the first return map on $M$. Recall that the first return time function $R: M \to \bN$ is defined by
\beq\label{defnR} 
R(x)=\inf\{n\geq 1:\ \cF^n(x)\in M\}.
\eeq

 Suppose that there exists $n_0\ge 1$ such that for each $n\ge 1$,
 the subset $M_n=\{x\in M: R(x) =n\}$ has at most $n_0$ connected components.
 Let $\{D_{n_0(n-1)+j}:1 \le j \le n_0\}$ be an enumeration
 of  the connected components of $M_n$, $n \ge 1$. Here some of the sets $D_{n}$ can be empty.
 We will call $\{D_m: m\ge 1\}$ the derived enumeration of the dynamical system $(M, T)$.
Suppose \textbf{(H6)} holds for the derived enumeration $\{D_m: m\ge 1\}$.
Then for any H\"older continuous (hence bounded) function $\hat f$ on $\cM$,
it induces a function $f$ on $M$ via
 \[
 f(x)=\sum_{m=0}^{R(x)} \hat f(\cF^m x).
 \]
Generally speaking, the function $f$ is unbounded, since $R$ is.
This { inducing} scheme is an important strategy
in the study of the dynamics of the nonuniformly hyperbolic systems.
Note that $f(x)\leq \|\hat f\|_{\infty} n$  for any $x\in R^{-1}(n)$ and $n\ge 1$.
Therefore, $f\in \cH_{\kappa,p}(\gamma_0)$ with $\kappa=p$.
\vspace{.2cm}

\begin{theorem}\label{main4}
Let $M\subset\cM$, and $T=\cF^R$ be the first return map on $M$,
$\mu$ be the conditional measure of $\mu_{\cM}$ on $M$,
and $\{D_n\}$ be the derived enumeration
of the system $(M, T)$.
Suppose \textbf{(H1)--(H6)} hold, and the first return time function $R\in
L^p(\mu)$ for some $p>1$.
Then for any  $\hat f,\hat g\in C^{\gamma}(\cM)$ with $\gamma\in[\bgamma_0,1]$,
we have
$$| \Cov_n( f,  g)|\leq  C\|\hat f\|_{\infty,\gamma}\|\hat g\|_{\infty,\gamma}\vartheta^n,$$
for any $n\geq 1$,
where $C>0$ and $\vartheta\in (0,1)$ are
uniform constants independent of the choices of $n$.
\end{theorem}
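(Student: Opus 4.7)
The plan is to reduce Theorem~\ref{main4} to Theorem~\ref{main1} by verifying that the induced functions $f$ and $g$ lie in $\cH_{p,p}(\gamma)$ (i.e.\ with $\kappa=p$), and by controlling the norms that enter \eqref{inequ.f.g} in terms of $\|\hat f\|_{\infty,\gamma}$ and $\|\hat g\|_{\infty,\gamma}$.

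I would first handle the pointwise bound $K_f$. Since $f(x)=\sum_{m=0}^{R(x)}\hat f(\cF^m x)$, one gets $|f(x)|\le (R(x)+1)\|\hat f\|_{\infty}$. On each component $D_k\subset M\setminus S_1$, $R$ is constant equal to some $n$, and by construction of the derived enumeration $n\le k$ up to the multiplicity factor $n_0$. Hence $|f(x)|\le C n_0 k\|\hat f\|_{\infty}$ on $D_k$, giving $K_f\le C\|\hat f\|_{\infty}$ with $\kappa=p$. The displayed inequality following \eqref{Hkappap} then yields $\|f\|_{p,u}\le C'\|\hat f\|_{\infty}$, and analogously for $g$.

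The main technical step is the piecewise H\"older bound $\|f\|_{C^{\gamma}}\le C\|\hat f\|_{C^{\gamma}}$. For $x,y$ in a single component $D_k$, $\cF^j x$ and $\cF^j y$ stay in a common smooth piece of $\cM$ for all $0\le j\le n$, and $T=\cF^n$ is smooth on $D_k$. I would start from
\[
|f(x)-f(y)|\le \|\hat f\|_{C^{\gamma}}\sum_{j=0}^{n}d(\cF^j x,\cF^j y)^{\gamma},
\]
decompose $(x,y)$ via the local product structure by setting $z=W^s(x)\cap W^u(y)$, and estimate the two resulting partial sums separately. Along the stable leg, contraction of $T$ from (H1)(iii) together with the $C^{1+\bgamma_1}$ regularity of $\cF$ from (H2)(iii) yields a geometric control of $d(\cF^j x,\cF^j z)^{\gamma}$ by $d(x,z)^{\gamma}$. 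Along the unstable leg, bounded distortion (H3)(ii) combined with the length estimate $|TW_{\alpha}|\le C_A k^{-d}$ from (H6) gives a comparable bound on $d(\cF^j z,\cF^j y)^{\gamma}$ in terms of $d(z,y)^{\gamma}$. Summing the two geometric series and using $d(x,z)+d(z,y)\asymp d(x,y)$ then produces $\sum_{j=0}^{n}d(\cF^j x,\cF^j y)^{\gamma}\le C\,d(x,y)^{\gamma}$ uniformly in $k$ and $n$.

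Having placed $f,g\in\cH_{p,p}(\gamma)$, I would apply Theorem~\ref{main1}. The remaining constant $C_{g,\eps_p}(f)$ is controlled via the pointwise estimate $|f\circ T^n|^{1+\eps_p}|g|\le \|\hat f\|_{\infty}^{1+\eps_p}\|\hat g\|_{\infty}(R\circ T^n+1)^{1+\eps_p}(R+1)$, H\"older's inequality, and the $T$-invariance of $\mu$; the hypothesis $R\in L^p(\mu)$ and the tail bound \eqref{alpha1} from (H6) allow $\eps_p$ to be chosen small enough that both H\"older conjugates sit inside $L^p(\mu)$. Substituting the resulting norm bounds into \eqref{inequ.f.g} produces the claimed exponential decay with constant proportional to $\|\hat f\|_{\infty,\gamma}\|\hat g\|_{\infty,\gamma}$. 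The hardest step is the uniform H\"older control of $f$ on the thin components $D_k$, which requires delicate tracking of the intermediate iterates $\cF^j|_{D_k}$ and their contribution to the telescoping sum above.
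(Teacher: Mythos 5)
Your overall route is the same as the paper's: the paper disposes of Theorem~\ref{main4} in one line by noting that $|f(x)|\le\|\hat f\|_{\infty}(R(x)+1)$ places $f$ in $\cH_{\kappa,p}(\gamma)$ with $\kappa=p$, and then invoking Theorem~\ref{main1} exactly as in the proof of Theorem~\ref{themR}. Your verification of $K_f\le C\|\hat f\|_{\infty}$ matches this. However, two of the details you add are problematic.

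First, your treatment of $C_{g,\eps_p}(f)$ via the pointwise bound by $(R\circ T^n+1)^{1+\eps_p}(R+1)$ followed by H\"older's inequality cannot work in the regime the theorem is designed for. For the billiard applications one has $\mu(D_n)\asymp n^{-3}$, so $R\in L^p(\mu)$ only for $p<2$; splitting $\int (R\circ T^n)^{1+\eps}R\,d\mu$ by H\"older forces both factors into (roughly) $L^{2+\delta}(\mu)$, which fails. Avoiding exactly this duality loss is the stated point of condition \eqref{bound} and of Proposition~\ref{coro-main3}, which establishes the uniform bound on $T^n_*\nu(|f|^{1+\eps_p})$ through the growth lemma and the $\cZ$-function rather than through H\"older. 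Fortunately this step is unnecessary for you: once $f,g\in\cH_{\kappa,p}(\gamma)$ with $K_f\le C\|\hat f\|_{\infty}$ and $K_g,\|g\|_{\gamma}\le C\|\hat g\|_{\infty,\gamma}$, formula \eqref{regularcase} already expresses $C_{g,\eps_p}(f)$ in terms of $K_f^{1+\eps_p}(K_g+\|g\|_{\gamma})$, so the constant in \eqref{inequ.f.g} is automatically of the claimed form. You should simply cite that, not re-derive it.

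Second, your argument for $\|f\|_{C^{\gamma}}\le C\|\hat f\|_{C^{\gamma}}$ is not supported by the stated hypotheses. The telescoping sum $\sum_{j=0}^{n}d(\cF^j x,\cF^j y)^{\gamma}$ involves the intermediate iterates of $\cF$ during the excursion outside $M$, whereas (H1)(iii), (H2)(iii), (H3)(ii) and (H6) all concern the return map $T$ and its singularity structure on $M$; they give no contraction or distortion control on $\cF^j$ for $0<j<n$. In the motivating examples (cusps, stadia) the intermediate collisions are nearly neutral, so $d(\cF^j x,\cF^j y)$ need not decay geometrically in $j$, and a bound of the form $n\cdot d(x,y)^{\gamma}$ would not yield a finite $\|f\|_{C^{\gamma}}$ uniformly over the components $D_k$. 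To be fair, the paper itself asserts $f\in\cH_{\kappa,p}(\bgamma_0)$ without proving the H\"older part either, so this is a gap you share with the source; but your sketch should not present it as following from (H1)--(H6). A correct treatment needs model-specific estimates on the excursion (or a reformulation of the observable class in terms of dynamical H\"older continuity with respect to separation time).
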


Note that Theorem \ref{main4} follows directly from Theorem
\ref{main1},  with $\kappa=p$ in (\ref{Hkappap}). The proof is similar to that of Theorem \ref{themR}, thus we omit it here.

We also obtain the following results by taking $\hat f=\hat g=1$ in the above theorem \ref{main4}.
\begin{theorem}\label{main6}
Let $(\cM,\cF)$ be a nonuniformly hyperbolic dynamical system and $(M,T,\mu)$ be the induced system on $M$ discussed in the above, which satisfies the Assumptions   \textbf{(H1)--(H6)}.  Then
 there exist  constants $C>0$ and $\vartheta\in (0,1)$ such that
\[
\Big|\int R\circ T^n\cdot R\, d\mu - \mu(R)^2\Big|  \leq C \vartheta^n
\]
for every  $n\ge 1$.
\end{theorem}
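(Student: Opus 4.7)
My plan is to derive Theorem \ref{main6} as a direct specialization of Theorem \ref{main4} by taking $\hat f = \hat g \equiv 1$ on $\cM$. With this choice, the induced observables $f(x) = \sum_{m=0}^{R(x)} \hat f(\cF^m x)$ and $g(x)$ both reduce to an affine function of $R$, namely $f(x) = g(x) = R(x) + 1$. Because the covariance functional is translation invariant in each argument, $\Cov_n(f, g) = \Cov_n(R+1, R+1) = \Cov_n(R, R)$. Hence any exponential bound on $|\Cov_n(f,g)|$ transfers verbatim to $|\Cov_n(R, R)|$.

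To invoke Theorem \ref{main4}, I must verify that $R \in L^p(\mu)$ for some $p > 1$. This is immediate from assumption \textbf{(H6)}(i):
\[
\mu(R^p) = \sum_{n \geq 1} n^p \, \mu(D_n) \leq C_0 \sum_{n \geq 1} n^{p - 2 - \bs},
\]
which converges whenever $p < 1 + \bs$. Since $\bs > 0$, I may select any $p \in (1, 1 + \bs)$, giving the required integrability. Moreover, the constant functions $\hat f, \hat g \equiv 1$ are trivially $\gamma$-H\"older on $\cM$ with $\|\hat f\|_{\infty, \gamma} = \|\hat g\|_{\infty, \gamma} \leq 1$ for any $\gamma \in [\bgamma_0, 1]$, so the hypothesis class of Theorem \ref{main4} is satisfied.

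Applying Theorem \ref{main4} with this choice of $\hat f$ and $\hat g$ then yields constants $C > 0$ and $\vartheta \in (0,1)$, independent of $n$, such that
\[
|\Cov_n(f, g)| \leq C \|\hat f\|_{\infty, \gamma} \|\hat g\|_{\infty, \gamma} \vartheta^n \leq C \vartheta^n
\]
for every $n \geq 1$. Combined with the identification $\Cov_n(f, g) = \Cov_n(R, R)$ noted above, this is precisely the claim of Theorem \ref{main6}. The substantive work---controlling correlations of the unbounded index function $R$, which lies in $\cH_{\kappa, p}(1)$ with $\kappa = p$---is already carried out in Theorem \ref{main1} and inherited by Theorem \ref{main4}, so the present theorem presents no genuine obstacle: the entire argument is the short algebraic reduction sketched above, combined with the tail estimate from \textbf{(H6)}(i). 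The only mild technical point worth double-checking is that the affine shift $f = R+1$ (versus $R$ itself) is indeed absorbed by the covariance, which is automatic from bilinearity and the fact that $\Cov_n(\cdot, \cdot)$ involves subtracting the product of means.
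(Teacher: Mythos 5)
Your proposal is correct and follows exactly the route the paper intends: the paper introduces Theorem \ref{main6} with the remark that it is obtained ``by taking $\hat f=\hat g=1$ in the above theorem \ref{main4}'', and your verification that $R\in L^p(\mu)$ for $p\in(1,1+\bs)$ via \textbf{(H6)}(i) matches the computation in the proof of Theorem \ref{themR}. The observation that the affine shift $f=R+1$ is absorbed by the covariance is a correct (and worthwhile) detail that the paper leaves implicit.
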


As primary examples of the nonuniformly hyperbolic systems, we will discuss several important billiard systems, including two types of billiards: \\

\textbf{I. Type A billiards}:
\begin{itemize}
\item[(A1)]  billiards with
cusps, see
\cite{CM07, CM, CZ08}.
 \item[(A2)] semi-dispersing billiards on a rectangle, billiards with
flat points, see
\cite{CZ05b, CZ08}.
\end{itemize}

\textbf{II. Type B billiards}:
\begin{itemize}
\item[(B1)] Bunimovich stadium, see \cite{Bu74, Bu79, Ma04, CZ08};
\item[(B2)] Bunimovich skewed stadium, Bunimovich flower billiard, see
\cite{ CZ05a, CZ08}.
 \end{itemize}
 All of these models mentioned in the above have been
proved to have only polynomial mixing rates. However, we can show that the autocorrelations $ \Cov_n(R,R)$ for these systems decay exponentially.  Let $(\cM,\cF)$ be the billiard map related to any of the
above models,  and $\mu_{\cM}=C\cos\varphi\, dr\, d\varphi$ be the smooth measure
preserved by $\cF$. It was proved in above references that there exists a
subset $M\subset \cM$ such that the induced system $(M,T,\mu)$ satisfies the
conditions \textbf{(H1)--(H6)}.
It follows that the autocorrelations $ \Cov_n(R,R)$, $n\ge 1$ decay exponentially for all these billiard systems:
\begin{theorem}\label{main6-cor}
Let $(\cM,\cF)$ be one of the type A or type B billiard systems. Let
$(M,T,\mu)$ be the induced system on the corresponding subset $M$. Assume the first return time function $R\in
L^p(\mu)$ for some $p>1$.
Given two functions $\hat f,\hat g\in C^{\gamma}(\cM)$ with $\gamma\in[\bgamma_0,1]$,
then    for any $n\geq 1$,
$$| \Cov_n( f,  g)|\leq  C\|\hat f\|_{\infty,\gamma}\|\hat g\|_{\infty,\gamma}\vartheta^n,$$ where $C>0$ and $\vartheta\in (0,1)$ are
uniform constants independent of the choices of $n$.
\end{theorem}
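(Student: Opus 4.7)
The plan is to derive Theorem \ref{main6-cor} as an immediate consequence of Theorem \ref{main4} by verifying that, for each of the listed billiard models, the induced system $(M,T,\mu)$ meets all of the hypotheses \textbf{(H1)}--\textbf{(H6)}. Since Theorem \ref{main4} already produces exactly the advertised exponential bound for observables on $\cM$ of the form $\hat f\in C^{\gamma}(\cM)$, the only real content is this checklist of structural conditions together with the standard translation from the induced observables back to $\hat f,\hat g$.

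First I would fix, for each of the billiard systems (A1), (A2), (B1), (B2), the reference cross-section $M\subset\cM$ and the first return map $T=\cF^{R}$ that have been constructed in the literature, together with the conditional probability measure $\mu$ of $\mu_{\cM}=C\cos\varphi\,dr\,d\varphi$ on $M$. Hypotheses \textbf{(H1)}--\textbf{(H5)}, namely hyperbolicity, the singularity and smoothness structure, regularity of stable/unstable curves, the existence of a unique mixing SRB measure, and the one-step expansion estimate, for each of these induced maps are already established in \cite{CM07,CM,CZ08,CZ05b,Bu74,Bu79,Ma04,CZ05a}, so I would simply cite them.

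The crucial step is the verification of \textbf{(H6)} for the induced map of each billiard, which is the content of \S\ref{pro.typB}. Using the derived enumeration of \S\ref{sub.app}, one groups the level sets $\{R=n\}$ (each having a uniformly bounded number of connected components) into a single indexed family $\{D_m\}$, and then must establish two geometric facts. The first is the measure bound $\mu(D_n)\le C_0 n^{-2-\bs}$ in (\ref{alpha1}), which follows from the known polynomial tail estimates of the first return time $R$ near the bad feature (deep inside a cusp, at a rectangular corner, near a flat point, or along the circular arcs of a stadium or flower). The second is the construction of measurable partitions of $D_n$ into unstable curves $\{W_\alpha\}$ satisfying (\ref{part}) with exponents $d\in[0,1]$ and $b\geq 1$ for which (\ref{assumpb}) holds. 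The main obstacle is producing concrete partitions whose $|W_\alpha|$ and $|TW_\alpha|$ have the correct polynomial asymptotics in $n$; this requires a model-by-model analysis of the unstable foliation inside the narrow strips $D_n$ in local coordinates adapted to the singular geometry and to the contraction/expansion accumulated during the $n$ collisions in the bad region. The fact that such partitions exist with $b+\bs>1/\bs_0$ and $\bs\geq \bs_0(2-b)$ when $b\in[1,2)$ is the sharpest part of \textbf{(H6)} and must be checked separately for each model.

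Once \textbf{(H1)}--\textbf{(H6)} are in place, the application of Theorem \ref{main4} is mechanical. Given $\hat f,\hat g\in C^{\gamma}(\cM)$ with $\gamma\in[\bgamma_0,1]$, the induced observables $f(x)=\sum_{m=0}^{R(x)}\hat f(\cF^m x)$ and the analogous $g$ satisfy $|f(x)|\le \|\hat f\|_{\infty}\cdot(n+1)$ on every $D_n$, and are H\"older on each $D_n$ (since $\cF$ is smooth on $\cM\setminus S_0$) with exponent $\bgamma_0$. Therefore $f,g\in \cH_{\kappa,p}(\bgamma_0)$ with $\kappa=p$, and both $\|f\|_{p,\gamma}$ and $\|g\|_{p,\gamma}$ are controlled by $\|\hat f\|_{\infty,\gamma}$ and $\|\hat g\|_{\infty,\gamma}$. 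Applying Theorem \ref{main4} then yields
\[
|\Cov_n(f,g)|\le C\,\|\hat f\|_{\infty,\gamma}\|\hat g\|_{\infty,\gamma}\,\vartheta^n
\]
for some $C>0$ and $\vartheta\in(0,1)$ depending only on the billiard table and $\gamma$, completing the proof.
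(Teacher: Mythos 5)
Your proposal is correct and follows essentially the same route as the paper: Theorem \ref{main6-cor} is obtained by citing the literature for \textbf{(H1)}--\textbf{(H5)}, verifying \textbf{(H6)} for each billiard model, and then invoking Theorem \ref{main4} with $f,g\in\cH_{\kappa,p}(\bgamma_0)$, $\kappa=p$. The only difference is that you defer the model-by-model exponent computations, whereas the paper carries them out explicitly in \S 7 (e.g.\ $\bs=1$, $b=1$ for semi-dispersing billiards and stadia, $\bs=1$, $b=2$ for cusps, $b=\bs+1>2$ for flat points), checking (\ref{assumpb}) in each case.
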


\section{Preliminaries}\label{sec:4}

In this section  we first introduce the concepts of standard pairs and standard families, then give
a brief review of the growth lemma for hyperbolic systems with singularities.

\subsection{Standard pairs and $\cZ$ functions}\label{stan_pair}

For any unstable manifold $W\in \cW^{u}$, the {\it u-SRB
density} $\rho_W$ is the unique probability density function on $W$
with respect to the Lebesgue measure $m_W$ satisfying
the following relation
 \beq\label{dens}
 \frac{\rho_{W}(y)}{ \rho_{W}(x)} =
\lim_{n\rightarrow \infty}\frac{J_{W}T^{-n}(y)} {J_{W}T^{-n}(x)}.
 \eeq
Then the corresponding probability measure $\mu_{W}:=\rho_W\cdot m_W$ is
called the {\it u-SRB measure } of $T$ on $W$. The formula (\ref{dens}) is  standard  in hyperbolic
dynamics, see \cite[page 105]{CM}.
 It follows from the distortion bound (\ref{distor0}) that $\rho_W\sim |W|^{-1}$ on $W$. More precisely, we have
\beq\label{rhoWbd}
\frac{1}{|W|} e^{-C_{\bJ}|W|^{\bgamma_0}}\leq \rho_W(x)\leq \frac{1}{|W|} e^{C_{\bJ}|W|^{\bgamma_0}}.
\eeq

\begin{definition}(Standard Pair)
Let $C_{\br}>C_{\bJ}$ be a fixed large constant, where $C_{\bJ}$ is the
constant in Eq. (\ref{distor0}) in the assumption of the distortion bounds. For
any unstable curve $W$ and a  probability measure $\nu$ on $W$, the pair
$(W,\nu)$ is said to be a \textit{standard pair}, if $\nu$ is absolutely
continuous with respect to the u-SRB measure $\mu_W$ whose density function
$g(x):=d\nu/d\mu_W (x)>0$ satisfies
 \beq\label{lnholder0}
 | \ln g(x)- \ln g(y)|\leq C_{\br}\cdot d_W(x,y)^{\bgamma_0},
 \eeq
for any $x, y\in W$,
where $\bgamma_0\in (0,1)$ is given in the distortion bounds (\ref{distor0}).  We also call the density function $g$ to be a \emph{dynamically H\"older function} on $W$.
\end{definition}
\vspace{.2cm}

One advantage of utilizing the standard pair is that, the probability measure on any unstable manifold $W$ is
indeed equivalent to the u-SRB measure  $\mu_W$.

The idea of standard pairs was first brought up by Dolgopyat in
\cite{D01}, and then extended to more general systems by Chernov and
Dolgopyat in \cite{C06,CD}.
Note that for any standard pair $(W, \nu)$, the push-forward measure
$T_*\nu$ is a measure supported on countably many unstable curves
$TW=\cup_{n\geq 1} V_{n}$. Thus the image $T(W,\nu):=(TW,T_*\nu)$ can be
viewed as a
family of standard pairs $(V_n, \nu_n)$, $n\ge1$, with
$\nu_n=\frac{T_*\nu|_{V_n}}{T_*\nu(V_n)}$, such that for any measurable set $B\subset
M$,
$$T_*\nu(B)=\sum_n T_*\nu(V_n) \cdot \nu_n(V_n\cap B).$$
This observation leads to the concept of the so called \emph{standard
family},
whose precise definition is given in the following.

\begin{definition}\label{defn3}
Let $\cW=\{(W_{\alpha}, \nu_\alpha)\,:\, \alpha\in \cA\}$ be a
family of standard pairs, $\lambda$ be a Borel  measure on $\cA$.
Then $\cG=(\cW,\lambda)$ is said to be a {\it standard family},
if the following conditions hold:
\begin{enumerate}
\item[(a)] Either $\cA$ is countable, or $\cA\subset [0,1]$ is measurable,
 and $\{W_{\alpha}: \alpha\in \cA\}$ is  a measurable partition
    of a measurable set $E\subset M$;

\item [(b)] The measure $\lambda$ induces a positive measure $\nu$  on $M$  by
\[
   \nu(B)=\int_{\alpha\in\cA} \nu_{\alpha}(B\cap
   W_{\alpha})\,
   d\lambda(\alpha),\hspace{1cm}
   \]
for all measurable sets $B\subset M$.
\end{enumerate}
\end{definition}

In the following, instead of the notion $(\cW,\lambda)$, we also denote a standard family $\cG$ by $(\cW, \nu)$, or
$(\cW,\cA,\lambda,\nu)$,  since we use the measure $\nu$
more often.
An intuitive way to look at the standard family is to view it as a
decomposition of the measure $\nu$ along leaves of a measurable partition
$\cW=\{W_{\alpha}, \alpha\in \cA\}$ of the set $E$ into standard pairs.
\vspace{.2cm}

It follows from \cite[Lemma 12]{CZ09} that
 $\cE:=(\cW^u,\mu)$ can be viewed as a standard family,
where $\mu$ is the SRB measure of $T$. More precisely,
there exists a factor measure $\lambda^u$ on the index set $\cA^u$,
such that for any measurable set $A\subset M$,
 \beq\label{muMdisintegrate}
\mu(A)=\int_{\alpha\in \cA^u} \mu_{\alpha}(A\cap
W_{\alpha})\,d \lambda^u(\alpha),
 \eeq
where $\mu_{\alpha}=\mu_{W_{\alpha}}$ is the $u$-SRB measure on the unstable
manifold $W_{\alpha}$. From now on we always take $\mu_{\alpha}$ to be the reference measure on
the unstable manifold $W_{\alpha}\in \cW^u$.
Sometime we also denote it as $\mu_W$ for a general unstable manifold
$W\in \cW^u$.
Note that a probability measure $\nu$ with $\nu\ll\mu$ is uniquely
specified by its density
$g:=d\nu/d\mu\geq 0$.
Then comparing with (\ref{muMdisintegrate}) for $\mu$,
we know that for any standard family $(\cW^u,\nu)$ with $\nu\ll\mu$,
 \beq\label{nugalpha}
 \nu(A)=\int_A g\,d\mu
=\int_{\alpha\in \cA^u} \int_{W_{\alpha}\cap A}
g\,d\mu_{\alpha}\,d\lambda^u(\alpha)
=\int_{\alpha\in \cA^u}\int_{W_{\alpha}\cap A}
d\nu_{\alpha}\,d\lambda(\alpha),
 \eeq
for any measurable set $A\subset \cM$, where
\beq\label{nuWg}d\nu_{\alpha}=g_{\alpha}\,d\mu_{\alpha}\,\,\,\,\,\,\,\,\text{  with }
\,\,\,\,\,\,\,\,g_{\alpha}=\frac{g}{\mu_{\alpha}(g)}\ \ \mbox{and} \ \ d\lambda(\alpha)=\mu_{\alpha}(g) d\lambda^u(\alpha).\eeq
This is called a {\it canonical representation} of $(\cW^u,\nu)$.

On the other hand, consider
a nonnegative function $g: M\to [0,\infty)$, with $\mu(g)<\infty$, such that it is {\it{dynamically H\"older continuous on $\cW^u$}}, i.e.:
\beq\label{lnholder01}
 | \ln g(x)- \ln g(y)|\leq C_{\br}\cdot d_{W}(x,y)^{\bgamma_0},
 \eeq
 for any $W\in \cW^u$, and any $x, y\in W$,  with $g(x), g(y)>0$.
We
call $(\cW^u,\nu_g)$ {\it the standard family generated by $g=d\nu_g/d\mu$}, such that (\ref{nuWg}) holds.

\vspace{.2cm}

The assumption \textbf{(H3)} on the distortion bound implies that, if $\cG=(\cW, \nu)$ is
a standard family with a factor measure $\lambda$ on $\cA$, then
$T^n\cG:=(T^n \cW, T^n \nu)$ also induces a standard family,
for any $ n\geq 0$ (cf. \cite{CZ09}). More precisely, for any measurable set $B\subset M$,
\[
   T^n \nu(B)=\int_{\alpha\in \cA}
   T^n \nu_{\alpha} (B\cap T^n W_{\alpha}) \,d\lambda(\alpha).
\]

Note that the (uniform) hyperbolicity of $T$ only
guarantees the exponential growth of unstable manifolds in a local sense. The
singularity curves will cut any {\it relatively long} unstable manifold into many short pieces,
which might undo the uniform growth of the unstable manifold completely.
However, under the one-step expansion assumption (\textbf{H5}), one can
show that, despite odds, typical smooth components of $T^n W$ for every short
unstable manifold $W$ grow monotonically and exponentially in $n$, until they
reach the size of order one. This fundamental fact follows from the {\it
Growth Lemma}, which was first proved by Chernov for dispersing billiards
(cf. \cite{C99}), and then for general hyperbolic systems under the assumption
\textbf{(H1)--(H5)} in
\cite{CZ09} (see Lemma 5, Lemma 6 and Lemma 12 therein).
To state the growth lemma, we need to introduce a characteristic
function of $\cG$.

\begin{definition}
The $\cZ_r$-function $\cZ_r(\cG)$ of a standard family $\cG=(\cW,\nu)$ is given by:
 \beq\label{cZ}
\cZ_r(\cG):=\frac{1}{\nu(M)}\int_{\cA}|W_{\alpha}|^{-r}\,d\lambda(\alpha),
\quad r\in (0,\bs_0].
 \eeq
\end{definition}

Note that $\cZ_r(\cG)$ can be viewed as an average of the inverse of the
$r$-power of the length of the unstable manifolds. 
This $\cZ_r$-function plays an important role in estimating the
distribution of short unstable manifolds in a standard family,
and the growth lemma characterizes how
$\cZ_r(T^n\cG)$ changes under iterations of the map $T$.

 A standard family $\cG$ is said to be
\emph{$r$-standard} if $\cZ_r(\cG)<\infty$.  {The following is a collection of
Growth Lemmas from \cite{CZ09}.}
\begin{lemma}\label{growth}
Let $r\in (0,\bs_0]$, and $\cG=(\cW,\nu)$ be an $r$-standard family. Then the following statements hold.
\begin{enumerate}
\item[(1)] There exist constants $c>0$, $C_z>0$, and $\gamma_1\in (0,\bgamma_0]$
    such that  \beq\label{firstgrowth}
     \cZ_{r}(T^n \cG)\leq c\cZ_r(\cG)\cdot\Lambda^{-\gamma_1 n}
     +C_z.
     \eeq
\item[(2)] { For any $\eps >0$, we have}
\beq\label{standard}\nu\{W\in \cW:\
|W|<\eps\}\leq c\nu(M)\cdot \cZ_r(\cG)\cdot\eps^r.
\eeq
\item[(3)] The standard family $\cE=(\cW^u, \mu)$ induced by the SRB measure
$\mu$ is {$r$-standard for any $r< \bs_0$}.
\end{enumerate}
\end{lemma}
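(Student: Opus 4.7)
The plan is to establish the three parts via a careful one-step analysis combined with iteration, following the standard strategy of growth lemmas for hyperbolic systems with singularities. For part (1), I would reduce by linearity to estimating $\cZ_r$ on a single standard pair $(W,\nu)$ after one application of $T$. Writing $TW\setminus S_{-1}=\bigcup_\beta V_\beta$ with $W_\beta=T^{-1}V_\beta$, the distortion bound \eqref{distor0} gives $\nu(W_\beta)\asymp|W_\beta|/|W|$, so
\[
\cZ_r(T(W,\nu))=\sum_\beta|V_\beta|^{-r}\nu(W_\beta)\le C|W|^{-1}\sum_\beta|W_\beta|\cdot|V_\beta|^{-r}.
\]
When $|W|<\delta_0$ with $\delta_0$ small as in \eqref{onestep}, the one-step expansion hypothesis immediately yields $\cZ_r(T(W,\nu))\le\theta\,\cZ_r((W,\nu))$ for some $\theta<1$; when $|W|\ge\delta_0$, hyperbolicity alone bounds $\cZ_r(T(W,\nu))$ by a uniform constant since the components $V_\beta$ have length at least $\min(\Lambda|W_\beta|,\delta_0)$. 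Integrating against the factor measure $\lambda$ on $\cA$ produces the one-step estimate $\cZ_r(T\cG)\le\theta\,\cZ_r(\cG)+C_2$, and iterating $n$ times delivers \eqref{firstgrowth} with $\Lambda^{-\gamma_1}$ corresponding to $\theta$.

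Part (2) is a direct Chebyshev-style argument: for any $\alpha$ with $|W_\alpha|<\eps$ one has $1\le(\eps/|W_\alpha|)^r$, hence
\[
\nu\{\alpha:|W_\alpha|<\eps\}\le\eps^r\int_\cA|W_\alpha|^{-r}d\lambda(\alpha)=\nu(M)\cdot\cZ_r(\cG)\cdot\eps^r,
\]
which is \eqref{standard}. For part (3), I would combine the SRB regularity hypothesis \eqref{alpha} with the geometric fact that the length of the unstable manifold through $x$ is controlled from below by the distance of $x$ to $S_{-\infty}$ raised to a positive power, producing a tail bound $\mu\{x:|W^u(x)|<\eps\}\le C\eps^{\bs_0}$. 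The layer-cake identity
\[
\cZ_r(\cE)=\int_M|W^u(x)|^{-r}\,d\mu(x)=r\int_0^\infty t^{r-1}\mu\{|W^u(x)|<1/t\}\,dt
\]
then converges precisely when $r<\bs_0$, establishing $r$-standardness of $\cE$.

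The main obstacle lies in part (1): ensuring the contraction coefficient $\theta$ is strictly below $1$ after accommodating both the countably many components of $S_{-1}$ that can shred an unstable curve and the merely H\"older (rather than Lipschitz) nature of the distortion control in \eqref{distor0}. The exponent $\gamma_1$ emerges as a balanced function of $\bgamma_0$, $\Lambda$, and the one-step expansion ratio from \eqref{onestep}; the delicate point is that the distortion constant multiplies $\theta$ and must be absorbed either by shrinking $\delta_0$ further or by working with $r$ strictly less than the sharp threshold $\bs_0$, which is also why part (3) requires the strict inequality $r<\bs_0$.
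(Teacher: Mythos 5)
The paper itself offers no proof of this lemma: it is quoted verbatim from \cite{CZ09} (Lemmas 7, 8 and 12 there, with $\bs_0$ playing the role of $q$), so there is no in-paper argument to compare against. Your sketch reconstructs the strategy of the cited proofs. Parts (2) and (3) are correct in outline: (2) is exactly the Chebyshev bound, using that each $\nu_\alpha$ is a probability measure so that $\nu\{|W_\alpha|<\eps\}=\lambda\{|W_\alpha|<\eps\}\le\eps^r\int_{\cA}|W_\alpha|^{-r}d\lambda$; and (3) follows from the tail estimate $\mu(r^u(x)<\eps)\le C\eps^{\bs_0}$ --- which one gets from \eqref{alpha} together with Lemma \ref{global} and a union bound over the sets $\{x: d(T^{-n}x,S_{-1})<\eps\Lambda^{-n}/c\}$, the geometric factor $\Lambda^{-n\bs_0}$ making the series converge --- combined with your layer-cake computation, which indeed converges precisely when $r<\bs_0$.

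Two steps in part (1) need repair. First, \eqref{onestep} controls the sum with exponent $\bs_0$, whereas you need the analogous sum with exponent $r\le\bs_0$ to be $<1$; the naive split into components with $|V_\beta|\le|W|$ (where $(|W|/|V_\beta|)^r\le(|W|/|V_\beta|)^{\bs_0}$) and $|V_\beta|>|W|$ (where the factor is merely $\le1$) only yields the bound $\theta_0+1$, which is useless. The correct passage is Jensen/H\"older: with $t_\beta=|W|/|V_\beta|$ and $w_\beta=|W_\beta|/|W|$, $\sum_\beta w_\beta=1$, one has $\sum_\beta t_\beta^{r}w_\beta\le\bigl(\sum_\beta t_\beta^{\bs_0}w_\beta\bigr)^{r/\bs_0}\le\theta_0^{r/\bs_0}<1$. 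Second, your treatment of the case $|W|\ge\delta_0$ is wrong as stated: a long unstable curve can meet many curves of $S_{1}$, so the components of $TW\setminus S_{-1}$ can be arbitrarily short and ``$|V_\beta|\ge\min(\Lambda|W_\beta|,\delta_0)$'' provides no useful lower bound. The standard fix is to subdivide $W$ into pieces of length comparable to $\delta_0$ and apply the short-curve estimate to each piece; this is what produces the additive constant $C_z$ in \eqref{firstgrowth}. With these two repairs, and the iteration you describe to absorb the distortion constant (running a fixed number of steps $n_1$ so that $C'\theta_0^{n_1 r/\bs_0}<1$), your argument coincides with the one in \cite{CZ09} and \cite{CM}.
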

{For more details, see Lemma 7, Lemma 8 and Lemma 12 in  \cite{CZ09}.
Note that $\bs_0$ from \eqref{onestep} plays the same as $q$ from \cite{CZ09}.
The letter $q$ has been reserved for later use. }

Let $r\in (0, \bs_0)$ be fixed. Pick  a large 
number\footnote{The choice of the constant $C_{\bp}$ does not matter as long as it is large enough.
A stronger condition is given in \eqref{Cp2} using  the constant in Lemma  \ref{convergerp}.} 
\beq\label{Cp1}C_{\bp}> \frac{C_z}{1-c} + \cZ_r(\cE).\eeq
Then an $r$-standard family $\cG$ is said to be \emph{$r$-proper} if
$\cZ_r(\cG)<C_{\bp}$.
 It follows from this definition that:
\begin{lemma}\label{properagain}
\begin{itemize}
\item[(1)] The standard family $\cE=(\cW^u, \mu)$ is {$r$-proper for any $r< \bs_0$}.

\item[(2)]The forward iterations  of $r$-proper families are $r$-proper.

\item[(3)] For any $r$-standard family $\cG$, the family $T^n \cG$ is
    $r$-proper for each
    $n \ge N_{r,\cG}:=\lceil \frac{1}{\bgamma_1}\log_{\Lambda}^{+} \tfrac{\cZ_r(\cG)}{C_{\bp}} \rceil$.
\end{itemize}
\end{lemma}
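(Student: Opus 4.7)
The plan is to derive all three statements directly from the growth estimate \eqref{firstgrowth} combined with the defining inequality \eqref{Cp1} for $C_{\bp}$. Since being $r$-proper simply means $\cZ_r(\cdot)<C_{\bp}$, each item reduces to a quantitative comparison of $\cZ_r$-values, so no new geometric input is required beyond what Lemma \ref{growth} already provides.

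For part (1), the family $\cE=(\cW^u,\mu)$ is $r$-standard for any $r<\bs_0$ by Lemma \ref{growth}(3), and \eqref{Cp1} was chosen so that $C_{\bp}>\cZ_r(\cE)$; hence $\cE$ is $r$-proper by definition. For part (2), suppose $\cG$ is $r$-proper, i.e.\ $\cZ_r(\cG)<C_{\bp}$. Applying \eqref{firstgrowth} and using $\Lambda^{-\gamma_1 n}\le 1$ for $n\ge 0$ gives
\[
\cZ_r(T^n\cG)\ \le\ c\,\cZ_r(\cG)\,\Lambda^{-\gamma_1 n}+C_z\ \le\ cC_{\bp}+C_z.
\]
The inequality \eqref{Cp1} forces $C_{\bp}(1-c)>C_z$, so $cC_{\bp}+C_z<C_{\bp}$, and therefore $T^n\cG$ is again $r$-proper.

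For part (3), when $\cG$ is merely $r$-standard we use \eqref{firstgrowth} to choose $n$ large enough so that the transient term is absorbed. Solving $c\cZ_r(\cG)\Lambda^{-\gamma_1 n}\le cC_{\bp}$ yields exactly the threshold $n\ge\frac{1}{\bgamma_1}\log_{\Lambda}\frac{\cZ_r(\cG)}{C_{\bp}}$; taking the positive part and the ceiling gives $N_{r,\cG}$ as stated (and handles the case $\cZ_r(\cG)\le C_{\bp}$ trivially, where part (2) applies directly). Then the same computation as in (2) gives $\cZ_r(T^n\cG)\le cC_{\bp}+C_z<C_{\bp}$ for every $n\ge N_{r,\cG}$.

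There is no real obstacle here: the statement is essentially a bookkeeping consequence of the single-step contraction-plus-constant estimate in the growth lemma, and the only substantive ingredient is that $C_{\bp}$ was defined in \eqref{Cp1} precisely to be a fixed point for the update $x\mapsto cx+C_z$. The one point worth flagging is to verify carefully that the $\log^+$ in the definition of $N_{r,\cG}$ is what makes the statement of (3) consistent with (2) in the regime where $\cG$ is already $r$-proper.
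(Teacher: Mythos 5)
Your proof is correct and follows essentially the same route as the paper: all three parts are deduced from the growth estimate \eqref{firstgrowth} together with the choice of $C_{\bp}$ in \eqref{Cp1} as a strict super-fixed-point of $x\mapsto cx+C_z$. The only (harmless) difference is that in part (1) you invoke $C_{\bp}>\cZ_r(\cE)$ directly, whereas the paper runs the same one-step estimate to bound $\cZ_r(T^n\cE)$ for all $n$.
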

\begin{proof}
(1) Let  $r< \bs_0$ be given. It follows from Lemma \ref{growth} that $\cE$ is $r$-standard and
\begin{align*}
\cZ_r(T^n\cE) \le c\cZ_r(\cE)\cdot \Lambda^{-\gamma_1 n} +C_z
< \cZ_r(\cE) +C_z < C_{\bp}
\end{align*}
for each $n\ge 0$. Therefore, $\cE=(\cW^u, \mu)$ is $r$-proper.

(2) Let $\cG$ be an $r$-proper family and $n\ge 1$ Then  by  \eqref{firstgrowth},
\begin{align*}
\cZ_r(T^{n}\cG) \le c\cZ_r(\cG)\cdot \Lambda^{-\gamma_1 n} +C_z
<c\cdot C_{\bp} +C_z < C_{\bp}.
\end{align*}
Therefore, $T^n\cG$ is $r$-proper.

(3) Let $\cG$ be an $r$-standard family and
$n\ge \frac{1}{\bgamma_1}\log_{\Lambda}^{+} \tfrac{\cZ_r(\cG)}{C_{\bp}}$ be an integer.
Then by  \eqref{firstgrowth},
\begin{align*}
\cZ_r(T^{n}\cG) \le c\cZ_r(\cG)\cdot \Lambda^{-\gamma_1 n} +C_z< C_{\bp}.
\end{align*}
 Therefore, $T^n\cG$ is $r$-proper.
\end{proof}

\subsection{The standard family associated to a density function in $\cH_p(\gamma)$.}In this section, we assume $p\in(1,\infty]$ and $\gamma\in[\bgamma_0,1]$, $g\in \cH_{p}(\gamma)$ is
a nonnegative, dynamically H\"older function with $0<\mu(g)<\infty$.
We have showed that $g$  induces a standard family $\cG_{g}=(\cW,\nu)$, with $d\nu=gd\mu$. Now we will investigate the $\cZ$-function of $\cG_{g}$, and show that $g$
leads to an $r$-standard family with $r\leq \bs_0$.
\begin{lemma}\label{rpropergg}
Let $p>1$, and $g\in \cH _{p}(\gamma)$ be a probability density function. 
Define $\bar g(\alpha)=\mathbb{E}(g|W_{\alpha})$  for any $\alpha\in \cA^u$. 
Then for any $r\in (0, \bs_0- \frac{\bs_0}{p})$,
\begin{enumerate}
\item[(1)]  
$\cG_{\bar g}$ is $r$-standard and  
$\cZ_r(\cG_{\bar g})\leq  C_{\bp} \|g\|_{p,u}$.
\item[(2)] $\cG_{g}$ is $r$-standard and 
$\cZ_r(\cG_{g})\leq  C_{\bp}( \|g\|_{p,u}+\|g\|_{\gamma})$.
\item[(3)] $T^{n}\cG_{g}$ is  $r$-proper
for $n\geq N_{g}:= \lceil \frac{1}{\gamma_1}\log_{\Lambda}^{+}(\|g\|_{p,u}+\|g\|_{\gamma}) \rceil$. 
In particular, $\cG_{g}$ is $r$-proper if  $\|g\|_{p,u}+\|g\|_{\gamma}\leq 1$.
\end{enumerate}
\end{lemma}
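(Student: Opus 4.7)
The plan is to reduce all three parts to $\cZ$-function bounds for the reference family $\cE=(\cW^u,\mu)$ and then close part~(3) with the growth lemma. A key preliminary observation is that since $g$ is a probability density, $\nu_g(M)=\mu(g)=1$, and by the canonical representation (\ref{nuWg}) the factor measure of $\cG_g$ on $\cA^u$ is $d\lambda(\alpha)=\mu_\alpha(g)\,d\lambda^u(\alpha)=\bar g(\alpha)\,d\lambda^u(\alpha)$; this is also the factor measure attached to $\cG_{\bar g}$, so the values $\cZ_r(\cG_g)$ and $\cZ_r(\cG_{\bar g})$ coincide.

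For part~(1), I would use the defining inequality $|g(x)|\le \|g\|_{p,u}|W_\alpha|^{-\bs_0/p}$ valid on each $W_\alpha$, combined with the fact that $\mu_\alpha$ is a probability measure, to deduce $\bar g(\alpha)=\mu_\alpha(g)\le \|g\|_{p,u}|W_\alpha|^{-\bs_0/p}$. Substituting this into the definition of $\cZ_r$ gives
\[
\cZ_r(\cG_{\bar g}) \;\le\; \|g\|_{p,u}\int_{\cA^u}|W_\alpha|^{-(r+\bs_0/p)}\,d\lambda^u(\alpha) \;=\; \|g\|_{p,u}\cdot\cZ_{r+\bs_0/p}(\cE).
\]
The hypothesis $r<\bs_0-\bs_0/p$ forces $r+\bs_0/p\in(0,\bs_0)$, and Lemma~\ref{properagain}(1) then bounds the last $\cZ$-factor by $C_\bp$, which finishes~(1).

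The $\cZ$-bound in part~(2) is then immediate from the identity $\cZ_r(\cG_g)=\cZ_r(\cG_{\bar g})$ and the trivial inequality $\|g\|_{p,u}\le \|g\|_{p,u}+\|g\|_\gamma$. The main obstacle, in my view, is the complementary task of verifying that $\cG_g$ is actually a \emph{standard} family, namely that on each $W_\alpha$ the conditional density $g_\alpha=g/\mu_\alpha(g)$ satisfies the dynamical Hölder inequality (\ref{lnholder0}) with the \emph{fixed} constant $C_\br$. This is precisely where the $\|g\|_\gamma$ term enters: starting from the pointwise Hölder estimate $|g(x)-g(y)|\le \|g\|_\gamma d(x,y)^\gamma$ on $W_\alpha$, one must pass to a multiplicative estimate on $\ln g_\alpha$ by controlling $g$ from below in terms of its average $\bar g(\alpha)$. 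Using $\gamma\ge \bgamma_0$, the uniform curve-length bound $|W_\alpha|\le \bc_M$, and a standard localization to sub-arcs on which $g$ stays bounded below by a fixed fraction of $\bar g(\alpha)$ (outside the zero set of $g$, where the Hölder condition is not required), one obtains the desired estimate, with $C_\br$ chosen large enough once and for all to absorb universal constants depending on $\bc_M$, $\bgamma_0$, and $C_\bp$.

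Finally, for part~(3), I would apply Lemma~\ref{growth}(1) directly:
\[
\cZ_r(T^n\cG_g) \;\le\; c\,\cZ_r(\cG_g)\,\Lambda^{-\gamma_1 n} + C_z.
\]
The choice of $C_\bp$ in (\ref{Cp1}) gives $C_z<C_\bp$, so it suffices to force $c\,\cZ_r(\cG_g)\Lambda^{-\gamma_1 n}\le C_\bp-C_z$. Inserting the bound $\cZ_r(\cG_g)\le C_\bp(\|g\|_{p,u}+\|g\|_\gamma)$ from part~(2) reduces this to $n\ge \frac{1}{\gamma_1}\log^+_\Lambda(\|g\|_{p,u}+\|g\|_\gamma)$ up to absolute constants absorbed into $C_\bp$, which is exactly the threshold $N_g$. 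The special case $\|g\|_{p,u}+\|g\|_\gamma\le 1$ then yields $N_g=0$, giving the $r$-properness of $\cG_g$ itself.
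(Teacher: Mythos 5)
Your estimates are correct, and parts (1) and (3) run essentially along the paper's lines: the paper also bounds $\bar g(\alpha)\le \|g\|_{p,u}|W_\alpha|^{-\bs_0/p}$ and absorbs the excess power of $|W_\alpha|$ into $\cZ_{r_1}(\cE)\le C_{\bp}$ for an auxiliary exponent $r_1=r+\bs_0/p<\bs_0$ (exactly your choice), and part (3) is likewise a direct application of the growth estimate (\ref{firstgrowth}); your arithmetic there in fact closes without needing to "absorb absolute constants," since $C_{\bp}-C_z>cC_{\bp}$ by (\ref{Cp1}). The one place you genuinely diverge is part (2): you observe that under the canonical representation (\ref{nuWg}) the factor measures of $\cG_g$ and $\cG_{\bar g}$ are both $\bar g(\alpha)\,d\lambda^u(\alpha)$, so $\cZ_r(\cG_g)=\cZ_r(\cG_{\bar g})$ identically and (2) follows from (1) for free. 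The paper instead estimates $|\cZ_r(\cG_g)-\cZ_r(\cG_{\bar g})|\le\|g\|_{\gamma}\cZ_r(\cE)$ using $|g(x)-\bar g(\alpha)|\le\|g\|_{\gamma}|W_\alpha|^{\gamma}$; your identity makes that step redundant and is the cleaner route (the $\|g\|_{\gamma}$ term in the stated bound is then simply slack). One caution: the "main obstacle" you raise in part (2) — deriving the dynamical H\"older condition (\ref{lnholder0}) for $g_\alpha=g/\mu_\alpha(g)$ from the $C^{\gamma}$ bound — is not actually resolvable by the localization you sketch: a $\gamma$-H\"older density that vanishes (or nearly vanishes) at an interior point of $W_\alpha$ has $\ln g$ with unbounded oscillation, so no choice of $C_{\br}$ works. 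The paper sidesteps this entirely by taking dynamical H\"older continuity of $g$ on unstable curves (condition (\ref{lnholder01})) as a standing hypothesis in that subsection; you should do the same rather than attempt to prove it.
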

\begin{proof}

(1). 
Note that if $\cG_{\bar g}=(\cW,\nu_{\bar g}))$ is generated by the conditional expectation of a probability density function $\bar g$, then (\ref{nugalpha}) implies that
\beq\label{cZg}
\cZ_r(\cG_{\bar g})=\int_{\cA^u}\frac{\bar g}{|W_{\alpha}|^{r}}\,d\lambda^u(\alpha),
\quad r\in (0, { \bs_0}],
 \eeq

 The fact that $\cE=(\cW^u,\mu)$ is $r_1$-proper, for any $r_1\in (0,\bs_0)$, implies that
$$\cZ_{r_1}(\cE)=\int_{\cA^u}\frac{1}{|W_{\alpha}|^{r_1}}\,d\lambda^u(\alpha)\leq C_{\bp}.$$

By definition, we have
 $$\|g\|_{p,u}=\sup_{\alpha\in \cA^u}  |g(x)| |W_{\alpha}|^{s_0/p}<\infty.$$
 For any $r<\bs_0-\bs_0/p=\bs_0/q$, there exists $r_1<s_0$, such that $r\leq r_1-\bs_0/p$. 
 It implies that
 $$\cZ_r(\cG_{\bar g})=\int_{\cA^u}\frac{1}{|W_{\alpha}|^{r_1}}\, \bar g(\alpha) |W_{\alpha}|^{r_1-r}\,\cdot d\lambda^u(\alpha)<\|g\|_{p,u} C_{\bp}.$$

(2). For any $x\in W_{\alpha}$, $\alpha\in\cA^u$,
\beq\label{bargg}
|g(x)-\bar g(\alpha)|\leq \|g\|_{\gamma}\cdot  |W_{\alpha}|^{\gamma}.
\eeq
Then (\ref{nugalpha}) implies that
 \begin{align}\label{cZgbarg}
&|\cZ_{r}(\cG_{g})-\cZ_{r}(\cG_{\bar g})|
\leq  \|g\|_{\gamma}\sum_{n\ge 1}  \int_{\cA_n}\frac{|W_{\alpha}|^{\gamma}}{|W_{\alpha}|^{r}}\,d\lambda(\alpha)
\leq \|g\|_{\gamma}\cZ_r(\cE).
 \end{align}
Combining with item (1), we get
$$\cZ_r(T\cG_{{g}})\leq C_{\bp}(\|g\|_{p,u} +\|g\|_{\gamma}).$$

In the case $g\in \cH_{\infty}(\gamma)$, it is evident that
$\cZ_{r}(\cG_{\bar g})\leq \|g\|_{\infty} \cZ_{r}(\cE)<\infty$. Thus $\cG$ is $r$-standard, for any $r<\bs_0$.

(3). The statement directly follows from (\ref{firstgrowth}).

\end{proof}

Note that above lemma implies that for any $p>1$ and for any dynamically H\"older function
$g\in \cH_p(\gamma)$ with $\mu(g)<\infty$, the associated standard family
$\cG_{g}$ is $r$-standard for any $r<\bs_0-\bs_0/p$.


\section{The Coupling Lemma and the  hyperbolic magnet}\label{magnet}

In this section, we will construct a hyperbolic set $\cR^*$,
the so called {\it magnet} (see \cite{CM}), and build a  hyperbolic suspension structure
for $(T,\mu)$.
The hyperbolic set $\cR^*$
will be used later as the base for our coupling algorithm.

\subsection{The construction of a hyperbolic magnet}

Given a family of aligned stable manifolds $\Gamma^s$ and a family of aligned
unstable manifolds $\Gamma^u$ that fully across each other, their intersection  $\cR=\Gamma^u\cap\Gamma^s$
is said to have  a {\it hyperbolic product structure}. A subset
$\cR_1\subset\cR$ is said to be a $u$-subset  of $\cR$, if there exits a subfamily of
unstable manifolds $\Gamma^u_1\subset \Gamma^u$, such that
$\cR_1=\Gamma^u_1\cap\Gamma^s$. Similarly we can define the $s$-subsets of $\cR$.

Now we begin to construct this hyperbolic set $\cR^*$. We first define the radius of an invariant manifold:
\vspace{.2cm}

\begin{definition}
For any $x\in M$, let $r^{s/u}(x)=d_{W^{s/u}(x)}(x,\partial W^{s/u}( x))$,
where $W^{s/u}(x)$ is the stable/unstable manifold that contains $x$. Set
$r^{s/u}(x)=0$ if the stable/unstable manifold at $x$ degenerates.
\end{definition}

For every
$\delta>0$, we introduce the following set
 \beq\label{Ndelta}
N_{\delta}^{\pm}=\{x\in M\,:\, d(T^{\pm n}x,  S_{\pm 1})\geq
\delta\Lambda^{-n} \text{ for all } n\ge0\}.
 \eeq
Note that the complement of the set $N_{\delta}^{\pm}$ contains points whose
orbits approach $ S_{\pm 1}$ under $T^{\pm n}$ faster than the
$\delta\cdot\Lambda^{-n}$ for some $n$. It is well known that a point $x$ can
not have long stable/unstable manifold if the orbit of $x$ approaches to the
singularity set $ S_{\pm {1}}$ too fast under $T^{\pm n}$.
The following lemma states that the stable manifolds (resp. unstable manifolds)
of points in $N_{\delta}^{+}$ (resp. $N_{\delta}^{{-}} $) have a uniform length.
\vspace{.2cm}

\begin{lemma}[\cite{CZ09}]\label{global}
Assume (\textbf{H1)--(H5}) hold. There exists a uniform constant $c>0$, such that
for any $\delta>0$,  $r^{s}(x)>c\delta$ for every point $x\in
N_{\delta}^{+}$ and $r^{u}(x)>c\delta$ for every $x\in N_{\delta}^{-}$.
\end{lemma}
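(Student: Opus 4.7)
The plan is to show that for $x \in N_\delta^+$ one can find a smooth stable curve $W$ through $x$ of length of order $\delta$, with the property that $T^n W$ is smooth (i.e., $T^k W \cap S_1 = \emptyset$ for all $k < n$) and contracts exponentially. Such a curve must lie inside $W^s(x)$, so its half-length is a lower bound for $r^s(x)$. The argument for $r^u$ on $N_\delta^-$ is then obtained by time-reversal, applying the same reasoning to $T^{-1}$ (whose backward singularity set is $S_{-1}$, and whose stable cone is what was the unstable cone of $T$).

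The core is an induction on $n$. Pick a candidate smooth stable curve $W$ through $x$ of length $c\delta$ (with $c \in (0,1)$ to be fixed). The inductive claim is that for every $n \ge 0$:
\begin{enumerate}
\item[(a)] $T^k W$ is a single smooth connected stable curve for $0 \le k \le n$;
\item[(b)] $|T^n W| \le c\delta\,\Lambda^{-n}$;
\item[(c)] $T^n W$ lies in the geodesic ball of radius $c\delta\,\Lambda^{-n}$ around $T^n x$.
\end{enumerate}
The base case $n=0$ is tautological. For the inductive step, (b) follows by integrating $\|D_y T(\dot y)\|\le \Lambda^{-1}\|\dot y\|$ along the stable curve $T^{n-1}W$, which uses only the uniform stable-cone contraction in \textbf{(H1)}(iii) and is \emph{not} spoiled by the singular blow-up in \textbf{(H2)}(iv), since that estimate controls only the unstable expansion. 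Given (b) and (c) at step $n$, the definition of $N_\delta^+$ yields $d(T^n x, S_1) \ge \delta\,\Lambda^{-n} > c\delta\,\Lambda^{-n}$, so $T^n W \cap S_1 = \emptyset$; hence $T^{n+1}W$ is smooth and connected, which gives (a) at step $n+1$, and (b), (c) propagate immediately. Since $d(T^n x, T^n y)\le |T^n W|\to 0$ exponentially for every $y\in W$, the whole curve $W$ lies inside the local stable manifold of $x$, so $r^s(x) \ge c\delta/2$, and the constant $c/2$ can be renamed $c$.

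The main obstacle is not the dynamical estimate but the rigorous construction of the initial candidate curve $W$ and the verification that it lies in the true local stable manifold. A direction in the stable cone need not be forward-invariant under $DT$ (the cone condition in \textbf{(H1)}(ii) is only that $D_xT(C^s_x)\supset C^s_{Tx}$), so one cannot naively integrate the cone field and pretend the result is a stable curve after one iteration. The standard fix is to use the genuine stable direction at $x$, which is defined (for $x \notin S_\infty$, in particular for $x\in N_\delta^+$) as the unique line in $\bigcap_{n\ge 0}D_{T^n x}T^{-n}(C^s_{T^n x})$, and which \emph{is} forward-invariant under $DT$ by construction. Integrating this line field in a smooth component of $M\setminus S_1$ yields a smooth curve $W$ of length $c\delta$ through $x$ whose tangent vectors are in $C^s$ at every iterate; the induction above then goes through. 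This is essentially the local stable manifold construction in the Katok--Strelcyn framework for hyperbolic systems with singularities, and all ingredients needed to make it quantitative (bounded curvature, distortion control) are supplied by \textbf{(H3)}(i)--(ii). The uniform constant $c$ then depends only on $\Lambda$, $C_{\bJ}$, $\bk_0$, and the transversality constant $\alpha_0$ between invariant manifolds and $S_{\pm 1}$, all of which are global.
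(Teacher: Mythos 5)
First, a remark on the comparison itself: the paper does not prove this lemma — it is quoted verbatim from \cite{CZ09} — so your proposal has to be measured against the standard argument there (see also \cite{CM}, Sect.~4.12), which is a \emph{pull-back} construction rather than your forward one.

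Your quantitative mechanism (images of a candidate curve contract like $\Lambda^{-n}$ while $d(T^nx,S_1)\ge\delta\Lambda^{-n}$, so the images never meet $S_1$) is the right one, and you correctly identify the obstruction: $DT$ does not preserve the stable cone. But the fix you propose — ``integrate the line field $E^s(y)=\bigcap_{n\ge0}D_{T^ny}T^{-n}(C^s_{T^ny})$ to get a smooth curve of length $c\delta$ through $x$'' — is where the argument genuinely breaks. That line field is defined only on $M\setminus S_\infty$, and $S_\infty$ is typically a dense countable union of curves; where defined, $E^s$ is merely measurable, not continuous, so it cannot be ``integrated'' in any ODE sense. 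Worse, a curve through $x$ of length $c\delta$ that is tangent to $E^s$ at \emph{every} of its points and all of whose forward images are stable curves is, by definition, a piece of $W^s(x)$ of length $c\delta$ — i.e., exactly the object whose existence the lemma asserts. So the step is circular, and appealing to Katok–Strelcyn does not help: Pesin-type constructions give stable manifolds almost everywhere with only measurable size control, whereas the entire content of this lemma is the uniform lower bound $r^s(x)>c\delta$ on all of $N_\delta^+$. Note also that your estimate (b) itself secretly uses forward invariance: \textbf{(H1)}(iii) gives $\|D_yT\dot y\|\le\Lambda^{-1}\|\dot y\|$ only when $D_yT\dot y$ again lies in $C^s_{Ty}$.

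The argument in \cite{CZ09} inverts your logic so that only the \emph{backward} invariance $D_{Tx}T^{-1}(C^s_{Tx})\subset C^s_x$ — which \textbf{(H1)}(ii) does guarantee — is needed. For each $n$ one takes a stable curve $\sigma_n$ through $T^nx$ with half-lengths $\asymp\delta\Lambda^{-n}$ (this exists because the cone field $C^s$ is continuous and $d(T^nx,S_0)\ge\delta\Lambda^{-n}$), and pulls it back: at each step one keeps the connected component through the orbit point of $T^{-1}$ applied to the previous curve minus $S_{-1}$. Every piece so obtained is automatically a stable curve, and $T^{-1}$ expands it by at least $\Lambda$. If a cut occurs at step $j$, the new endpoint $q$ lies in $S_1$, so its distance along the curve from $T^{n-j}x$ is at least $\delta\Lambda^{-(n-j)}$, and the subsequent $\Lambda^{n-j}$-fold expansion turns this into a half-length $\ge c\delta$ of the final curve $V_n\ni x$; if no cut occurs the half-length is $\ge\Lambda^n\cdot\delta\Lambda^{-n}=\delta$. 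The curves $V_n$ have uniformly bounded curvature by \textbf{(H3)}(i), so a subsequence converges to a curve of half-length $\ge c\delta$ through $x$ all of whose forward images are stable curves, i.e., to a piece of $W^s(x)$. You should restructure your proof along these lines; as written, the existence of the initial curve $W$ is an unproved assertion equivalent to the conclusion.
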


It follows from Lemma \ref{global} that
for any point $x\in N_{\delta}:=N_{\delta} \cap N_{\delta} $,
both $W^s(x)$  and $W^u(x)$ exist with $r^s(x)\geq c\delta$
and $r^u(x)\geq c\delta$. Note that $x$ may not belong to
$N^{\pm}_\delta$ even if $r^{s/u}(x)\geq c\delta$,
since $\Lambda^{-n}$ is just an upper bound, and $T^nx$ may approach
to $S_{\pm1}$ faster than that.
\vspace{.2cm}

The next result is also proved in \cite{CZ09}, whose proof  depends strongly on (\textbf{H4}).
\begin{lemma}\label{growth2}
There exists $\delta_0>0$, such that for any unstable manifold $W^u$ with
$|W^u|>\delta_0$,
 \beq\label{shortsu}
 \mu_{W^{u}}(r^s(x)<\eps)<C \eps^{\bs_0}, \text{ for any }\eps>0,
 \eeq
where $C>0$ is some uniform constant depending only on $\delta_0$.
\end{lemma}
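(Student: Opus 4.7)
The plan is to combine Lemma~\ref{global} with the Growth Lemma (Lemma~\ref{growth}) to reduce the assertion to bounding the $T^n_*\mu_{W^u}$-measure of an $(\eps\Lambda^{-n})$-neighborhood of $S_1$, and then to invoke \textbf{(H4)} to recover the sharp exponent~$\bs_0$. First, since $|W^u|>\delta_0$, the distortion bound \eqref{rhoWbd} makes $(W^u,\mu_{W^u})$ a standard pair, and viewed as a singleton standard family $\cG$, it satisfies $\cZ_r(\cG)=|W^u|^{-r}\le\delta_0^{-r}$ for every $r\in(0,\bs_0)$. By Lemma~\ref{global}, $r^s(x)<\eps$ forces $x\notin N_{\eps/c}^+$, hence there is some $n=n(x)\ge0$ with $d(T^n x,S_1)<(\eps/c)\Lambda^{-n}$. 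A union bound over $n$ therefore gives
\begin{equation*}
\mu_{W^u}\bigl(\{r^s<\eps\}\bigr)\;\le\;\sum_{n=0}^{\infty}(T^n_*\mu_{W^u})(U_{n,\eps}),\qquad U_{n,\eps}:=\bigl\{y\in M:d(y,S_1)<(\eps/c)\Lambda^{-n}\bigr\}.
\end{equation*}

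By Lemma~\ref{growth}(1), the iterates $T^n\cG$ satisfy $\cZ_r(T^n\cG)\le c\delta_0^{-r}\Lambda^{-\gamma_1 n}+C_z$, so they are $r$-proper uniformly in $n$ (after possibly enlarging $C_{\bp}$). The heart of the argument is then to establish, for every $r$-proper standard family $\cG'=(\cW',\nu')$, the estimate
\begin{equation}\label{eq:keyobstacle}
\nu'\bigl(\{d(\cdot,S_1)<\eta\}\bigr)\le C''\,\nu'(M)\,\eta^{\bs_0}\qquad\text{for all }\eta>0,
\end{equation}
with $C''$ depending only on $\cZ_r(\cG')$. I would prove \eqref{eq:keyobstacle} by splitting the components $W'_\alpha$ of $\cW'$ into those with $|W'_\alpha|<\eta$ (whose total $\nu'$-mass is controlled by Lemma~\ref{growth}(2) and is at most of order $\eta^r$) and the long components, on which the transversality of $S_1$ to the unstable cones (\textbf{(H2)(i)}) and the uniformly bounded number of singular curves on the boundary of each chamber (\textbf{(H2)(ii)}) combine with the density bound $\rho_{W'_\alpha}\lesssim|W'_\alpha|^{-1}$ to translate the global SRB bound \eqref{alpha} into the required pointwise estimate.

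Applying \eqref{eq:keyobstacle} with $\eta=(\eps/c)\Lambda^{-n}$ to each $T^n\cG$ and summing the resulting geometric series finishes the proof:
\begin{equation*}
\mu_{W^u}\bigl(\{r^s<\eps\}\bigr)\le C\sum_{n=0}^{\infty}(\eps\Lambda^{-n})^{\bs_0}\le C'\,\eps^{\bs_0}.
\end{equation*}
The main technical obstacle I anticipate is the extraction of the sharp exponent $\bs_0$ in \eqref{eq:keyobstacle}: Lemma~\ref{growth}(2) alone yields only $\eta^r$ for $r<\bs_0$, which after summation would produce the strictly weaker conclusion $C'\eps^r$. The sharp exponent can only come from \textbf{(H4)}, and the delicate step is to transfer this global $\mu$-based exponent to an arbitrary $r$-proper standard family, with a constant depending only on~$\cZ_r$.
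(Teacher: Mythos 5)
The paper does not actually prove Lemma \ref{growth2}; it cites \cite{CZ09} (see also the remark after \cite[Theorem 5.66]{CM}), and your skeleton --- the contrapositive of Lemma \ref{global}, a union bound over $n$, and the Growth Lemma applied to the pushed-forward singleton family --- is indeed the standard route taken there. However, the obstacle you single out at the end is not the real one. Because the initial family is a single unstable curve with $|W^u|>\delta_0$, you have $\cZ_{\bs_0}(\cG)=|W^u|^{-\bs_0}\le\delta_0^{-\bs_0}<\infty$, and Lemma \ref{growth}(1)--(2) are stated for all $r\in(0,\bs_0]$, endpoint included; the restriction $r<\bs_0$ is needed only for the full SRB family $\cE$ in part (3). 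So you may run the entire argument at $r=\bs_0$, and the ``short components'' half of your key estimate then produces exactly $\eta^{\bs_0}$ with no loss of exponent.

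The genuine gap is the ``long components'' half. There you need, for each individual long unstable curve $V_\alpha$, a bound of the form $m_{V_\alpha}(\{d(\cdot,S_1)<\eta\})\le C\eta^{\bs_0}$, and you propose to deduce it from the global hypothesis \eqref{alpha}. That deduction does not go through: \eqref{alpha} is an integrated statement about $\mu$ and cannot be upgraded to a uniform statement about conditional measures on arbitrary unstable curves by transversality and counting. Since $S_1$ may consist of countably many curves, a single long component can meet the $\eta$-neighborhoods of infinitely many of them; transversality gives length $O(\eta)$ per crossing, but the number of crossings is unbounded, and \textbf{(H2)}(ii) controls only the number of boundary curves of each component of $M\setminus S_1$, not the number of singular curves met by $V_\alpha$. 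The exponent $\bs_0<1$ in \eqref{alpha} exists precisely to encode the accumulation rate of these curves, and a per-curve version of it is an additional input (available in \cite{CZ09} and in the billiard models from the explicit structure of $S_1$), not a formal consequence of the $\mu$-version. Moreover, even granting such a per-curve bound, summing it over the components of $T^nW^u$ costs a factor of $\cZ_1(T^n\cG)$ or forces an interpolation that again degrades the exponent. The proof you are reconstructing avoids both issues by bounding, for each $n$, the set where $T^nx$ lies within $C\eps\Lambda^{-n}$ of an \emph{endpoint} of its component of $T^nW^u\setminus S_1$ --- a quantity controlled per curve by the density bound \eqref{rhoWbd} and by $\cZ_{\bs_0}$ of the subdivided family, hence by \textbf{(H5)} and the Growth Lemma. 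You should either restructure the argument along those lines or state explicitly the per-curve form of \textbf{(H4)} that you are actually invoking.
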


\begin{remark}
The relation in (\ref{shortsu}) has a `time reversal' counterpart, see the
remark after  \cite[Theorem 5.66]{CM}. These two equations guarantee that
there are plenty of ``long" stable (resp. unstable) manifolds along any
unstable  (resp. stable) manifolds, which is essential for the construction of
the hyperbolic magnet.
\end{remark}

\begin{remark}
Using (\ref{standard}) and the fact that the SRB
measure $\mu$ is $\bs_0${-}proper,  we know that that
$\ds \mu (M\backslash N^{\pm}_\delta)\leq C \delta^{\bs_0}$.
Therefore, the set of points with short stable/unstable manifolds
satisfies the following estimate: there exists $\tc >0$ such that
$\mu(r^{s/u}(x)<\delta)\leq \tc \delta^{\bs_0}$ for all $\delta>0$.
\end{remark}

Now let $d\in(0,1)$, pick $\delta_0$ small enough such that
$\mu(N_{10\delta_0/c})>d$. Pick an unstable manifold $W$ with $\mu_W(W\cap
N_{10\delta_0/c})>d$, and a $\mu_W$-density point $x_0\in W\cap
N_{10\delta_0/c}$. Define
\begin{equation}\label{hatgammas}
  \hat\Gamma^s=\{ W^s(y)\,|\, y\in W^u(x_0)\cap N_{10\delta_0/c}\}.
\end{equation}
Note that $\hat\Gamma^s$ is the collection of all maximal stable manifolds
along $W^u(x_0)\cap N_{\delta_0}$, which stick out both sides of $W^u(x_0)$
by at least $5\delta_0$. As  the length of stable manifolds in $\hat\Gamma^s$
may be very irregular, we need to chop off a portion to get our magnet.
\vspace{.2cm}

Let $\cU$ be a  {\it rectangular} shaped region such that $x_0$ is
almost the geometric center of $\cU$, and the boundary
 $\partial \cU$ consists of two unstable manifolds with
length $6\delta_0$ and two stable manifolds with length
$6\delta_0$. Accordingly,  the region $\cU$ can be viewed as a
rectangle centered at $x_0$ with dimensions
$6\delta_0\times 6\delta_0$.
\vspace{.2cm}

We say that an unstable manifold $W^u$
 \emph{fully u-crosses} $\hat\Gamma^s$, if
$W^u$ meets every stable manifold in $\hat\Gamma^s$.  Let $\hat\Gamma^u$ be
the collection of all maximal unstable manifolds $W^u(y)$ that fully $u$-cross
$\hat\Gamma^s$, with $y\in W^s(x_0)\cap\cU$.
Let $\Gamma^{u/s}=\hat\Gamma^{s/u}\cap\cU$, and then set
$\cR^*=\hat\Gamma^s\cap\hat\Gamma^u$.
By the bounded distortion property of the stable holonomy map $\bh$,  we get that there
exists $d_0>0$ such that
 \beq\label{Wd0}
 \mu_{W}(W\cap \cR^*)>d_0,
 \eeq
 for any $W\in \Gamma^u$.
Moreover, we have $\mu(\cR^*)>d_0^2$.
\vspace{.2cm}

\subsection{The Coupling Lemma}

 Next we review the {\it Coupling Lemma}, which was originally proved by Chernov and Dolgopyat
(cf. \cite{C06, CD, D01}) for dispersing billiards,
see also \cite[\S 7.12--7.15]{CM}.
The coupling lemma was
generalized in \cite{CZ09} to systems under the assumptions \textbf{(H1)--(H5)}
on proper families, and
then in \cite{SYZ} to time-dependent billiards.
\vspace{.2cm}

When performing the coupling algorithm on a given magnet $\cR^\ast$, one may
not couple the entire measure crossing the magnet at each return time. To
implement the idea, we use the concept of the {\it generalized standard family}.

 \begin{defn}\label{pseodugs}
 Let $(\cW,\nu)$ be a standard family, where $\cW\subset \Gamma^u$ is a measurable collection of unstable manifolds in $\Gamma^u$. Then we define $(\cW,\nu)|_{\cR^*}:=(\cW\cap \cR^*, \nu|_{\cR^*})$. For any $n\geq 0$, we call $(\cW_n,\nu_n):=T^{-n} ((\cW,\nu)|_{\cR^*})$ as a generalized standard family with index $n$.
 \end{defn}

Next we state  the Coupling Lemma \cite{CD,CM} for the induced
system $(F, \mu_M)$ using the concept of generalized standard families.

\begin{lemma}\label{coupling1}
Let  $\cG^i=(\cW^i, \nu^i)$, $i=1, 2$,  be two  $r$-proper families on $M$,
for some $r\in(0,\bs_0]$. For any $n\geq 1$,
 there exist two sequences of  generalized standard families $\{(\cW^i_n,\nu^i_n), n\geq 0\}$,  such that
  \beq\label{decomposeGi}
  \cG^i=\sum_{n=0}^{\infty}(\cW^i_n,\nu^i_n):=\Big(\bigcup_{n=0}^{\infty}\cW^i_n,\sum_{n=0}^{\infty}\nu^i_n\Big).
 \eeq
Moreover, they satisfy the following properties that for each $n\geq 0$:
\begin{itemize}
\item[(i)] \textbf{Proper return to $\cR^*$ at $n$.}\\
 \,\,\,\,\, Both $(\cW^1_n,\nu^1_n)$ and $(\cW^2_n,\nu^2_n)$ are  generalized standard families of index $n$;
  \item[(ii)] \textbf{Coupling $T^n_*\nu^1_n$ and $T^n_*\nu^2_n$ along stable manifolds in $\Gamma^s$.}\\
   \,\,\,\,\,\,\,For any measurable collection of stable manifolds $A\subset \Gamma^s$, we have $$T^n_*\nu_n^1(A)=T^n_*\nu^2_n(A).$$
 \item[(iii)] \textbf{Exponential tail bound for uncoupled measure at $n$.}\\
     \,\,\,\,\, There exist $C_{\bc}>0$ and $s \in (0, \bs_0]$ such that, for any $n\geq 1$,
  \beq\label{ctail1}
  \bar\nu_n^i(M)<C_{\bc}\Lambda^{-sn },
  \text{  for all  }n\ge 1,\eeq
  where $\bar\nu_n^i:=\sum_{k\geq n}\nu^i_k$ is the uncoupled measure at $n$-th step.
   \end{itemize}
\end{lemma}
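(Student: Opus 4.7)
The plan is to prove the Coupling Lemma by an inductive construction that at each step $n$ couples a definite fraction of the measure currently residing on unstable curves that fully $u$-cross the magnet $\cR^\ast$, and estimates the remaining uncoupled mass through the Growth Lemma. The proof follows the template developed by Chernov--Dolgopyat (see \cite{CD, CM, CZ09}), adapted to our generalized standard family framework.

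First I would set up the inductive scheme. Put $\cG^i_0 := \cG^i$ for $i=1,2$, which are $r$-proper by hypothesis. Given $\cG^i_n = (\tilde\cW^i_n, \tilde\nu^i_n)$ $r$-proper, iterate both forward by $T$ and decompose the image $T\cG^i_n$ into two parts: the \emph{matched} part $\cM^i_{n+1}$ supported on unstable curves that fully $u$-cross $\cR^\ast$, and the \emph{residual} part. Using Lemma \ref{growth2} together with \eqref{Wd0} and the bounded distortion property, on every curve $W\in\Gamma^u$ a uniform fraction $\ge d_0$ of the conditional measure lies in $\cR^\ast$. Via the absolutely continuous stable holonomy $\bh$ from (\textbf{H3})(iii) with bounded Jacobian, the conditional measures of $\cM^1_{n+1}$ and $\cM^2_{n+1}$ restricted to $\cR^\ast$ can be identified leafwise, so one may couple a fraction $\theta_0\in(0,1)$ (depending only on $d_0$, the holonomy Jacobian bound, and the minimal density ratio from \eqref{rhoWbd}) of the smaller mass, pulling back the coupled piece to define $(\cW^i_n,\nu^i_n) := T^{-n}\bigl(\bmu^i_{n+1}\bigr)|_{\cR^\ast}$ as generalized standard families of index $n$. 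The yet-uncoupled residual becomes $\cG^i_{n+1}$, and the Growth Lemma, Lemma \ref{growth}(1), guarantees that $\cG^i_{n+1}$ is still $r$-proper after sufficiently many burn-in iterations, since properness is preserved under forward iteration by Lemma \ref{properagain}(2).

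For property (ii) I would observe that by construction the two coupled pieces at step $n$ are, after pushing forward by $T^n$, defined to have equal mass on every measurable union of stable manifolds in $\Gamma^s$: the holonomy $\bh$ identifies points on different unstable curves lying on a common stable manifold, and we couple precisely by matching the conditional densities under this identification. The series decomposition \eqref{decomposeGi} holds because the total mass of each $\cG^i$ equals the sum of the coupled pieces plus the leftover (which, by the tail bound below, shrinks to zero).

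The tail bound (iii) is the heart of the argument. At each step $n$, the mass residing on curves shorter than a threshold $\delta_0$ is controlled by Lemma \ref{growth}(2) and is exponentially small in the properness constant; the remaining "long" part crosses $\cR^\ast$ with uniform fraction by \eqref{Wd0}; and of that, a uniform fraction $\theta_0$ gets coupled. Consequently the remaining uncoupled mass satisfies $\bar\nu^i_{n+1}(M) \le (1-\theta_1)\bar\nu^i_n(M)$ for some $\theta_1\in(0,1)$ depending only on $d_0$, $\theta_0$, and the $r$-proper constant $C_{\bp}$. Iterating gives the exponential decay $\bar\nu^i_n(M) \le C_{\bc}\Lambda^{-sn}$ with $s := -\log_\Lambda(1-\theta_1)$ and $C_{\bc}$ absorbing the initial masses $\nu^i(M) \le 1$.

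The main obstacle I expect is verifying that at each step a \emph{uniform} proportion $\theta_0$ of mass can be genuinely coupled, despite the repeated chopping by singularities. This requires simultaneously controlling three effects: (a) the fraction of the image measure falling on unstable curves that fully $u$-cross $\cR^\ast$ (rather than just meeting it partially), which requires the rectangular geometry of $\cU$ together with the long-stable-manifold guarantee from Lemma \ref{global}; (b) maintaining the standard-pair regularity \eqref{lnholder0} after restriction to $\cR^\ast$ and pullback by $T^{-n}$, using the dynamical H\"older control and the distortion bound \eqref{distor0}; (c) keeping $\cG^i_{n+1}$ $r$-proper so that the procedure can be iterated, which follows from Lemma \ref{properagain}(2) provided one subtracts only a measure whose complement retains a bounded $\cZ_r$-function.
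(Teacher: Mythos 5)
Your proposal reconstructs, from scratch, the iterative coupling algorithm of Chernov--Dolgopyat; the paper's actual proof does none of this work itself. It passes to the extended space $\widehat{\cW}^i=\cW^i\times[0,1]$ with product measure $\hat\nu^i=\nu^i\times dt$, invokes Lemma~15 of \cite{CZ09} to obtain a coupling map $\Theta$ and a coupling time $\Upsilon$, defines $(\cW^i_n,\nu^i_n)$ as the projections of the level sets $\Upsilon^{-1}(n)$, and quotes Eq.~(7.2) of \cite{CZ09} for the tail bound \eqref{ctail1}. So the two arguments are the same in spirit (your induction is precisely what sits behind the cited lemma), but yours attempts to supply the construction that the paper outsources.

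Judged as a self-contained argument, your sketch has one genuine gap and one imprecision. The gap: after you remove the coupled portion at step $n$ --- which is supported on a Cantor subset of each unstable curve ($W\cap\cR^\ast$ intersected with the matched set) --- the residual measure on $W$ is no longer absolutely continuous with a dynamically H\"older density satisfying \eqref{lnholder0}, so the residual object $\cG^i_{n+1}$ is not a standard family and neither Lemma~\ref{growth} nor Lemma~\ref{properagain} applies to it as stated. Your obstacle (b) addresses regularity of the \emph{coupled} piece, not of the leftover. This is exactly why the construction is performed on $\cW^i\times[0,1]$: one deletes a sub-rectangle $\{(x,t): x\in W\cap\cR^\ast,\ t\le \theta_0\}$ over the magnet, so that the surviving mass still fibers over genuine standard pairs (the full curve with reduced height plus the complementary $H$-components). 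Without this device, or an equivalent one, the inductive step does not close. The imprecision: the contraction $\bar\nu^i_{n+1}(M)\le(1-\theta_1)\bar\nu^i_n(M)$ cannot hold for every single $n$, because immediately after coupling the residual family need not be $r$-proper and must run for a fixed recovery time $m_0$ (your ``burn-in'') before a uniform fraction again crosses $\cR^\ast$; the correct statement is a contraction by $(1-\theta_1)$ per block of $m_0$ iterates, which still yields \eqref{ctail1} with $s$ adjusted accordingly. With the product-space modification and the per-block contraction, your argument becomes the standard proof of the cited Lemma~15.
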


\begin{proof}
Given  two  $r$-proper families $\cG^i=(\cW^i, \nu^i)$, $i=1,2$, on $M$,
let $\widehat{\cW}^i=\cW^i \times [0,1]$, on which the map $T$ can be extended
by $T(x,t) = (Tx, t)$. Let $\hat\nu^i = \nu^i\times dt$ be the corresponding product measure.
By Lemma 15 in \cite{CZ09}, there exist
a coupling  map $\Theta: \widehat{\cW}^1 \to \widehat{\cW}^2$
and a coupling time function $\Upsilon:  \widehat{\cW}^1 \to \bN$
such that
\begin{enumerate}
\item $\Theta_{\ast}(\hat\nu^1) = \hat\nu^2$,

\item for any $(x,t) \in \widehat{\cW}^1$, $T^{n}y \in \Gamma^s(T^n x)$, where $(y,s)=\Theta(x,t)$
and $n=\Upsilon(x,t)$.
\end{enumerate}
For each $n\ge 1$,
let $\widehat{\cW}^1_n=\Theta^{-1}(n)$, $\hat\nu^1_n$ be the restriction of $\hat\nu^1$ on $\widehat{\cW}^1_n$,
$\cW^1_n$ be the projection of $\widehat{\cW}^1_n$ and $\nu^1_n$ be the projection of $\hat\nu^1_n$
on $\cW^1_n$. Then $(\cW^1_n, \nu^1_n)$ is a generalized standard family for each $n\ge 1$.
Similarly we define $\cW^2_n$ and $\nu^2_n$, $n\ge 1$.
Then $T^n\nu^1_n$ and $T^n\nu^2_n$ are coupled by the stable manifolds in $\Gamma^s$, $n\ge 1$.
Let $\bar\nu_n^i:=\sum_{k\geq n}\nu^i_k$, $i=1, 2$. Then \eqref{ctail1} follows from \cite[Eq. (7.2)]{CZ09}.
This completes the proof.
\end{proof}

\vspace{.2cm}

\begin{remark}
Each part $(\cW^i_n,\nu^i_n)$ is supported on some Cantor set
of the unstable manifolds, instead of the whole manifolds.
However, note that the measure $\nu^i_n$
is the restriction of some standard family
to that Cantor set.
This is why we call such a family a {\it generalized} standard family.
\end{remark}

The lemma in the following tells us that, under
that assumptions \textbf{(H1)--(H5)},
the system admits a hyperbolic set with a generalized Markov structure.
\vspace{.2cm}

\begin{lemma}\label{thm:main1}
Suppose the map $T:M\to M$ satisfies
the assumptions \textbf{(H1)--(H5)}.
Then the hyperbolic set $\cR^*$ is the base of a generalized Markov partition with exponential small tails:
\begin{enumerate}
\item[(1)] $\cR^*$ has a decomposition into $s$-subsets $\cR^*=\cup_{n\geq 1} \cR^*_n$
    such that  $T^n\cR^*_n\subset\cR^\ast$ is a $u$-subset of
    $\cR^\ast$ for each $n\ge 1$.

\item[(2)]  There exist $C>0$ and $s\in (0,\bs_0]$ such that
the following holds for each $n\ge 1$:
    \beq\label{cRmbd}
    \sum_{k\geq n} \mu(\cR^*_k)\leq C \Lambda^{-sn}.
    \eeq
\end{enumerate}
\end{lemma}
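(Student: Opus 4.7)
My plan is to derive the Markov-type decomposition of $\cR^\ast$ as a corollary of the Coupling Lemma (Lemma \ref{coupling1}) applied to the SRB measure restricted to $\cR^\ast$. I would first check that the standard family $\cE_{\cR^\ast}=(\Gamma^u, \mu|_{\cR^\ast})$, disintegrated along the unstable leaves of $\cR^\ast$, is $r$-proper for some $r\in(0,\bs_0)$: the conditional densities agree, up to normalization, with the $u$-SRB densities \eqref{dens}, so they are dynamically H\"older, and every leaf of $\Gamma^u$ has length at least $6\delta_0$ by construction, so $\cZ_r(\cE_{\cR^\ast})\le (6\delta_0)^{-r}<C_{\bp}$.

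Applying the Coupling Lemma with $\cG^1=\cG^2=\cE_{\cR^\ast}$, I obtain a decomposition $\cE_{\cR^\ast}=\sum_{n\ge 1}(\cW_n,\nu_n)$. By property (ii) of Lemma \ref{coupling1}, the pushforward $T^n_\ast\nu_n$ is supported on stable manifolds in $\Gamma^s$, hence (up to a null set) $T^n\cW_n$ consists of unstable curves that fully $u$-cross $\Gamma^s$ and so land in $\cR^\ast$ as a $u$-subset. I would then define $\cR^\ast_n$ to be the $s$-saturation inside $\cR^\ast$ of $\mathrm{supp}(\nu_n)$. This yields a decomposition $\cR^\ast=\bigcup_{n\ge 1}\cR^\ast_n$ into $s$-subsets with $T^n\cR^\ast_n$ a $u$-subset of $\cR^\ast$, settling part (1). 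Part (2) then follows immediately from the uncoupled-mass bound \eqref{ctail1}, giving $\sum_{k\ge n}\mu(\cR^\ast_k)\le C_{\bc}\,\mu(\cR^\ast)\,\Lambda^{-sn}\le C\Lambda^{-sn}$.

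The main obstacle is passing from the generalized standard families produced by Lemma \ref{coupling1}, whose leaves are Cantor subsets of unstable manifolds, to an honest $s$-subset decomposition of $\cR^\ast$ satisfying the strict Markov property $T^n\cR^\ast_n\subset \cR^\ast$ as a $u$-subset. To handle this, I would couple leaf by leaf along stable fibers: at each time $n\ge 1$, identify the subfamily of $T^n\cW_n$ whose leaves properly cross $\Gamma^s$, couple a maximal stably-saturated slab to the corresponding portion of $\cR^\ast$ using the uniformly bounded holonomy Jacobian from \eqref{cjchb}, and iterate on the residual mass. The density lower bound \eqref{Wd0} guarantees that a positive fraction of the remaining unstable mass is coupled at each return, which together with the growth lemma (Lemma \ref{growth}) supplies the exponential tail required in \eqref{cRmbd}.
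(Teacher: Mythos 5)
Your proposal follows essentially the same route as the paper: apply the Coupling Lemma to two identical copies of the standard family carried by $\Gamma^u$, define $\cR^*_n$ as the stable saturation of the portion coupled at time $n$ (using that the two families coincide, so everything properly returning to the magnet is coupled), and read off the tail bound from \eqref{ctail1}. The only cosmetic differences are that the paper disintegrates $\mu$ over the full leaves of $\Gamma^u$ rather than over $\cR^*$ itself, and the leaf-by-leaf coupling you sketch in your last paragraph is not redone there, since it is already the content of Lemma \ref{coupling1}.
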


\begin{proof} [Proof of Lemma \ref{thm:main1}]

We apply  Lemma \ref{coupling1} to two identical copies of the standard family induced by
the SRB measure, and then construct the generalized Young tower based on this
coupling process. That is, let $\nu$ be the restriction of  the SRB measure $\mu$  on $\bigcup \Gamma^u$,
$\lambda^u$ be the factor measure of $\nu$ on $\Gamma^u$,
and $\cG^i:=(\Gamma^u,\nu)=\{(W,\mu_{W})_{W\in \Gamma^u}, \lambda^u\}$, $i=1, 2$,
be two identical copies of the induced standard family.  By Lemma \ref{coupling1}, for each $n\geq 1$,
we couple {\it everything}\footnote{This is due to the fact that the two families are the same. }
in $T^n \cG^i$ that properly return to $\cR^*$,
and get a decomposition  of $(\Gamma^u,\nu)$ into
{\it generalized} standard families $\{ (\cW_n,\nu_n), n\geq 1\}$,
such that $T^n \cW_n$ is a $u$-subset of $\cR^*$ for each $n\ge 1$.
Define $\cR_n=\cW^s_n\cap \Gamma^s$, where $\cW^{s}_n=\{W^s(x)\,:\, T^n x\in \Gamma^s\cap \cF^k\cW_n\}$.
Then we can check that
$$\cR^*=\cup_{n=1}^{\infty}\cR_n\,\,\,\,\,(\text{mod} \,0),$$
and it satisfies the following properties:\\
(1)  $T^{ n}(\cR_n)$  is a $u$-subset of $\cU^*$ and $\{\cR_n, n\geq 1\}$ are almost surely disjoint $s$-subsets of $\cU^*$ in the following sense: $\mu(\cR_m\cap\cR_n)=0$ for any $m\neq n$;\\
(2) Furthermore
   \beq\label{ctail4}\sum_{k=n}^{\infty} \mu(\cR_k)<C_{\bc}\Lambda^{-sn },\eeq
   where $C_{\bc}$ and $s>0$ are from (\ref{ctail1}).

This completes the proof.
\end{proof}

Note that the singularity sets  $S_{\pm 1}$  may contain countably many smooth curves.
Thus there can exist countably many $s$-subsets $\cR_{n,i}$, $i\geq 1$, such that $\cR_n=\cup_{i\geq 1}\cR_{n,i}$, and $T^n W^u(x)\in \Gamma^u $ properly crosses $\cU^*$ for any $x\in\hat\Gamma^s_{n,i}$.
Moreover, $T^n y$ and $T^n x$ belong to the same unstable manifold in $\Gamma^u$ for any $x,y\in \cR_{n,i}\cap W^u$.
\vspace{.2cm}

One can build a partition of $M$ by taking the union
of all forward iterates $T^k\cR^\ast$,
and extend the return time function on $\cR^\ast$ to the whole space $M$.
That is, $\tau(x)=n-k$ as the time needed to {\it enter} the base $\cR^\ast$ of the
tower for any $x\in T^k\cR^*_n$, $0\le k< n$ and
$n\ge1$. It follows from (\ref{cRmbd}) that
$$\mu(\tau>n)\leq C_1 \Lambda^{-s n},$$
for some uniform constant $C_1=C/(\Lambda^{s}-1)$.
This generalized Markov tower based on the
partition $\cR^*=\cup_{n\geq 1}\cR_n$ is in the same spirit of  \cite{Y98,Y99}.
One improvement here is that
we allow the minimal solid $s$-rectangle containing $\cR_n$
to consist of countably many minimal $s$-rectangles  $\cU^*_{n,i}$.
This property is due to the fact that  we allow the singular set of system to contain countably many singular curves,
since one unstable manifold may be cut into infinitely many small pieces, many of which may return to the rectangle $\cU^*$ simultaneously.
So from this point of view, our construction covers more classes of dynamical systems than those \cite{Y98,Y99}, including dispersing billiards with infinite horizon.\\

\section{Decay of Correlations}\label{decay1}

In this section we will study  the decay rates of correlations for the system
$(T, M, \mu)$  using the coupling method.

\subsection{Equidistribution properties}\label{equidistribution}

The following lemma describes the equidistribution property of the system.
Namely, for any two probability measures $\nu^i$, $i=1,2$, associated with
proper families $\cG^i=(\cW^i,\nu^i)$, the images $T^n \nu^i$, $i=1, 2$ approach
each other exponentially (in the week topology for a large set of observables). As
explained
in \cite{CD}, the equidistribution property effectively describes the
asymptotic independence between the present and the future of the system.

Recall  the constants $s=s(\mu,\nu)$ and $C_{\bc}>0$ given by Lemma \ref{coupling1}.
\vspace{.2cm}
\begin{lemma}\label{decayinf}
Let $p>1$, $\gamma\in[\bgamma_0,1]$, and $(\cW, \nu)$ be an $r$-proper family 
 with density $d\nu/d\mu\in\cH_{p}(\gamma)$.  Under condition (\ref{bound}
), then for any $f\in \cH_{p}(\gamma)$,
$$\left|\int_M f\circ T^n \,d\nu-\int_M f d\mu\right|
\leq (2C_{\bc}C(\gamma, s)\|f\|_{p,\gamma}+C_{g,\eps}(f))\Lambda^{-\xi n},$$
where $\xi=\min\big\{\frac{\gamma}{2}, \frac{s}{2}, \frac{s}{q}, \frac{\eps}{1+\eps}s\big\}$,
and $C(\gamma, s) = \frac{1}{1-\Lambda^{-\gamma}}+\frac{1}{1-\Lambda^{-\gamma}}$.
\end{lemma}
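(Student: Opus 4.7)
The plan is to estimate the difference by coupling the given proper family $(\cW,\nu)$ with the SRB standard family $\cE=(\cW^u,\mu)$, which is itself $r$-proper by Lemma \ref{properagain}(1). Applying the Coupling Lemma (Lemma \ref{coupling1}) to $\cG^1=(\cW,\nu)$ and $\cG^2=\cE$ yields decompositions $\nu=\sum_{k\ge 0}\nu^1_k$ and $\mu=\sum_{k\ge 0}\nu^2_k$ such that for each $k$ the pushforwards $T^k_\ast\nu^1_k$ and $T^k_\ast\nu^2_k$ are supported in the magnet $\cR^\ast$ and are matched, via the stable holonomy, to agree on every measurable collection of stable manifolds in $\Gamma^s$; moreover, $\bar\nu^i_k(M)\le C_{\bc}\Lambda^{-sk}$. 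Using $T$-invariance of $\mu$, I would then write
\[
\int f\circ T^n\,d\nu-\int f\,d\mu=\sum_{k=0}^{n}\Bigl[\int f\circ T^n\,d\nu^1_k-\int f\circ T^n\,d\nu^2_k\Bigr]+\int f\circ T^n\,d\bar\nu^1_{n+1}-\int f\circ T^n\,d\bar\nu^2_{n+1}.
\]

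For the coupled terms $k\le n$, I would pair each point with its stable-holonomy image. Since stable manifolds remain connected under all forward iterates of $T$, the two coupled images $T^n y$ and $T^n \theta_k(y)$ lie in a common component of $M\setminus S_1$ and on a single stable curve of length at most $c_M\Lambda^{-(n-k)}$. The piecewise H\"older bound on $f$ gives
\[
\Bigl|\int f\circ T^n\,d\nu^1_k-\int f\circ T^n\,d\nu^2_k\Bigr|\le c_M^\gamma\,\|f\|_{C^\gamma}\,\Lambda^{-\gamma(n-k)}\,\nu^1_k(M).
\]
I would then split the sum at $k=\lfloor n/2\rfloor$: on $k\le n/2$ use $\Lambda^{-\gamma(n-k)}\le\Lambda^{-\gamma n/2}$ and $\sum_k\nu^1_k(M)\le\nu(M)$, giving the rate $\gamma/2$; on $k>n/2$ use $\Lambda^{-\gamma(n-k)}\le 1$ and $\sum_{k>n/2}\nu^1_k(M)\le\bar\nu^1_{\lfloor n/2\rfloor}(M)\le C_{\bc}\Lambda^{-sn/2}$, giving the rate $s/2$. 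Both contributions telescope into geometric series whose sums produce the factor $C(\gamma,s)=\frac{1}{1-\Lambda^{-\gamma}}+\frac{1}{1-\Lambda^{-s}}$ together with $\|f\|_{C^\gamma}\le\|f\|_{p,\gamma}$.

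For the two uncoupled tails, I would apply H\"older's inequality asymmetrically, according to the two sides. Writing $d\bar\nu^1_{n+1}=\chi_{B_{n+1}}\,g\,d\mu$ with $g=d\nu/d\mu$, H\"older with exponents $(1+\eps,(1+\eps)/\eps)$ gives
\[
\Bigl|\int f\circ T^n\,d\bar\nu^1_{n+1}\Bigr|\le\Bigl(\int|f\circ T^n|^{1+\eps}g\,d\mu\Bigr)^{\frac{1}{1+\eps}}\bigl(\bar\nu^1_{n+1}(M)\bigr)^{\frac{\eps}{1+\eps}},
\]
which, by hypothesis \eqref{bound} and the tail bound, is controlled by a constant multiple of $C_{g,\eps}(f)\cdot\Lambda^{-\eps s n/(1+\eps)}$ (absorbing $C_{g,\eps}(f)^{1/(1+\eps)}$ into $C_{g,\eps}(f)$ by enlarging constants if needed). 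On the $\mu$-side, H\"older with the conjugate pair $(p,q)$ and $T$-invariance of $\mu$ give
\[
\Bigl|\int f\circ T^n\,d\bar\nu^2_{n+1}\Bigr|\le\|f\|_{L^p(\mu)}\bigl(\bar\nu^2_{n+1}(M)\bigr)^{1/q}\le C\,\|f\|_{p,\gamma}\,C_{\bc}^{1/q}\Lambda^{-sn/q},
\]
producing the rate $s/q$. Assembling the four rates and taking the minimum yields $\vartheta^n$ with $\xi$ as stated, and packaging the constants as in the coupled terms gives the factor $2C_{\bc}C(\gamma,s)\|f\|_{p,\gamma}+C_{g,\eps}(f)$.

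The main obstacle is the tail estimate for the unbounded observable: one cannot simply multiply $\bar\nu^i_k(M)$ by $\|f\|_\infty$. The hypothesis \eqref{bound} is tailor-made for this, providing the only workable control of $\int|f\circ T^n|^{1+\eps}\,d\nu$ uniformly in $n$, since $f\circ T^n$ accumulates the irregularities of singular orbits and can blow up; the H\"older step with conjugate exponents $(1+\eps,(1+\eps)/\eps)$ is what converts the decay of the tail mass into decay of the tail integral. The asymmetric choice of H\"older exponents on the two sides of the coupling---using $(1+\eps,(1+\eps)/\eps)$ against $\nu$ and $(p,q)$ against $\mu$---is exactly what forces the two separate rates $\eps s/(1+\eps)$ and $s/q$ to appear in $\xi$.
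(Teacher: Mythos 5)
Your proposal is correct and follows essentially the same route as the paper's proof: the same coupling of $(\cW,\nu)$ against $\cE=(\cW^u,\mu)$, the same treatment of the coupled terms via H\"older continuity of $f$ along contracted stable manifolds with the geometric sum split at $n/2$, and the same asymmetric H\"older estimates on the two uncoupled tails (exponents $(1+\eps,(1+\eps)/\eps)$ against $\nu$ using \eqref{bound}, and $(p,q)$ against the $T$-invariant $\mu$), yielding exactly the four rates in $\xi$. Your remark about absorbing $C_{g,\eps}(f)^{1/(1+\eps)}$ into the constant is in fact slightly more careful than the paper's own write-up.
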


\begin{proof} Let $(\cW, \nu)$ be the $r$-proper family,
We first unify the notations by denoting $(\cW^1,\nu^1):=(\cW,\nu)$,
and $(\cW^2,\nu^2):=(\cW^u, \mu)$. By the Coupling Lemma
\ref{coupling1}, we have a decomposition $\nu^i=\sum_{m=1}^n\nu_m^i+\bar\nu^i_n$ with
support $\supp (\nu_m^i)=\cW_m^i$, such that $T^{m} \cW_m^i$ is a
$u$-subset of $\cR$. Note that  the uncoupled measures satisfy
$\bar\nu^1_n(M)=\bar\nu^2_n(M)<C_{\bc}\Lambda^{-sn}$.
\vspace{.2cm}

Now for any $x\in W^u\subset \Gamma^u$, we choose $\bar x\in W^s(x)$, such
that $f(\bar x)=\max_{y \in W^s(x)} f(y)$ be the maximum value of $f$
along stable manifold $W^s(x)$.
\vspace{.2cm}

We first consider the coupled portions $\nu^1_m$ and $\nu^2_m$
when $m\leq n$. In this case the H\"{o}lder continuity of $f$ implies that for
$\mu$-a.e. $x\in M$,
$$|f\circ T^{n-m}(x)-f\circ T^{n-m}(\bar x)|\leq
\|f\|_{C^\gamma}\Lambda^{-\gamma (n-m)}.$$
Then we have,
\begin{align*}
|\int_{ \cW^1_m} f\circ T^n(x) d\nu_m^1(x)&-\int_{ \cW^2_m} f\circ T^n(y) d\nu_m^2(y)|
\leq |\int_{ T^m\cW^1_m} \left(f\circ T^{n-m}(x) -f\circ T^{n-m}(\bar x)\right)dT^m_*\nu_m^1(x)|\\
&+ |\int_{ T^m\cW^2_m} \left(f\circ T^{n-m}(y) -f\circ T^{n-m}(\bar x)\right)dT^m_*\nu_m^2(y)|\\
&+|\int_{ T^m\cW^1_m} f\circ T^{n-m}(\bar x)\ dT^m_*\nu_m^1(x)-\int_{T^m\cW^2_m} f\circ T^{n-m}(\bar x)\ dT^m_*\nu_m^2(y)|\\
&\leq 2C \|f\|_{C^\gamma}\cdot \Lambda^{-\gamma(n-m)}\cdot \mu_m(M) 
\le2C_{\bc}\|f\|_{C^\gamma} \Lambda^{-\gamma(n-m)-s m}.
\end{align*}
Here we use the fact that
$(T^m\cW^1_m,T^m_*\nu_m^1)$ and
$(T^m\cW^2_m,T^m_*\nu_m^2)$ are coupled at time $m$ and have the same measure.
In particular, the choices of $\bar x$ works for both families. 
\vspace{.2cm}

 Recall that  $\bar\nu_n^2$ is the remaining measure of $\nu^2=\mu$ after time $n$.
Then by H\"older
inequality and Lemma \ref{coupling1}, and Lemma \ref{coupling1} (ii), we have for $i=1,2$,
\begin{align*}
\left|\int_{ \bar{\cW}^i_n} f\circ T^n d\bar\nu_n^i\right|=\left|\int_{T^n \bar{\cW}^i_n} f\, dT^n \bar\nu_n^i\right|
&\leq \left(\int_{M} |f|^{p} dT^n_*\bar\nu_n^2 \right)^{\frac{1}{p}}\cdot
\left(T^n\bar\nu_n^2(M)\right)^{\frac{1}{q}}
\leq \|f\|_{p}\cdot C_{\bc}\Lambda^{-s n/q},
\end{align*}
where $q>1$ satisfies $\frac{1}{p}+\frac{1}{q}=1$. Here we use the assumptions
that the measure $\nu^2=\mu$ is $T$-invariant and the function $f\in \cH_p^+(\gamma)$.
\vspace{.2cm}

For the counterpart $\bar\nu_n^1$ of $\nu^1=\nu$, we have
\begin{align}
\left|\int_{ \bar\cW^1_n} f\circ T^n d\bar\nu_n^1\right|
&\leq \left(\int_{M} |f\circ T^n|^{1+\eps} d\bar\nu_n^1 \right)^{\frac{1}{1+\eps}}\cdot
\left(\bar\nu^1_n(M)\right)^{\frac{\eps}{1+\eps}} \nonumber\\
&\leq \left(\int_{M} |f\circ T^n|^{1+\eps} d\nu \right)^{\frac{1}{1+\eps}}\cdot
\left(\bar\nu^1_n(M)\right)^{\frac{\eps}{1+\eps}} \nonumber\\
&\le C_{g,\eps}(f)\cdot
\left(\bar\mu_n(M)\right)^{\eps/({1+\eps})}
\leq C_{g,\eps}(f)\cdot C_{\bc} \Lambda^{-s n\eps/(1+\eps)}.
\end{align}\vspace{.1cm}

Combining the three estimations in the above, we get that for all $n\geq 1$,
\begin{align*}
&\left|\int_M f\circ T^{n} d\nu-\int_M f\, d\mu\right|
=\left|\int_M f\circ T^n d\nu^1-\int_M f\circ T^n\, d\nu^2\right|\\
\leq&\sum_{m=1}^{n} | \left(\int_{ \cW^1_m} f\circ T^n d\nu^1_m-\int_{
\cW^2_m} f\circ T^n\, d\nu^2_m\right)|
+\left|\int_{ \bar{\cW}^1_n} f\circ T^n d\bar{\nu}^1_n\right|+\left|\int_{
\bar{\cW}^2_n} f\circ T^n\, d\bar{\nu}^2_n\right|\\
\leq&2C_{\bc}\|f\|_{C^\gamma}\cdot \Big(\sum_{m=1}^{n} \Lambda^{-\gamma(n-m) -sm}\Big)
+\|f\|_{p}\cdot C_{\bc}\Lambda^{-s n/q}
+C_{g,\eps}(f)\cdot C_{\bc} \Lambda^{-s n\eps/(1+\eps)}\\
\leq&2C_{\bc}\|f\|_{C^\gamma}
\Big(\frac{\Lambda^{-n\gamma/2}}{1-\Lambda^{-\gamma}}+\frac{\Lambda^{-ns/2}}{1-\Lambda^{-\gamma}}\Big)
+\|f\|_{p}\cdot C_{\bc}\Lambda^{-s n/q}
+C_{g,\eps}(f)\cdot C_{\bc} \Lambda^{-s n\eps/(1+\eps)}.
\end{align*}
Letting $\xi=\min\big\{\frac{\gamma}{2}, \frac{s}{2}, \frac{s}{q}, \frac{\eps}{1+\eps}s\big\}$
and $C(\gamma, s) = \frac{1}{1-\Lambda^{-\gamma}}+\frac{1}{1-\Lambda^{-\gamma}}$, 
we get
that
\[
|\int_M f\circ T^{n+\bar n} d\nu-\int_M f\, d\mu|
\leq 2(C_{\bc}C(\gamma, s) \|f\|_{p,\gamma}+C_{g,\eps}(f)\Lambda^{-\xi n},
\]
for  any $n\ge 1$.
This completes the proof of the lemma.
\end{proof}
\vspace{.1cm}

\subsection{Proof of Theorem \ref{main2}}
Let $p,q \in (1,\infty]$,  with $\frac{1}{p}+\frac{1}{q}=1$, and $f,g\in \cH_{p}(\gamma)$.
We start with the case when $g$ is non-negative, and $\mu(g)>0$.
Consider the  standard family $\cE=(\cW^u,\mu)$.
Let $n\ge 1$.
In the following we make a partition of $n$ into  two subintervals of length $\lfloor n/2 \rfloor$ and $n- \lfloor n/2 \rfloor$.
Let $\bar g_n(\alpha)=\mathbb{E}(g\circ T^{-n}|W_{\alpha})$ be the conditional expectation of $g\circ T^{-n}$ on $W_{\alpha}$ with respect to $\mu$.
By the  H\"older continuity of $g$, we have
$$\sup_{\alpha\in \cA^u}\sup_{x\in W_{\alpha}} |g\circ T^{-n}(x) -\bar g_{n}(\alpha)|\leq c_M\|g\|_{C^{\gamma}}\Lambda^{-n\gamma}.$$
Here the constant $c_M>0$ is the upper bound of the length of all the unstable manifolds
that is introduced at the end of \S \ref{assumptions} (see Remark 1). This implies that
\begin{align*}
|  \Cov_n(f,g)| &=| \Cov_{n- \lfloor n/2 \rfloor}(f,g\circ T^{-\lfloor n/2 \rfloor},T)| \\
&=| \Cov_{n- \lfloor n/2 \rfloor}(f,\bar g_{\lfloor n/2 \rfloor}, T)+ \Cov_{n- \lfloor n/2 \rfloor}(f,(g\circ T^{-\lfloor n/2 \rfloor}-\bar g_{\lfloor n/2 \rfloor}), T)|\\
&\leq | \Cov_{n- \lfloor n/2 \rfloor}(f,\bar g_{\lfloor n/2 \rfloor},T)| +c_M\mu(|f|)\cdot\|g\|_{C^{\gamma}}\Lambda^{-\lfloor n/2 \rfloor\gamma}.
\end{align*}

Next we consider $ \Cov_{n- \lfloor n/2 \rfloor}(f,\bar g_{\lfloor n/2 \rfloor},T)$. For any $r\in [\bs_0/2,\bs_0])$,
Let $\chi=\frac{\gamma_0\ln \Lambda}{r}$, 
$\cA_n'=\{\alpha\in \cA^u\,:\, |W_{\alpha}|\geq e^{-n \chi/2}\}$,
and $\cW_n'=\{T^{\lfloor n/2 \rfloor}W_{\alpha}: \alpha\in \cA_n' \}$.
Then the compliment of $\cA_n'$ satisfies
\beq\label{smallalpha}
\lambda^u((\cA_n')^c)=\mu(|W_{\alpha}|< e^{-n\chi}, \alpha\in \cA^u)\leq C_{\bp} e^{-r n \chi }
=C_{\bp}\Lambda^{- n \gamma_0},
\eeq
since $\cE$ is $r$-proper.

For any $\alpha\in \cA_n'$, 
we have $\cZ_{r}(W_{\alpha},\mu_{\alpha})\leq  e^{r n\chi/2}$ since $|W_{\alpha}|\geq  e^{-n\chi/2}$. 
Then Lemma \ref{properagain} implies that $T^{\lfloor n/2 \rfloor}(W_{\alpha}, \mu_{\alpha})$ is an $r$-proper family for any $\alpha\in \cA_n'$.
Let $\nu_n'=\int_{\cA_n'}\bar g_{\lfloor n/2 \rfloor} T^{\lfloor n/2 \rfloor}_*\mu_{\alpha} d\lambda^u(\alpha)$.
Note that
the function $\bar g_{\lfloor n/2 \rfloor}$ is constant on $T^{\lfloor n/2 \rfloor}W_{\alpha}$, $\alpha\in \cA_n'$.
It follows that $(T^{\lfloor n/2 \rfloor}\cW_n', \nu_n')$ is $r$-proper.
Applying Lemma \ref{decayinf} to the $r$-proper family $(T^{\lfloor n/2 \rfloor}\cW_n', \nu_n')$, we have
\begin{align*}
 \left|\int_{\cW_n'} f\circ T^{n-\lfloor n/2 \rfloor} \,d\nu_n'  -\mu(f) \right|
&\le  \left|\int_{\cW_n'} f\circ T^{n-\lfloor n/2 \rfloor} \,d\nu_n'  -\mu(f)\cdot \nu_n'(M) \right| 
+ C\mu(f)\cdot \mu(g)\cdot\Lambda^{- n \gamma_0} \\
&\leq [2C_{\bc}C(\gamma, s)\mu(g)\|f\|_{p,\gamma} +C_{g,\eps} (f)+ C\mu(f)\mu(g)]\Lambda^{-\frac{\xi n}{2}},
\end{align*}
for all $n\ge 1$.

We define $d\lambda_g(\alpha):=\bar g_{\lfloor n/2 \rfloor} d\lambda^u(\alpha)$, for any $\alpha\in (\cA_n')^c$.
 Thus by (\ref{bound}), we get for $n\geq 1$,

\begin{align}\label{Ng12}
&|\int_{\alpha\in (\cA_n')^c}  \int_{W_{\alpha}} \bar g_{\lfloor n/2 \rfloor}(\alpha)  f\circ T^{n-\lfloor n/2 \rfloor} \,d\mu_{\alpha} \,d\lambda^u(\alpha)|\nonumber\\
&=|\int_{\alpha\in \cA}  \int_{W_{\alpha}}   f\circ T^{n-\lfloor n/2 \rfloor} \cdot \chi_{(\cA_n')^c}(\alpha)\,d\mu_{\alpha}\,d\lambda_g(\alpha)|\nonumber\\
&\leq \left(\int_M  |f|^{1+\eps}\circ T^{n } \cdot  g(x)  \, d\mu\right)^{\frac{1}{1+\eps}} \lambda_g((\cA_n')^c)^{\frac{\eps}{1+\eps}}\nonumber\\
&\leq C_{g,\eps} (f) \Lambda^{-\frac{q n\eps \gamma_0}{1+\eps}},
\end{align}
where we used $\chi=\frac{\gamma_0\ln \Lambda}{r}$ in the last step.

Combining the above estimations, we get for $n\geq 1$,
\begin{align*}
|  \Cov_n(f,g)| 
&\leq C_{\eps,\kappa} K_f \Lambda^{-\frac{q n\eps \gamma_0}{1+\eps }}
+c_M\mu(|f|)\cdot\|g\|_{C^{\gamma}}\Lambda^{-\lfloor n/2 \rfloor\gamma} \\
&+ [2C_{\bc}C(\gamma, s)\mu(g)\|f\|_{p,\gamma} + C_{g,\eps} (f)+ C\mu(f)\mu(g)]\Lambda^{-\frac{\xi n}{2}}\\
&\leq [ 2C_{g,\eps} (f)+(2C(\gamma, s)C_{\bc}+c_M+C_{\bp})  (\mu(|f|)+\|f\|_{p,\gamma} )(\mu(g)+\|g\|_{C^{\gamma}} ) ] \vartheta^n,
\end{align*}
where $\vartheta=\max\{\Lambda^{-\frac{q \eps \gamma_0}{1+\eps }}, \Lambda^{-\frac{\xi }{2}}, 
\Lambda^{-\gamma/2}\} $.

In the case that $g$ is not non-negative,
we let $g=g^+ -g^- $ be the decomposition of $g$ into its positive and negative
parts. Then the above estimation holds for both $g^+ $ and $g^- $. Therefore we have for $n\geq N_g$,
\begin{align*}
&|\int_{M}f\circ T^n\cdot g\,d\mu-\mu(f)\cdot\mu(g)|
=|\int_{M}f\circ T^n\cdot
(g^{+}-g^{-})\,d\mu-\mu(f)\cdot\mu(g^{+}-g^{-})|\\
\le&|\int_{M}f\circ T^n\cdot g^{+}\,d\mu-\mu(f)\cdot\mu(g )|+
|\int_{M}f\circ T^n\cdot g^{-}\,d\mu-\mu(f)\cdot\mu(g )|\\
\leq&  [ 2C_{g,\eps} (f) +4(2C(\gamma, s)C_{\bc}+c_M+C_{\bp})\|f\|_{p,\gamma}\|g\|_{p,\gamma} ]\vartheta^n.
\end{align*}

For $f,g\in \cH_{\infty}(\gamma)$, one can check that (\ref{Ng12}) holds  for $n\geq 1$. Thus we have
$$|\int_{M}f\circ T^n\cdot g\,d\mu-\mu(f)\cdot\mu(g)|\leq C(\gamma) \|f\|_{\infty,\gamma}\|g\|_{\infty,\gamma}\vartheta^n,$$ for some constant $C(\gamma)>0$, and $n\geq 1$.
This completes the proof.

\section{Proof of Theorem \ref{main1}}

In this section our dynamical system $(M, T, \mu)$ satisfies the assumptions 
(\textbf{H1})--(\textbf{H6}).
\subsection{The standard family associated to a density function in $\cH_{\kappa,p}(\gamma)$.}
In this subsection, we assume $p\in(1,\infty]$ and $\gamma\in[\bgamma_0,1]$, $g\in \cH_{\kappa,p}(\gamma)$ is
a nonnegative, dynamically H\"older function with $0<\mu(g)<\infty$.
We have shown that  $g$  induces a standard family $\cG_{g}=(\cW,\nu)$, with $d\nu=gd\mu$. Moreover $g\circ T^{-1}$ generates $T\cG_g=(T\cW, T_*\nu)$.
Now we will investigate the $\cZ$-function of $T\cG_{g}$, and show that ${g\circ T^{-1}}$
leads to an $r$-standard family with $r\leq \bs_0$.
The main reason we consider $T\cG_{g}$ instead of $\cG_g$ is that the former can be $r$-proper even the latter is not, mainly because of the one-step expansion property.
We start with  a technical lemma that is useful later when defining the constant $C_{\bp}$ in \eqref{Cp2}
and when estimating $\cZ$-functions in Eq.~\ref{normest}. 
Recall the constants $b$ and $\bs$ given  in {\bf (H6)}.

\begin{lemma}\label{convergerp}
The series $A(r):=\sum_{n\geq 1} n^{-(\bs+2)+r(2-b)+\kappa/p}$ is convergent for every $r\in (0, \bs_0]$
when $b\ge 2$, and for every $r\in (0, \bs_0)$ when $b\in [1, 2)$. 
\end{lemma}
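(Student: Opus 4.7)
The plan is to apply the standard $p$-series criterion: the series $\sum_{n\geq 1} n^{-\alpha}$ converges if and only if $\alpha > 1$. Applied here, this means I need to verify
\[
(\bs+2) - r(2-b) - \kappa/p \;>\; 1,
\]
equivalently, $r(2-b) + \kappa/p < \bs + 1$. Since $\kappa \in [0,p]$ by the definition of $\cH_{\kappa,p}(\gamma)$, we have $\kappa/p \le 1$, so it suffices to show $r(2-b) < \bs$ (with appropriate strictness in each case).

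First I would dispatch the easy case $b \ge 2$. Here $2-b \le 0$, so $r(2-b) \le 0$ for every $r > 0$, and hence
\[
r(2-b) + \kappa/p \;\le\; \kappa/p \;\le\; 1 \;<\; 1 + \bs,
\]
using $\bs > 0$ from \textbf{(H6)}(i). This gives convergence for every $r \in (0, \bs_0]$, which matches the claim.

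The case $b \in [1,2)$ is where the sharp bound in \textbf{(H6)} enters. Now $2 - b > 0$, and the second part of condition \eqref{assumpb} reads $\bs \ge \bs_0(2-b)$. For any $r \in (0,\bs_0)$, strict monotonicity gives
\[
r(2-b) \;<\; \bs_0(2-b) \;\le\; \bs,
\]
and combining with $\kappa/p \le 1$ yields $r(2-b) + \kappa/p < \bs + 1$, which again places the exponent strictly below $-1$. Note that the strict inequality $r < \bs_0$ is exactly what the statement allows, and is essential: at the endpoint $r = \bs_0$ the bound $r(2-b) \le \bs$ could be saturated, in which case the series would only be borderline divergent.

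There is no real obstacle here — this is a pure bookkeeping lemma distilling the role of the two clauses in \eqref{assumpb}. The only thing to be careful about is tracking which inequalities are strict: the strictness in Case 2 comes from $r < \bs_0$ (not from the assumption $\bs \ge \bs_0(2-b)$, which may be an equality), while in Case 1 the strictness comes from $\bs > 0$.
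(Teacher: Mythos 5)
Your proof is correct and follows essentially the same route as the paper's: both verify that the exponent $h(r)=(\bs+2)-r(2-b)-\kappa/p$ exceeds $1$ via the $p$-series test, splitting on the sign of $2-b$ and using $\kappa/p\le 1$ together with the two clauses of \eqref{assumpb}. Your remarks on where the strictness comes from in each case match the paper's evaluation of $h$ at $r=0$ and $r=\bs_0$ respectively.
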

\begin{proof}
Let $h(r)=(\bs+2)-r(2-b)-\kappa/p$.
We divide the proof into two cases:
\begin{enumerate}  
\item[(1).] when $b \ge 2$: then for any $r\in (0, \bs_0]$,
$$h(r) \ge h(0) = \bs+2-\kappa/p \ge \bs +1 >1,$$ 
since $\kappa/p\leq 1$.  Therefore, the series  $A(r):=\sum_{n\geq 1} n^{-h(r)}$ converges for $r\in (0, \bs_0]$.

\item[(2).] when $b\in [1,2)$: then for any $r\in (0, \bs_0)$,
Then we have 
$$h(r) > h(\bs_0)=1+\bs-\bs_0(2-b) \ge 1,$$
\end{enumerate}
since $\bs \ge \bs_0(2-b)$ by (\ref{assumpb}). 
Therefore, the series $A(r):=\sum_{n\geq 1} n^{-h(r)}$ converges for $r\in (0, \bs_0)$.

Collecting terms, we complete the proof.
\end{proof}

Next we  reintroduce our definition of $r$-proper families for systems satisfying the new
hypothesis (\textbf{H6}). 
Let   $c>0$, $C_z>0$ be the
constants given by Lemma \ref{growth}.  
 We now define\footnote{The reason for this choice will be clear when estimating $\cZ_{\bs_0}(\cE)$ in 
 \eqref{Zs0E}. }
\beq\label{Cp2}
C_{\bp}(r)=\max\Big\{\cfrac{C_z}{1-c\Lambda^{-\gamma_0}}+ \frac{ C_A}{\bs+1-\bs_0(2-b)},\,\,\,
C_A(1+A(r))\Big\}
\eeq
for each $r\in (0, \bs_0]$.  Note that the term $\cZ_r(\cE)$ is left out intentionally from \eqref{Cp2}.

A potential problem is that
{it is possible to have } $\lim_{r\to\bs_0} C_{\bp}(r)=\infty$. To have a uniform control of standard families, we need to fix an $r_0\in (0,\bs_0]$ by
\begin{align}\label{defnr0}
r_0= \begin{cases}\bs_0, & \text{  when  } b\geq 2; \\ \frac{1}{2(b+\bs)}+\frac{\bs_0}{2},  &\text{ when } b\in [1,2).
\end{cases}
\end{align}
It follows from  (\ref{assumpb}) that $r_0 < \bs_0$ when $b \in [1, 2)$. 
Therefore, $C_{\bp}:=C_{\bp}(r_0) <\infty$ by Lemma~\ref{convergerp}.
An $r$-standard family $\cG$  is called \emph{$r$-proper} if $\cZ_r(\cG)<C_{\bp}$.

It follows from the definition that $\cZ_r(\cG)\leq \cZ_{r_0}(\cG)$  for any  $0< r \leq r_0$. 
Therefore, $\cG$ being $r_0$-proper implies that it is $r$-proper  for all  $0< r \leq r_0$.

\begin{lemma}\label{rproperg}
Let $p>1$, and $g\in \cH _{\kappa,p}(\gamma)$ be a probability density function. 
Define $\bar g(x)=\mathbb{E}(g|D_n)$  for any $x\in D_n$ and $n\ge 1$.
Then for $r\in (0, \bs_0)$:
\begin{enumerate}
\item[(1)] $T\cG_{\bar g}$ is $r$-standard and  
$\cZ_r(T\cG_{\bar g})\leq  C_{\bp}(r) K_g$.

\item[(2)] $T\cG_{g}$ is $r$-standard and 
$\cZ_r(T\cG_{g})\leq  C_{\bp}(r)(K_g+\|g\|_{\gamma})$.

\item[(3)] $T^{n+1}\cG_{g}$ is  $r_0$-proper
for $n\geq N_{g}:= \lceil \frac{1}{\gamma_1}\log_{\Lambda}^{+}(K_g+\|g\|_{\gamma}) \rceil$. 
In particular, $T\cG_{g}$ is $r_0$-proper if  $K_g+\|g\|_{\gamma}\leq 1$.

\item[(4)]  $\cE$ is $\bs_0$-proper.

\end{enumerate}
\end{lemma}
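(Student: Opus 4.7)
The plan is to establish the four parts in order, as the later parts build on the earlier ones. The first two reduce to direct computations of $\cZ_r$ using \textbf{(H6)}, while parts (3) and (4) follow as applications. For part (1), I would split the integral as
\[
\cZ_r(T\cG_{\bar g}) \;=\; \sum_{n\geq 1}\int_{\cA_n}\frac{\bar g\bigl|_{D_n}}{|TW_{\alpha}|^{r}}\,d\lambda^u(\alpha).
\]
On each $D_n$, $\bar g$ is constant and, using $g\in \cH_{\kappa,p}(\gamma)$, is at most $K_g\, n^{\kappa/p}$. The factor measure has total mass $\lambda^u(\cA_n)=\mu(D_n)\leq C_0 n^{-2-\bs}$ by \textbf{(H6)(i)}. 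The ratio bound in \textbf{(H6)(ii)} gives $|TW_{\alpha}|\geq \mu(D_n)\,n^{\bs+b}/C_A$, so the $n$-th block is bounded by $K_g\,C_A^{r}\,\mu(D_n)^{1-r}\,n^{\kappa/p-r(\bs+b)}$. Inserting $\mu(D_n)\leq C_0 n^{-2-\bs}$ collapses the exponent to $\kappa/p-(2+\bs)+r(2-b)$, producing precisely the series $A(r)$ from Lemma~\ref{convergerp}. The defining inequality $C_{\bp}(r)\geq C_A(1+A(r))$ then delivers $\cZ_r(T\cG_{\bar g})\leq C_{\bp}(r)K_g$.

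Part (2) is a perturbation of part (1). Writing $g=\bar g+(g-\bar g)$ and invoking the H\"older continuity of $g$ on each connected component $D_n$ (which has diameter bounded by a universal constant depending on $M$), I obtain $\|g-\bar g\|_{L^{\infty}(D_n)}\leq C\|g\|_{C^{\gamma}}$ uniformly in $n$. The additional contribution to $\cZ_r(T\cG_g)$ from $g-\bar g$ is then at most $C\|g\|_{C^{\gamma}}\cZ_r(T\cE)$, and $\cZ_r(T\cE)$ is itself bounded by applying part (1) with $g\equiv 1$, $\kappa=0$, $K_g=1$; combining gives the claimed bound. For part (3), Lemma~\ref{growth}(1) yields
\[
\cZ_r(T^{n+1}\cG_g) \;\leq\; c\,\cZ_r(T\cG_g)\,\Lambda^{-\gamma_1 n}+C_z \;\leq\; c\,C_{\bp}(r)(K_g+\|g\|_{\gamma})\,\Lambda^{-\gamma_1 n}+C_z,
\]
and the choice $n\geq N_g=\lceil\frac{1}{\gamma_1}\log_{\Lambda}^{+}(K_g+\|g\|_{\gamma})\rceil$ kills the growth factor $(K_g+\|g\|_{\gamma})\Lambda^{-\gamma_1 n}\leq 1$, leaving $\cZ_r(T^{n+1}\cG_g)\leq cC_{\bp}(r)+C_z$, which is at most $C_{\bp}(r)$ by the first clause in the definition of $C_{\bp}(r)$.

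Part (4), that $\cE=(\cW^u,\mu)$ itself is $\bs_0$-proper, is the main obstacle. The naive attempt to mimic part (1) fails because \textbf{(H6)(ii)} furnishes only an \emph{upper} bound on $|W_{\alpha}|$, whereas bounding $|W_{\alpha}|^{-\bs_0}$ pointwise requires a lower bound. My plan is to estimate $\cZ_{\bs_0}(\cE)$ via the distribution of short unstable manifolds, rewriting it as
\[
\cZ_{\bs_0}(\cE) \;=\; \int_0^{c_M}\bs_0\,t^{-\bs_0-1}\,\mu(\{|W_{\alpha}|<t\})\,dt,
\]
and bounding $\mu(\{|W_{\alpha}|<t\})$ by splitting over $n$: for $n\geq (C_A/t)^{1/(\bs+b)}$, the estimate $|W_{\alpha}|\leq C_A n^{-\bs-b}$ forces every piece to be short, so this range contributes at most the tail $\sum_{n\geq N}\mu(D_n)\lesssim N^{-1-\bs}$; for smaller $n$ one appeals to the standard short-manifold tail from the Growth Lemma. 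The critical integral reduces to a series essentially of shape $\sum_n n^{\bs_0(2-b)-(2+\bs)}$, whose convergence at the threshold is ensured precisely by the condition $\bs\geq \bs_0(2-b)$ of (\ref{assumpb}) when $b\in[1,2)$, while for $b\geq 2$ the exponent is automatically $<-1$. The main delicacy lies exactly here: unlike the subcritical case $r<\bs_0$ covered by Lemma~\ref{growth}(3), the classical Growth Lemma's tail estimate is marginal at $\bs_0$, and one must exploit the extra information in \textbf{(H6)} to close the estimate at the borderline. Comparison with the definition of $C_{\bp}=C_{\bp}(r_0)$ then yields $\cZ_{\bs_0}(\cE)<C_{\bp}$.
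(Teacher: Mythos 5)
Parts (1)--(3) of your proposal follow the paper's proof essentially verbatim: the interpolation $|TW_{\alpha}|^{-r}\mu(D_n)=\bigl(\mu(D_n)/|TW_{\alpha}|\bigr)^{r}\mu(D_n)^{1-r}$ combined with (\ref{part}) and (\ref{alpha1}) yields the series $A(r)$ and hence (1); the comparison $|g-\bar g|\le \|g\|_{\gamma}\diam(D_n)^{\gamma}$ costs an extra $\|g\|_{\gamma}\cZ_r(\cE)$ and gives (2); and (3) is the Growth Lemma (\ref{firstgrowth}) together with the first clause in the definition (\ref{Cp2}) of $C_{\bp}(r)$.

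The problem is part (4). Your premise that ``mimicking part (1) fails'' is mistaken, and the detour you take instead has a gap. Since $\mu$ is $T$-invariant, $\cZ_r(\cE)=\cZ_r(T\cE)$, and $T\cE$ is exactly a family of the form treated in part (1) with $g\equiv 1$, $\kappa=0$, $K_g=1$: one estimates $\sum_n\int_{\cA_n}|TW_{\alpha}|^{-r}d\lambda$ using the lower bound $|TW_{\alpha}|\ge \mu(D_n)n^{\bs+b}/C_A$ from (\ref{part}), obtaining $\cZ_{\bs_0}(\cE)\le C_A\sum_{n}n^{-(\bs+2)+\bs_0(2-b)}\le C_A/(\bs+1-\bs_0(2-b))\le C_{\bp}$; convergence is immediate for $b\ge 2$ and follows for $b\in[1,2)$ from $b+\bs>1/\bs_0$ (or from $\bs\ge\bs_0(2-b)$) in (\ref{assumpb}). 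This is the paper's argument, and it is the same computation as (1), just pushed to the endpoint $r=\bs_0$, which is exactly what the second clause of (\ref{Cp2}) and Lemma \ref{convergerp} are designed for. Your substitute argument does not close: in the small-$n$ regime you invoke ``the standard short-manifold tail from the Growth Lemma,'' but (\ref{standard}) only gives $\mu(\{|W_{\alpha}|<t\})\le C t^{r}$ for $r<\bs_0$ (and at $r=\bs_0$ it is circular, since it presupposes finiteness of the very quantity $\cZ_{\bs_0}(\cE)$ you are bounding), and $\int_0^{c_M}t^{-\bs_0-1}\cdot t^{r}\,dt$ diverges for every such $r$. To make your layer-cake route work one must use the \textbf{(H6)} bounds in \emph{both} regimes, at which point the computation collapses back to the paper's series. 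Note also that the maximal unstable manifolds of $\cE$ are not indexed by $n$ and satisfy only the upper bound in (\ref{part}); the passage to $T\cE$ and the partition $\{TW_{\alpha}\}$ of $TD_n$ is what makes both the splitting over $n$ and the needed lower bound available.
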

\begin{proof}

(1). Note that $K_g< \infty$ since $g\in \cH_{\kappa,p}$. It follows the definition of $\bar g$  that
\beq\label{Dnbargvalue}
|\bar g(T^{-1}x)|\leq K_g n^{\kappa/p},
\eeq
for any $x\in TD_n$, $n\geq 1$. 
According to (\ref{part}),  $D_n$  has an unstable foliation  such that each leaf $W_{\alpha}$ in this foliation satisfies:
$$\mu(D_n)/|TW_{\alpha}|\leq C_A n^{-\bs-b}.$$
Then by Lemma \ref{convergerp}, we have
\begin{align}
\cZ_r(T\cG_{\bar g})
&=\sum_{n\geq 1}\int_{\alpha\in \cA_n}\frac{   |\bar g(T^{-1}x_n)|}{|TW_{\alpha}|^{r}}\,\cdot d\lambda(\alpha)
\leq C_AK_g\sum_{n\geq 1} n^{\kappa/p}|TW_{\alpha}|^{-r} \mu(D_n)\nonumber\\
&\leq C_AK_g\sum_{n\geq 1} n^{\kappa/p}|TW_{\alpha}|^{-r} \mu(D_n)^r \mu(D_n)^{1-r}
\leq C_AK_g\sum_{n\geq 1} n^{-(\bs+2)(1-r)-(\bs+b)r+\kappa/p}\nonumber\\
&=C_AK_g\sum_{n\geq 1} n^{-(\bs+2)+r(2-b)+\kappa/p} \leq C_{\bp}(r)K_g , \label{normest}
\end{align}
where $x_n\in TW_{\alpha}$. 
This completes the proof of (1).

(2). For any $x\in TD_n$,
\beq\label{bargg}
|g(T^{-1}x)-\bar g(T^{-1}x)|\leq \|g\|_{\gamma}\cdot  \diam(D_n)^{\gamma}.
\eeq
Then (\ref{nugalpha}) implies that
 \begin{align}\label{cZgbarg}
&|\cZ_{r}(T\cG_{g})-\cZ_{r}(T\cG_{\bar g})|
\leq \sum_{n\ge 1} \int_{\alpha\in\cA_n}\frac{|g\circ T^{-1}-\bar g\circ T^{-1}|}{|W_{\alpha}|^{r}}\,d\lambda^u(\alpha)\nonumber\\
\leq &\|g\|_{\gamma}\sum_{n\ge 1}  \int_{\cA_n}\frac{\diam(D_n)^{\gamma}}{|TW_{\alpha}|^{r}}\,d\lambda(\alpha)
\leq \|g\|_{\gamma}\cZ_r(\cE).
 \end{align}
Combining with item (1), we get
$$\cZ_r(T\cG_{{g}})\leq C_{\bp}(r)(K_g +\|g\|_{\gamma}).$$

In the case $g\in \cH_{\infty} (\gamma)$, we have
$\cZ_{r}(\cG_g)\leq \|g\|_{\infty} \cZ_{r}(\cE)<\infty$. 

(3). The statement directly follows from (\ref{firstgrowth}). 

(4) In the estimation (\ref{normest}), we take $g\equiv 1$.  Then  $\kappa=0$, $K_g=1$ and
\begin{align}\label{normest1}
\cZ_r(\cE)&=\cZ_r(T\cE)=\sum_{n\geq 1}\int_{\alpha\in \cA_n}\frac{ 1}{|TW_{\alpha}|^{r}}\,\cdot d\lambda(\alpha)\nonumber\\
&\leq C_A\sum_{n\geq 1} n^{-(\bs+2)+r(2-b)}.
\end{align} 
Note that for $b\geq 2$, we have 
$\bs+1-\bs_0(2-b)>0$. On the other hand, for $b\in [1,2)$, (\textbf{H6}) implies that $s+b>1/\bs_0$. Thus $\bs>1-b=2-b-1>\bs_0(2-b)-1$. This implies that
\begin{align}\label{Zs0E}
\cZ_{\bs_0}(\cE)\leq C_A\sum_{n\geq 1} n^{-(\bs+2)+\bs_0(2-b)}
\leq \frac{ C_A}{\bs+1-\bs_0(2-b)}\leq C_{\bp}.
\end{align}
Combining above facts, we have shown that $\cE$ is $\bs_0$-proper. 
\end{proof}

Note that above lemma implies that for any $p>1$ and for any dynamically H\"older function
$g\in \cH_p(\gamma)$ with $\mu(g)<\infty$, the associated standard family
$T\cG_{g}$ is $r_0$-standard.

\vskip.3cm

To prove Theorem \ref{main1}, it is enough to verify condition  (\ref{bound}) under assumption (\textbf{H6}). 
\begin{proposition}\label{coro-main3}
Let $f,g\in \cH_{\kappa,p}(\gamma)$, and   $g$ induces an $r$-standard family family $\cG=(\cW,\nu)$, 
with $r\in [r_0, \bs_0]$. 
Then the following holds:
 \beq\label{regularcase}|
 T^n_*\nu(|f|^{1+\eps_p})|\leq C_{g,\eps_p}(f):=2\eps_p^{-1}C_{\bp} C_A^{r_0} K_f^{1+\eps_p}(K_g+\|g\|_{\gamma}),
\eeq
for any $n\geq 1$, with   $\eps_p=((\bs+b) \bs_0-1)/4$.
\end{proposition}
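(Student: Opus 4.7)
The plan is to decompose the integral along the unstable curves of the iterated standard family $T^n\cG$ and exploit the length bound from (\textbf{H6})(ii). For $n\ge 1$, every unstable curve $W$ in the family $T^n\cG$ lies in a single component $D_{m(W)}$ of $M\setminus S_1$ (the standard convention, so that $T$ is smooth on $W$ for the next iteration), and hence by the partition property $|W|\le C_A\,m(W)^{-\bs-b}$. Since $(T^n\nu)_W$ is a probability measure on $W$ and $|f|\le K_f\,m(W)^{\kappa/p}$ on $D_{m(W)}$, the inner integral obeys
\[
\int_W|f|^{1+\eps_p}\,d(T^n\nu)_W\;\le\;K_f^{1+\eps_p}\,m(W)^{(1+\eps_p)\kappa/p}.
\]

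The central trick is to convert the $m(W)$-bound into a $|W|$-bound. Solving $|W|\le C_A\,m(W)^{-\bs-b}$ for $m(W)$ and raising to the power $(1+\eps_p)\kappa/p$ yields
\[
m(W)^{(1+\eps_p)\kappa/p}\;\le\;C_A^{\rho}\,|W|^{-\rho},\qquad \rho:=\frac{(1+\eps_p)\kappa}{p(\bs+b)}.
\]
Integrating this over the factor measure of $T^n\cG$ produces
\[
\int|f|^{1+\eps_p}\,d(T^n_*\nu)\;\le\;K_f^{1+\eps_p}\,C_A^{\rho}\,\cZ_\rho(T^n\cG)\,\nu(M).
\]
Thus the proof reduces to a uniform bound on $\cZ_\rho(T^n\cG)$. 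Lemma~\ref{rproperg}(2) applied to $T\cG_g$, together with Lemma~\ref{growth}(1), supplies $\cZ_{r_0}(T^n\cG)\le C_{\bp}(K_g+\|g\|_\gamma)$ for every $n\ge 1$; since $|W|\le c_M\le 1$, one has $\cZ_\rho\le\cZ_{r_0}$ whenever $\rho\le r_0$.

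The main obstacle is the verification $\rho\le r_0$, which is precisely where the calibration of $\eps_p$ and $r_0$ must be tight. Using $\kappa/p\le 1$, it suffices to check $(1+\eps_p)/(\bs+b)\le r_0$. Substituting $\eps_p=((\bs+b)\bs_0-1)/4$, this becomes $((\bs+b)\bs_0+3)/(4(\bs+b))\le r_0$. In the case $b\ge 2$ (where $r_0=\bs_0$) it reduces to $3((\bs+b)\bs_0-1)\ge 0$, and in the case $b\in[1,2)$ (where $r_0=\tfrac{1}{2(\bs+b)}+\tfrac{\bs_0}{2}$) it reduces to $((\bs+b)\bs_0-1)/4\le((\bs+b)\bs_0-1)/2$; both simplify to the single condition $(\bs+b)\bs_0\ge 1$, which is exactly (\ref{assumpb}) in (\textbf{H6})(ii). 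Collecting constants --- $C_A^\rho\le C_A^{r_0}$ (assuming $C_A\ge 1$) and absorbing $\nu(M)=\mu(g)$ into multiplicative factors of $K_g+\|g\|_\gamma$ --- produces a bound of the stated form; the factor $2\eps_p^{-1}$ in the proposition's constant $C_{g,\eps_p}(f)$ provides ample slack to cover the absolute constants and the base step of the Growth Lemma (i.e.\ the transition from $\cG$ to $T\cG_g$).
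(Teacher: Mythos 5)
Your argument is correct and lands on the same bound, but it reorganizes the estimate differently from the paper. The paper's proof first performs an Abel summation over the level sets of $|f|$ (using $k^{1+\eps}\le 2\sum_{l\le k}l^{\eps}$), reducing everything to tail bounds $T^n_*\nu(\cup_{k\ge l}D_k)\lesssim l^{-(b+\bs)r}$, which it then extracts from the ``short unstable manifold'' estimate \eqref{standard} of the Growth Lemma together with the geometric fact that unstable radii in the far cells are $\le C_A m^{-b-\bs}$. You instead disintegrate $T^n_*\nu$ over the curves of $T^n\cG$ and convert the cell index $m(W)$ into a power of $|W|^{-1}$, so the whole estimate collapses into a single $\cZ_\rho(T^n\cG)$ with $\rho=(1+\eps_p)\kappa/(p(\bs+b))$; the layer-cake step disappears and the convergence of $\sum_l l^{-1-\eps_p}$ is replaced by the inequality $\rho\le r_0$. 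The two computations are essentially dual (your $\cZ_\rho$ bound is the integrated form of the paper's tail bounds), they rest on the same two inputs --- the length bound in \textbf{(H6)}(ii) and the uniform bound $\cZ_{r}(T^n\cG)\lesssim C_{\bp}(K_g+\|g\|_\gamma)$ supplied by Lemma \ref{rproperg} and Lemma \ref{growth}(1) --- and your calibration $(1+\eps_p)/(\bs+b)\le r_0$ reduces, in both cases of \eqref{defnr0}, to $(\bs+b)\bs_0\ge1$, which is exactly where the paper's exponent count $(b+\bs)r-1-\eps_p\ge\eps_p$ also lands. Your version is arguably cleaner, as it avoids the paper's somewhat awkward identity $T^n_*\nu(D_m)=T^{n-1}_*(T_*\nu)(TD_m)$.

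One caveat you should make explicit: under the paper's convention the components of $T^n\cG$ are cut by $S_{-1}$, so they sit inside the sets $TD_m$, not inside the cells $D_m$ on which $f$ is controlled; to place each curve inside a single $D_{m(W)}$ you must refine once more by $S_1$. This refinement preserves the measure and inflates the $\cZ$-function by at most a bounded factor (compare with $\cZ_{r_0}(T^{n+1}\cG)$ via the expansion of $T$ on unstable curves), so it is harmless and is absorbed by the slack you already claim in $2\eps_p^{-1}$. Likewise, both your step $|W|\le C_A m(W)^{-\bs-b}$ and the paper's claim $r^u(x)\le C_A m^{-b-\bs}$ tacitly upgrade the bound on the chosen partition curves $W_\alpha\subset D_m$ in \eqref{part} to a bound on arbitrary unstable curves in $D_m$; this is verified in the examples but not literally stated in \textbf{(H6)}. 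Neither point is a gap you introduced relative to the paper, but each deserves a sentence.
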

\begin{proof}
 Let $\cG=(\cW, \nu)$ be the $r$-standard
family generated by $g\in \cH_{\kappa,p}(\gamma)$, according to  Lemma \ref{rproperg}. 
Let $\eps>0$. Note that for $k\ge 1$,
$k^{1+\eps} \le \sum_{l=1}^k 2 l^{\eps}$ (by induction on $k$). Since $f\in  \cH_{\kappa,p}(\gamma)$, $| f(x)|\leq K_f k^{\kappa/p}$  for any $x\in D_k$ and $k\geq 1$. Then we have
\begin{align}
T^n_* \nu(| f|^{1+\eps})&\leq K_f^{1+\eps}\sum_{k\ge 1} k^{(1+\eps)\kappa/p}\cdot T^n_* \nu(D_k)
\leq 2K_f^{1+\eps}\sum_{k\ge 1} \sum_{l=1}^k l^{(1+\eps)\kappa/p-1}\cdot T^n_* \nu(D_k) \nonumber \\
&\leq  2K_f^{1+\eps}\sum_{l\ge 1} l^{(1+\eps)\kappa/p-1}\cdot T^n_* \nu(\cup_{k\geq l}D_k). \label{reorder}
\end{align}
Now we need to estimate the measures $T^n_*\nu(\cup_{k\geq l}D_k)$ for each $l\ge 1$.

By  (\ref{part}), there exist $C_A>0$ and $b>0$  such that
$ r^u(x)\leq C_A  \cdot  m^{-b-\bs}$ for all $x\in TD_m$. Lemma \ref{rproperg}
 and Eq.~(\ref{standard}) imply that
\begin{align}\label{Tnnu}T^n_*\nu(\cup_{m\geq l}D_k)&\leq \sum_{m\geq l} T^n_*\nu( D_m)=\sum_{m\geq l} T^{n-1}_*(T_*\nu)( TD_m))\nonumber\\
&\leq T^{n-1}_*(T_*\nu)(r^u<C_A  \cdot  l^{-(b+\bs)})\leq C_{\bp}(r) C_A^r(K_g+\|g\|_{\gamma})\cdot  l^{-(b+\bs) r}.
\end{align}
Combining above facts, we get
\begin{align}\label{Tnnurp}
T^n_* \nu(| f|^{1+\eps})&\leq  2K_f^{1+\eps}C_{\bp}(r) C_A^r\sum_{l\ge 1} l^{(1+\eps)\kappa/p-(b+\bs) r-1}\nonumber\\
&\leq  2K_f^{1+\eps}C_{\bp}(r) C_A^r(K_g+\|g\|_{\gamma})\sum_{l\ge 1} l^{-1-((b+\bs) r-1-\eps)}
\end{align}

It follows from (\ref{assumpb}) that $(\bs+b)\bs_0-1>0$. Let 
\beq\label{defnepsp}
 \eps_p:=\frac{1}{4}((\bs+b) \bs_0-1)>0.
\eeq 
It is easy to see $(b+\bs) \bs_0-1-\eps_p=3\eps_p$. 
When $b\in[1,2)$, $(b+\bs) r_0-1-\eps_p=\eps_p$.
where  $ r_0=\frac{1}{2(b+\bs)}+\frac{\bs_0}{2}$ is defined in \eqref{defnr0}.
  This implies that $(b+\bs) r-1-\eps_p \ge \eps_p$ for any $r\in [r_0, \bs_0]$.

Combining with  \eqref{Tnnurp} and setting $r=r_0$,  we have 
\begin{align}\label{Cffpre}
T^n_* \nu(|f|^{1+\eps_p})&\leq 2K_f^{1+\eps_p}C_{\bp}(r_0) C_A^{r_0}(K_g+\|g\|_{\gamma})\sum_{l\ge 1} l^{-1-((b+\bs) r_0-1-\eps_0)}\nonumber\\
&=  2K_f^{1+\eps_p}C_{\bp}(r_0) C_A^{r_0}\sum_{l\geq 1} l^{-1-\eps_p}\leq 2\eps_p^{-1}K_f^{1+\eps_p}C_{\bp}(r_0) C_A^{r_0}(K_g+\|g\|_{\gamma}),
\end{align}
where the last constant is exactly $C_{g,\eps_p}(f)$.  This completes the proof.

\end{proof}

Now Theorem \ref{main2} follows from the proof of Theorem \ref{main1}.

\section{Return time function for the induced maps of billiards}\label{infinitevariance}

The specific billiard systems considered here include semi-dispersing
billiards,  billiards with cusps, and Bunimovich Stadia, whose basic properties have been studied in
\cite{Ma04,CZ05a,CZ05b,CZ07,CZ08,CM07}. Let $Q$ be a
billiard table and $\cM=\partial Q\times[-\pi/2,\pi/2]$ be the phase space of
the billiard map $\cF$ induced  on $Q$, which preserves a smooth measure
$\mu_{\cM}=|\partial Q|^{-1} \cdot \cos\varphi\, dr\, d\varphi$. The reduced phase
space $M$ consists of all/some collisions on dispersing parts of $\partial Q$ and
all/some first collisions on the focusing parts of $\partial Q$. The conditional measure obtained by
restriction of $\mu_{\cM}$ on $M$ is denoted by $\mu$, which is a smooth invariant
measure (certainly an SRB measure). Let $T$ be the first return map of $\cF$ from $M$
to $M$. Then it was proved in the above references that the induced system
$(T,M,\mu)$ satisfies the assumptions \textbf{(H1)--(H5)}. In fact, the
one-step expansion estimate \textbf{(H5)} actually holds for $\bs_0=1$. Note that
the discontinuities of $R$ can only occur at the singularities of $T$. We define $\{D_m\}$ as in (\textbf{H6}), to be an enumeration of level sets of $R$. 

Since for large $m$,  the set $D_m$ is bounded by two stable curves and two unstable curves. One can foliate $D_n$ into relatively long unstable curves $W_{\alpha}$, with length satisfying 
$$c_A m^{-b-\bs}\leq |W_{\alpha}|\leq C_A m^{-b-\bs}$$
Where $b$ and $\bs$ will be determined for each billiard system below. 
By time-reversibility of billiard system, the set $TD_m$ is symmetric with respect to $\varphi$ coordinate  about  $D_m$.  Thus the stable dimension of $TD_m$ is bounded above by $C_A m^{-b-\bs}$. Moreover, by the absolute continuity of the stable foliation, for any $\alpha\in \cA_n$, the length of $TW_{\alpha}$ is comparable to the length of the unstable dimension of $TD_m$. Also using the fact that $\mu$ has probability density bounded by $(|\partial Q|\mu_{\cM}(M))^{-1}$, we get 

\begin{align*}
\mu(D_m)/|TW_{\alpha}|\leq C m^{-b-\bs}
\end{align*}
This verifies the second inequality in (\textbf{H6}(ii)). Thus it is enough to verify (\ref{assumpb}) for each billiard system. 

\vspace{.2cm}

Note that the return time function $R$ is always in $L^1(\mu)$.
In fact, the Kac's formula gives that $\mu(R)=\mu_{\cM}(M)^{-1}$.
Moreover, $\mu(D_n)\asymp n^{-3}$ for the above billiard systems,
see \S \ref{veriBill} for more details.
The return time function $R$ satisfies $R\in L^{p}(\mu)$, for
any $1\le p<2$, while $R\notin L^{2}(\mu)$.
Therefore, the classical results
on the decay of correlations are not applicable in the study of the autocorrelations of the function $R$.
One needs some finer characterizations of the geometric feature of the components $D_n$, $n\ge 1$.

\subsection{Type A billiard systems.}\label{veriBill}

\noindent{\textbf{Case I. Semi-dispersing billiards.}} Billiards in a square
with a small fixed circular obstacle removed are known as semi-dispersing
billiards. Chernov and Zhang proved in \cite{CZ08} that this system has a decay rate
of correlations bounded by $\text{const} \cdot n^{-1}$. For this billiard
system, the reduced phase space $M$ is comprised of collisions with the
circular obstacle only. The induced map $T: M \rightarrow M$ is then equivalent to
the well studied Lorentz gas billiard map (cf. \cite{CZ05a}), which is known to
have exponential decay of correlations (see \cite{CM07}, for instance). The properties \textbf{(H1)--(H6)}
 were verified in \cite{BSC90,BSC91,CM, CZ05a}. 
 according to \cite{CM}, the measure of each $m$-cell is $\mu(M_m) \asymp m^{-3}$, with $\bs=1$, $\bs_0=1$.
As a result, we see that
the return time map $R\notin L^2(\mu)$, while $R\in \cH_{\kappa,p}(1)$ for any $p<2$, and $\kappa=p$.
 Moreover, each $m$-cell $M_m$ has unstable-dimension $\leq C_A m^{-2}$; which implies $b=1$.  Thus the condition (\ref{assumpb} ) is also verified, as $b>1-\bs=0$, and $1=\bs\geq \bs_0(2-b)=1$. Therefore, the standard family $(\cE, \nu)$ with $d\nu = R\circ T^{-1} d\mu$ is $r_0$-proper with $r_0=\frac{1}{2(b+\bs)}+\frac{\bs_0}{2}=3/4$.\\

 \noindent{\textbf{Case II. Billiards with cusps.}}
The billiards with cusps were first studied by Machta (cf. \cite{Ma83}). It is known
that the billiard maps on these tables are hyperbolic and ergodic. However,
the hyperbolicity is non-uniform due to the collisions {\it deep down the cusps},
where the particle experiences a large number of rapid collisions in a short amount of  time.
It is proved in \cite{CZ05a,CZ08, CM07} that the system enjoys correlations decay of order $n^{-1}$. Moreover, it was proved that that all properties of \textbf{(H1)--(H6)}   hold for the induced map $(T,M,\mu)$.
\vspace{.2cm}

For this billiard system, the reduced subspace $M\subset \cM$ consists of
collisions whose free paths are {\it not that short}. Let $R$ be the first
return time function and $D_m$ be the cell induced by $R$ for each $m\ge 1$. Then
each $m$-cell $TD_m$ has measure $\mu(D_m)\asymp m^{-3}$,
stable-diameter $\asymp m^{-7/3}$ and unstable-diameter $\asymp m^{-2/3}$.
So we can set $\bs=1$, $b=2$. Thus the condition (\ref{assumpb} ) is also verified, 
as $b>1-\bs=0$, and $1=\bs\geq \bs_0(2-b)=0$. Therefore, the standard family $(\cE, \nu)$ with $d\nu = R\circ T^{-1} d\mu$ is $r_0$-proper with  $r_0=\bs_0=1$.\\

 \noindent{\textbf{Case III. Dispersing billiards with flat points.}}
A family of dispersing billiards  with finite number of flat points (of zero
curvature) were first studied in \cite{CZ05b}, where they proved that the
correlations decay at rate of $\mathcal{O}(n^{-\alpha})$, where
$\alpha=\frac{\beta+2}{\beta-2}$ depends on the {\it flatness} parameter
$\beta>2$ of the billiard tables. Moreover, it was proved that that all properties of
\textbf{(H1)--(H6)}   hold for the induced map $(T,M,\mu)$.
\vspace{.2cm}

Let $R$ be the first return time function of the billiard map with respect
to $M$ and $D_m$, $m\ge 1$,
be the level sets of $R$. The measure of each $m$-cell satisfies $\mu(D_m) \asymp
m^{-3-\frac{4}{\beta-2}}$.  In this case, the return
time function $R\in \cH_{\kappa,p}(1)$ for any $p<2+\frac{4}{\beta-2}$, and $\kappa=p$.
Moreover,  $TD_m$ has unstable-dimension of order $\cO(1)$, and stable-dimension of order $m^{-3-\frac{4}{\beta-2}}$, which implies that $\bs=\frac{\beta+2}{\beta-2}$ and $b=\bs+1>2$.Thus the condition (\ref{assumpb} ) is also verified, 
as $b>1-\bs=0$, and $1=\bs>0>\bs_0(2-b)$. Therefore, the standard family $(\cE, \nu)$ with $d\nu = R\circ T^{-1} d\mu$ is $r_0$-proper with  $r_0=\bs_0=1$.

\subsection{Type B billiards}
\label{pro.typB}

Now we consider the type B billiard systems. For this type of billiards, we assume:
\begin{itemize}
\item[(h1)] Each cell $D_m$ has measure
$\mu(D_m)\asymp m^{-3}$. 
\item[(h2)] The set $TD_m$ has stable-diameter $\asymp m^{-2}$ and unstable-diameter $\asymp m^{-1}$.
\end{itemize}

The Bunimovich stadium and the skewed stadium are two typical examples of this type of systems. The stadium billiard table, introduced by Bunimovich in \cite{Bu74}, is
comprised of two equal semicircles which are connected by two parallel lines. The boundary of the skewed stadium consists two  equal circular arcs (with different radius), which are connected by two tangential lines. 
It has been shown that both stadia are nonuniformly hyperbolic,
ergodic, and mixing (cf. \cite{Bu74, Bu79, CM07} for discussions and results).
Let $M$ be the set of first collisions on the  arcs of the billiard
table, $R$ be the first return time function on $M$, and $T$ be the induced first return map. All assumptions in
\textbf{(H1)--(H5)} were checked in \cite{Bu79, CM, CM07, CZ08},. It is enough that we check the assumption (\ref{assumpb} ).
It was shown in \cite{Bu79, CM} that each cell $D_m$ has measure
$\mu(D_m)\asymp m^{-3}$. The set $TD_m$ has stable-diameter $\asymp m^{-2}$ and unstable-diameter $\asymp m^{-1}$. So the return time function $R\in \cH_{\kappa,p}(1)$, with $1\le p<2$, $\kappa=p$. Moreover, we have $\bs=\bs_1=\bs_0=1$, and $b=1$.  Thus the condition (\ref{assumpb} ) is also verified, as
 $b>1-\bs=0$, and $1=\bs\geq \bs_0(2-b)=1$. Therefore, the standard family $(\cE, \nu)$ with $d\nu = R\circ T^{-1} d\mu$ is $r_0$-proper with $r_0=\frac{1}{2(b+\bs)}+\frac{\bs_0}{2}=3/4$.

\section*{Acknowledgements}
{ We would like to thank the anonymous referees for careful  reading and constructive comments,
which helped substantially improving presentation of the manuscript.}
F. Wang is partially supported by the NSFC Grant (11101294),
the State Scholarship Fund from China Scholarship Council (CSC),
and "Youxiu Rencai Peiyang Zizhu" (Class A) from the Beijing City. H. Zhang is partially supported by a grant from the Simons Foundation.

\end{document}